\DeclareFontFamily{OMX}{MnSymbolE}{}
\DeclareSymbolFont{MnLargeSymbols}{OMX}{MnSymbolE}{m}{n}
\DeclareFontShape{OMX}{MnSymbolE}{m}{n}{
    <-6>  MnSymbolE5
   <6-7>  MnSymbolE6
   <7-8>  MnSymbolE7
   <8-9>  MnSymbolE8
   <9-10> MnSymbolE9
  <10-12> MnSymbolE10
  <12->   MnSymbolE12
}{}
\DeclareFontShape{OMX}{MnSymbolE}{b}{n}{
    <-6>  MnSymbolE-Bold5
   <6-7>  MnSymbolE-Bold6
   <7-8>  MnSymbolE-Bold7
   <8-9>  MnSymbolE-Bold8
   <9-10> MnSymbolE-Bold9
  <10-12> MnSymbolE-Bold10
  <12->   MnSymbolE-Bold12
}{}
\let\llangle\@undefined
\let\rrangle\@undefined
\DeclareMathDelimiter{\llangle}{\mathopen}%
                     {MnLargeSymbols}{'164}{MnLargeSymbols}{'164}
\DeclareMathDelimiter{\rrangle}{\mathclose}%
                     {MnLargeSymbols}{'171}{MnLargeSymbols}{'171}
\newcommand{\sums}[3]{\sum_{#1 = #2}^{#3}}
\newcommand{\R}{\mathbb{R}}
\newcommand{\N}{\mathbb{N}}
\newcommand{\avgL}{\textit{\L}}
\newcommand{\Z}{\mathbb{Z}}
\newcommand{\intD}{\,\mathrm{d}}
\newcommand{\setcolon}{\,\colon\,}
\newcommand\numberthis{\addtocounter{equation}{1}\tag{\theequation}}
\theoremstyle{plain}
\newtheorem{thm}{Theorem}[section]
\newtheorem{lem}[thm]{Lemma}
\newtheorem{prop}[thm]{Proposition}
\newtheorem{cor}[thm]{Corollary}
\theoremstyle{definition}
\newtheorem{dfntn}[thm]{Definition}
\theoremstyle{remark}
\newtheorem{rmrk}[thm]{Remark}
\address{Department of Mathematics and Statistics, P.O.B.~68 (Pietari Kalmin katu~5), FI-00014 University of Helsinki, Finland}
\email{aapo.laukkarinen@helsinki.fi}
\subjclass[2020]{42B20, 46E40}
\keywords{Sparse domination, convex body domination, multi-scale operator, matrix weight, commutator}
\begin{document}
\title[Convex body domination for multi-scale operators]{Convex body domination for a class of multi-scale operators} 

\begin{abstract}
    The technique of sparse domination, i.e., dominating operators with sums of averages taken over sparsely distributed cubes, has seen rapid development recently within the realms of harmonic analysis.
    A useful extension of sparse domination called convex body domination allows one to estimate operators in matrix-weighted spaces. In this paper, we extend recent sparse domination results for a class of multi-scale operators due to Beltran, Roos and Seeger to the convex body setting and prove that this implies quantitative matrix-weighted norm bounds for these operators and their commutators.
\end{abstract}

\author{Aapo Laukkarinen}
\maketitle
 
\setcounter{tocdepth}{1}
\begingroup\hypersetup{linkcolor=black}
\tableofcontents
\endgroup

\section{Introduction}
A recent technique in harmonic analysis called sparse domination allows one to dominate many complicated operators by sums of averages taken over cubes that have major disjoint subsets. This type of domination, initiated by Lerner \cite{lerner_estimate_2013,lerner_simple_2013}, is primarily motivated by the sharp quantitative scalar-valued weighted norm inequalities that arise from it. In the vector case sparse domination walks into the problem that ordinary averages lose information about direction, which is often essential when considering matrix-valued weights. This problem can be addressed by replacing the averages with convex sets that naturally enjoy the property of containing information about behaviour in different directions. 

This technique of dominating via convex sets is called convex body domination and it was conjured by Nazarov et al. in \cite{nazarov_convex_2017} in order to extend the sharp weighted $A_2$ bound for Calderón-Zygmund operators (CZO) by Hytönen \cite{hytonen_sharp_2012} to the vector valued setting, where the weight functions are matrix-valued. While the conjectured linear bound on the dependence on the weight was not achieved, the method in \cite{nazarov_convex_2017} yielded a result that, at the time of writing, holds the record for most optimal. Consequent convex body domination results include two weight matrix $A_p$ bounds for CZOs by Cruz-Uribe et al. \cite{cruz-uribe_ofs_two_2018}, matrix weighted bounds for variational CZOs by Duong et al. \cite{duong_variation_2021}, for commutators of CZOs by Isralowitz et al. \cite{isralowitz_sharp_2021,isralowitz_commutators_2022} and for rough singular integral operators by Di Plinio et al. \cite{di_plinio_sparse_2021} and Muller and Rivera-Ríos \cite{muller_quantitative_2022}.  

In \cite{hytonen_remarks_2023} Hytönen generalized the notion of convex bodies to an arbitrary normed space $X$. As an application of this general setting one is able to perform convex body domination on the Bochner space $X=L^p(\R^n,E)$, where $E$ is an arbitrary Banach space. In this paper we will use this general framework to extend the sparse domination results of Beltran et al. \cite{beltran_multi-scale_2020} to the convex body setting. In particular, we consider a sum of operators
\[
    T=\sum_{j=N_1}^{N_2} T_j,
\]
where certain $2^j$-scalings of the operators $T_j$ enjoy uniform $L^p\to L^q$ boundedness and they are supported in a $1$-neighbourhood of the input function. The individual rescaled operators and their adjoints are also assumed to satisfy a certain regularity condition. Furthermore, the full sum enjoys uniform $L^p\to L^{p,\infty}$ and $L^{q,1}\to L^q$ bounds. Our main result is that a bilinear form related to $T$ permits convex body domination. For the exact statement see Theorem \ref{mainresult} and the preceding discussion in Section 3. We will also explore the properties of general operators that satisfy convex body domination. In particular, we prove new quantitative matrix-weighted bounds for these operators and their commutators.

The outline of the paper is as follows. In Section 2, we study the general framework for convex bodies from \cite{hytonen_remarks_2023} and equip ourselves with results that provide the foundation of our argument. In Section 3, we introduce the setting for multi-scale operators from \cite{beltran_multi-scale_2020} that is also the setting for our main result. This includes the dense subspace that we will work with in the following two sections. In Section 4, we begin the induction proof of the main result by proving the base case for the induction. In other words, we restrict ourselves to the single-scale setting and prove that the single-scale operators satisfy convex body domination. The induction step is provided in Section 5. In Section 6, we study matrix-weighted norm inequalities that arise from convex body domination. In particular, we will show that if $T$ satisfies convex body domination, then for $1\leq p<r<q\leq\infty$, $t=\frac{r}{p}$ and $s=\left(\frac{q}{r}\right)'$ we have
\[
    \|T\vec f\|_{L^r_{B_2^n}(W)}\leq C [W]_{A_t}^{\frac{t'}{r}+\frac{s}{r'}}[W]_{RH_{t,s}}^{\frac{1}{r}+\frac{s}{r'}} \|\vec f\|_{L^r_{B_1^n}(W)},
\]
where $[W]_{A_t}$ is the usual matrix-$A_t$ constant of $W$ and $[W]_{RH_{t,s}}$ is a natural extension of the reverse Hölder constant for matrix weights (see Definition \ref{RHtsclass}). 
Another application is explored in Section 7, where we study the commutators of operators that satisfy convex body domination. More explicitly, we will show that if $T$ satisfies convex body domination, then for $1\leq p<r<q\leq\infty$, $t=\frac{r}{p}$ and $s=\left(\frac{q}{r}\right)'$ we have
\begin{align*}
    \|[B,T]\vec f&\|_{L^r_{B_2^n}(W)}\leq\\& C\|B\|_{BMO}\left([W]_{A_t}^s[W]_{RH_{t,s}}^{s}+[W]_{A_t}^{\frac{1}{t-1}}\right)[W]_{A_t}^{\frac{t'}{r}+\frac{s}{r'}}[W]_{RH_{t,s}}^{\frac{1}{r}+\frac{s}{r'}}\|\vec f\|_{L^r_{B_1^n}(W)}.
\end{align*}
In Section 8, we prove that our results in the dense subspace extend to the global setting. Finally, in Section 9, following \cite{beltran_multi-scale_2020}, we give some examples of Fourier multiplier operators that enjoy a multi-scale structure and therefore satisfy convex body domination.

\section*{Acknowledgements}
The author would like to thank Tuomas Hytönen for supervision and insightful discussions that made this article possible. The author was supported by the Research Council of Finland through grants 346 314 (to Tuomas Hytönen) and 336 323 (to Timo Hänninen).

\section{General convex bodies}
We follow the framework from \cite{hytonen_remarks_2023}. Let  $\Bar{B}_X$ be the closed unit ball of a space $X$.
\begin{dfntn}
    For a real normed space $X$ and $\Vec{x}\coloneqq(x_i)_{i=1}^n\in X^n$, the convex body $\llangle\Vec{x}\rrangle_X$ of $\Vec{x}$ is 
    \[\llangle\Vec{x}\rrangle_X\coloneqq \{\langle\Vec{x},x^*\rangle\setcolon x^*\in \Bar{B}_{X^*} \}\subset\R^n,\]
    where $\langle\Vec{x},x^*\rangle=(\langle x_i,x^*\rangle)_{i=1}^n$.
\end{dfntn}
\noindent As the name suggests, the set $\llangle x \rrangle_X$ is convex, symmetric and compact (see Lemma 2.3 from \cite{hytonen_remarks_2023}). 
The Minkovski dot product of two sets $A,B\subset\R^n$ is defined by
\[A\cdot B\coloneqq \{a\cdot b\setcolon a\in A,b\in B\}.\]
In particular,
\begin{align*}
\llangle\Vec{x}\rrangle_X\cdot\llangle\Vec{y}\rrangle_Y &= \{\langle \Vec{x},x^*\rangle\cdot \langle \Vec{y},y^*\rangle\setcolon x^*\in \Bar{B}_{X^*}, y^*\in \Bar{B}_{Y^*}\}\\
&= \left\{\sums{i}{1}{n}\langle x_i,x^*\rangle \langle y_i,y^*\rangle\setcolon x^*\in \Bar{B}_{X^*}, y^*\in \Bar{B}_{Y^*}\right\}.
\end{align*}
Since the sets $\llangle\Vec{x}\rrangle_X$ and $\llangle\Vec{y}\rrangle_Y$ are convex bodies, the above set is a closed interval $[-c,c]$ and we can identify it with its right end point $c\in\R$. In particular,  for $r,s\in\R$ the inequalities $r\leq \llangle\Vec{x}\rrangle_X\cdot\llangle\Vec{y}\rrangle_Y\leq s$ mean $r\leq c\leq s$.

We extend the bilinear form $t\setcolon X\times Y\to\R$ to $X^n\times Y^n$ by 
\[t(\Vec{x},\Vec{y})=\sums{i}{1}{n}t(\Vec{x}\cdot\Vec{e}_i,\Vec{y}\cdot\Vec{e}_i),\numberthis\label{extendbiliform}\]
where $(\Vec{e}_i)_{i=1}^n$ is a fixed orthonormal basis of $\R^n$. In \cite{hytonen_remarks_2023} this definition is shown to be independent of the chosen orthonormal basis and for $\vec x\in X^n$ and $\vec y\in Y^n$ it holds that
\begin{align*}
    t(A\Vec{x},\vec y)=t(\Vec{x},A^\top \vec y).\numberthis\label{matrixswapbili}
\end{align*}

We will restate two results from \cite{hytonen_remarks_2023} that will be used later in this text.
\begin{lem}\label{TH1}
    Let $X,Y$ be normed spaces and $\Vec{f}\in X^n, \Vec{g}\in Y^n$. Let $\mathcal{E}_f$ be the John ellipsoid of $\llangle \Vec{f}\rrangle_X$ such that 
    \[
        \mathcal{E}_f\subset \llangle \Vec{f}\rrangle_X\subset \sqrt{n}\mathcal{E}_f,
    \]
    and suppose that $\mathcal{E}_f$ is non-degenerate (i.e. of full dimension). Let $R_f$ be the linear transformation such that $R_f\mathcal{E}_f=\Bar{B}_{\R^n}$ and let $(\Vec{e}_i)_{i=1}^n$be an orthonormal basis of $\R^n$. If 
    \[f_i\coloneqq R_f\Vec{f}\cdot\Vec{e}_i,\quad g_i\coloneqq R_f^{-\top}\Vec{g}\cdot\Vec{e}_i,\quad i=1,\dots,n,\]
    then
    \[
        \sums{i}{1}{n}\|f_i\|_X\|g_i\|_Y\leq n^\frac{3}{2}\llangle \Vec{f}\rrangle_X\cdot\llangle \Vec{g}\rrangle_Y.
    \]
\end{lem}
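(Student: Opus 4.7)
The plan is to reduce, via the John ellipsoid transformation, to a situation where the convex body $\llangle\vec{f}\rrangle_X$ is trapped between the Euclidean unit ball $\bar B_{\R^n}$ and $\sqrt n\,\bar B_{\R^n}$. The starting observation is that the Minkowski dot product is invariant under the adjoint pair $(R_f,R_f^{-\top})$: since $R_f\llangle\vec{f}\rrangle_X=\llangle R_f\vec{f}\rrangle_X$ by linearity of the pairing, and $(AK)\cdot L=K\cdot(A^\top L)$ for any invertible linear $A:\R^n\to\R^n$ and convex sets $K,L\subset\R^n$, one finds $\llangle R_f\vec{f}\rrangle_X\cdot\llangle R_f^{-\top}\vec{g}\rrangle_Y=\llangle\vec{f}\rrangle_X\cdot\llangle\vec{g}\rrangle_Y$. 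Writing $\vec F:=R_f\vec{f}$ and $\vec G:=R_f^{-\top}\vec{g}$, so that $f_i=\vec F\cdot\vec e_i$ and $g_i=\vec G\cdot\vec e_i$, it therefore suffices to prove
\[
\sums{i}{1}{n}\|f_i\|_X\|g_i\|_Y\le n^{3/2}\,\llangle\vec F\rrangle_X\cdot\llangle\vec G\rrangle_Y,
\]
and by the hypothesis applied to $R_f$ we have $\bar B_{\R^n}\subset\llangle\vec F\rrangle_X\subset\sqrt n\,\bar B_{\R^n}$.

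I would then extract two facts from this two-sided inclusion. The upper inclusion yields the coordinate bound $\|f_i\|_X\le\sqrt n$: for every $x^*\in\bar B_{X^*}$ the vector $\langle\vec F,x^*\rangle=(\langle f_j,x^*\rangle)_{j=1}^n$ lies in $\sqrt n\,\bar B_{\R^n}$, so in particular $|\langle f_i,x^*\rangle|\le\sqrt n$ for each $i$. The lower inclusion, which is the heart of the argument, says that every standard basis vector $\vec e_i$ belongs to $\llangle\vec F\rrangle_X$, so there exists $x_i^*\in\bar B_{X^*}$ with $\langle f_j,x_i^*\rangle=\delta_{ij}$ for all $j$. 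Choosing, for each $i$ and arbitrary $\varepsilon>0$, a near-norming functional $y_i^*\in\bar B_{Y^*}$ with $\langle g_i,y_i^*\rangle\ge(1-\varepsilon)\|g_i\|_Y$, the identity $\langle f_j,x_i^*\rangle=\delta_{ij}$ collapses to
\[
(1-\varepsilon)\sums{i}{1}{n}\|g_i\|_Y\le\sums{i}{1}{n}\langle g_i,y_i^*\rangle=\sums{i}{1}{n}\sums{j}{1}{n}\langle f_j,x_i^*\rangle\langle g_j,y_i^*\rangle=\sums{i}{1}{n}\langle\vec F,x_i^*\rangle\cdot\langle\vec G,y_i^*\rangle.
\]
Each summand on the right is a point of $\llangle\vec F\rrangle_X\cdot\llangle\vec G\rrangle_Y$, so the total is at most $n\,\llangle\vec F\rrangle_X\cdot\llangle\vec G\rrangle_Y$; letting $\varepsilon\downarrow 0$ gives $\sum_i\|g_i\|_Y\le n\,\llangle\vec F\rrangle_X\cdot\llangle\vec G\rrangle_Y$. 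Combined with $\|f_i\|_X\le\sqrt n$, the desired bound $n^{3/2}\llangle\vec f\rrangle_X\cdot\llangle\vec g\rrangle_Y$ follows.

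The crux of the argument is the lower John inclusion $\bar B_{\R^n}\subset\llangle\vec F\rrangle_X$, since this is what produces the biorthogonal system $(x_i^*)$ dual to the coordinates $(f_j)$; this is also the only place deeper than mere normings and linear algebra, and the $\sqrt n$ constant from John's theorem is what shows up in the final exponent. Everything else is essentially bookkeeping, the only minor subtlety being the $\varepsilon$-correction needed when $Y$ is non-reflexive so that norming functionals are not attained.
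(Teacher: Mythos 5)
Your proof is correct. The paper itself does not prove this lemma but restates it from Hytönen's \cite{hytonen_remarks_2023}; your argument is the natural one and, as far as I can tell, matches the one there in spirit: reduce via the pair $(R_f,R_f^{-\top})$ to John position where $\bar B_{\R^n}\subset\llangle\vec F\rrangle_X\subset\sqrt n\,\bar B_{\R^n}$, extract $\|f_i\|_X\le\sqrt n$ from the outer inclusion, extract the biorthogonal system $(x_i^*)$ from the inner inclusion, and then collapse the double sum using $\langle f_j,x_i^*\rangle=\delta_{ij}$ together with $\varepsilon$-norming functionals for the $g_i$. All the intermediate identities you use check out, including $R_f\llangle\vec f\rrangle_X=\llangle R_f\vec f\rrangle_X$ and $(AK)\cdot L=K\cdot(A^\top L)$, which give the invariance $\llangle\vec F\rrangle_X\cdot\llangle\vec G\rrangle_Y=\llangle\vec f\rrangle_X\cdot\llangle\vec g\rrangle_Y$; the $\varepsilon$-correction for non-attained norms is the right level of care; and the final bookkeeping $\sqrt n\cdot n=n^{3/2}$ gives exactly the stated constant.
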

\begin{prop}\label{TH2}
Let $X,Y$ be normed spaces with subspaces $F\subset X$ and $G\subset Y$, and let $t\,\colon X\times Y\to \R$ be a bilinear form. Consider the following conditions 
\begin{enumerate}
    \item[$(1)$] For all $(f,g)\in F\times G$, we have
    \[|t(f,g)|\leq C\|f\|_X\|g\|_Y.\]
    \item[$(2)$] For all $(f,g)\in F\times G$, we have
    \[|t(\Vec{f},\Vec{g})|\leq C_n\llangle f\rrangle_X\cdot
\llangle g\rrangle_Y.\]
\end{enumerate}
For each $n\in\Z_+$, condition $(1)$ implies condition $(2)$ with $C_n=Cn^\frac{3}{2}$.
\end{prop}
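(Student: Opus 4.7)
The plan is to combine the algebraic identity \eqref{matrixswapbili} with the geometric content of Lemma~\ref{TH1}, thereby reducing the vector inequality (2) to $n$ instances of the scalar hypothesis (1). Given $\vec f\in F^n$ and $\vec g\in G^n$, I would first dispatch the main case in which the John ellipsoid of $\llangle\vec f\rrangle_X$ is non-degenerate, and then reduce the degenerate case to the non-degenerate one.

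In the non-degenerate case, Lemma~\ref{TH1} supplies the linear transformation $R_f$ and the elements $f_i,g_i$ adapted to the orthonormal basis $(\vec e_i)_{i=1}^n$. Since each $f_i$ (resp.\ $g_i$) is a linear combination of the entries of $\vec f$ (resp.\ $\vec g$) and the subspaces $F,G$ are linear, we have $f_i\in F$ and $g_i\in G$. Applying \eqref{matrixswapbili} with $A=R_f^{-1}$ yields
\[
    t(\vec f,\vec g)=t\bigl(R_f^{-1}(R_f\vec f),\vec g\bigr)=t(R_f\vec f,R_f^{-\top}\vec g),
\]
and a direct expansion using the definition \eqref{extendbiliform} then gives $t(\vec f,\vec g)=\sum_{i=1}^n t(f_i,g_i)$. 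Applying the scalar hypothesis (1) to each summand and closing with Lemma~\ref{TH1} produces
\[
    |t(\vec f,\vec g)|\leq C\sum_{i=1}^n\|f_i\|_X\|g_i\|_Y\leq Cn^{3/2}\,\llangle\vec f\rrangle_X\cdot\llangle\vec g\rrangle_Y,
\]
which is exactly (2) with $C_n=Cn^{3/2}$.

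The only point requiring extra care is the degenerate case where $\llangle\vec f\rrangle_X$ lies in a proper subspace $V\subset\R^n$ of dimension $k<n$. Here I would choose an orthonormal basis $(\vec u_i)_{i=1}^n$ with $\vec u_1,\dots,\vec u_k$ spanning $V$ and use the basis-independence of \eqref{extendbiliform} to rewrite the bilinear form in terms of $\tilde f_i\coloneqq\vec f\cdot\vec u_i$ and $\tilde g_i\coloneqq\vec g\cdot\vec u_i$. For $i>k$ the vector $\vec u_i$ is orthogonal to $V$ and hence annihilates every $\langle\vec f,x^*\rangle\in V$ with $x^*\in X^*$, which by separation of points forces $\tilde f_i=0$. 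The problem thus collapses to the $k$-tuples $(\tilde f_i)_{i\leq k}\in F^k$ and $(\tilde g_i)_{i\leq k}\in G^k$, whose convex bodies in $\R^k$ have the same Minkowski dot product as the original pair; the non-degenerate argument applies with constant $Ck^{3/2}\leq Cn^{3/2}$. I expect this degeneracy reduction to be the only genuine obstacle; everything else is a direct unravelling of the definitions.
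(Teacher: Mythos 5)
The paper does not give a proof of Proposition~\ref{TH2}; it restates the result from the cited reference \cite{hytonen_remarks_2023}, and the relevant technical ingredients---Lemma~\ref{TH1}, the identities~\eqref{extendbiliform} and~\eqref{matrixswapbili}, and the projection trick for degenerate John ellipsoids (which the paper itself spells out in the analogous Remark~\ref{DegenToGen})---are all in place. Your argument assembles exactly those ingredients in the expected way: the reduction $t(\vec f,\vec g)=t(R_f\vec f,R_f^{-\top}\vec g)=\sum_i t(f_i,g_i)$ via~\eqref{matrixswapbili} and~\eqref{extendbiliform}, the scalar hypothesis applied termwise, and Lemma~\ref{TH1} to close, followed by the orthogonal-projection reduction for the degenerate case. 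This is essentially the intended proof. Two small points worth being explicit about when you write this up: first, you should note that $f_i,g_i\in F,G$ because $F,G$ are linear and $R_f$ only recombines the components of $\vec f$; second, in the degenerate case, observe explicitly that the full dot product $\llangle\vec f\rrangle_X\cdot\llangle\vec g\rrangle_Y$ equals the $\R^k$ dot product of the truncated tuples because $\langle\vec f,x^*\rangle\cdot\vec u_i=\langle\tilde f_i,x^*\rangle=0$ for $i>k$, so the coordinates of $\llangle\vec g\rrangle_Y$ orthogonal to $V$ are annihilated; you asserted this but did not quite justify it. These are routine, and your proof is correct.
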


Dealing with the whole dual space will sometimes turn out to be problematic when we want to calculate with the convex bodies. Fortunately, we can estimate the convex bodies by considering a norming subspace of the dual. To see this we need the so called Helly's condition.
\begin{prop}[Helly's condition]
    Let $E$ be a normed space. Let $e^*_i \in E^*$, $c_i \in \mathbb K$, $i = 1, \dots , n$, and $M > 0$ be given. The
following assertions are equivalent.
\begin{enumerate}\label{Helly}
    \item[(1)] For every  $\varepsilon> 0$ there exists $e \in E$ such that $\|e\|_E \leq M + \varepsilon$ and 
    \[
        \langle e,e^*_i\rangle=c_i,\quad\forall i=1,\dots,n.
    \]
    \item[(2)] We have
    \[
        \Big|\sums{i}{1}{n}a_ic_i\Big|\leq M\Big\|\sums{i}{1}{n}a_ie_i^*\Big\|_{E^*},\quad \forall a_i\in\mathbb K,\,\,i=1,\dots,n.
    \]
\end{enumerate}
\end{prop}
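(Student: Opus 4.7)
\emph{Plan.} The implication $(1)\Rightarrow(2)$ is immediate: given $\varepsilon>0$ and the element $e\in E$ from $(1)$,
\[
\Big|\sums{i}{1}{n}a_i c_i\Big|=\Big|\Big\langle e,\sums{i}{1}{n}a_i e_i^*\Big\rangle\Big|\leq (M+\varepsilon)\Big\|\sums{i}{1}{n}a_i e_i^*\Big\|_{E^*},
\]
and letting $\varepsilon\downarrow 0$ yields $(2)$.

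For $(2)\Rightarrow(1)$, the plan is to pass to the finite-dimensional image of the continuous linear map $\Phi\colon E\to\mathbb{K}^n$, $\Phi(e)\coloneqq(\langle e,e_i^*\rangle)_{i=1}^n$, and recast $(1)$ as a quotient-norm estimate. Setting $V\coloneqq\Phi(E)\subset\mathbb{K}^n$, the first step is to check that $c\coloneqq(c_i)_{i=1}^n$ lies in the linear subspace $V$. If not, the Hahn-Banach theorem in $\mathbb{K}^n$ would supply coefficients $(a_i)$ with $\sums{i}{1}{n}a_i\Phi(e)_i=0$ for all $e\in E$ but $\sums{i}{1}{n}a_ic_i\neq0$; the vanishing condition is equivalent to $\sums{i}{1}{n}a_ie_i^*=0$ in $E^*$, which together with $\sums{i}{1}{n}a_ic_i\neq0$ contradicts $(2)$. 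I expect this to be the main conceptual step, as it is the only place where $(2)$ is used to exclude the degenerate situation in which the right-hand side of $(2)$ is a seminorm on $\mathbb{K}^n$ that vanishes on nonzero vectors.

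Having secured $c\in V$, I would equip $V$ with the quotient norm $\|v\|_V\coloneqq\inf\{\|e\|_E\setcolon\Phi(e)=v\}$, so that condition $(1)$ is precisely the assertion $\|c\|_V\leq M$. Since $V$ is finite-dimensional, every $\phi\in V^*$ extends to a functional on $\mathbb{K}^n$ and hence has the form $v\mapsto\sums{i}{1}{n}a_iv_i$ for some $(a_i)\in\mathbb{K}^n$, and the elementary identity
\[
\sup_{\|e\|_E\leq 1}\Big|\sums{i}{1}{n}a_i\langle e,e_i^*\rangle\Big|=\Big\|\sums{i}{1}{n}a_ie_i^*\Big\|_{E^*}
\]
identifies the $V^*$-norm of this functional with $\bigl\|\sums{i}{1}{n}a_ie_i^*\bigr\|_{E^*}$. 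Finite-dimensional duality then gives
\[
\|c\|_V=\sup_{(a_i)}\frac{\bigl|\sums{i}{1}{n}a_ic_i\bigr|}{\bigl\|\sums{i}{1}{n}a_ie_i^*\bigr\|_{E^*}}\leq M,
\]
where the last inequality is exactly $(2)$, and unravelling the definition of the quotient norm produces, for each $\varepsilon>0$, an $e\in E$ with $\Phi(e)=c$ and $\|e\|_E\leq M+\varepsilon$. Once $c\in V$ is established, the rest is routine bookkeeping with Hahn-Banach in $\mathbb{K}^n$.
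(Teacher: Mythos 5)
Your proof is correct. Note that the paper itself does not prove this proposition; it only points to \cite[Proposition 4.3.1]{hytonen_analysis_2016}, so there is no internal argument of the paper to compare against. Your quotient-space reduction is a clean and standard way to organise the Hahn--Banach content of Helly's condition: the direction $(1)\Rightarrow(2)$ is the one-line estimate you give, and for $(2)\Rightarrow(1)$ the two key steps (membership $c\in V$, and $\|c\|_V\leq M$ via finite-dimensional duality) are both sound. A few small points are worth spelling out if this were to be written in full: the quotient functional $\|v\|_V=\inf\{\|e\|_E\setcolon\Phi(e)=v\}$ is a genuine norm because $\ker\Phi$ is closed (each $e_i^*$ is continuous); the identification $\|\phi\|_{V^*}=\bigl\|\sum_i a_ie_i^*\bigr\|_{E^*}$ requires the two-sided comparison between $\Phi$ applied to the unit ball of $E$ and the unit ball of $(V,\|\cdot\|_V)$, which you assert as an ``elementary identity'' but do not justify (the direction $\geq$ needs the defining property of the infimum); and in the final supremum one should tacitly restrict to tuples $(a_i)$ with $\sum_i a_ie_i^*\neq 0$, which is harmless since $(2)$ forces $\sum_i a_ic_i=0$ whenever $\sum_i a_ie_i^*=0$ — the same observation that drives your $c\in V$ step.
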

\noindent See, e.g., \cite[Proposition 4.3.1]{hytonen_analysis_2016}  for a proof of Helly's condition.
\begin{lem}\label{convBodyCalc}
    Let $Z\subset X^*$ be norming for $X$. For $\vec x \in X^n$ we have
    \[
        \llangle \vec x\rrangle_X=\bigcap_{\varepsilon>0}(1+\varepsilon)\{\langle \vec x, z\rangle\,\colon\, z\in \Bar{B}_Z\}.
    \]
\end{lem}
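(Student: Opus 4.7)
The plan is to prove the two set inclusions separately, leveraging that $\llangle \vec x\rrangle_X$ is convex, symmetric and compact (Lemma 2.3 of \cite{hytonen_remarks_2023}), together with Helly's condition for the less trivial inclusion. Throughout I will use that $Z\subset X^*$ inherits its norm from $X^*$, so $\Bar{B}_Z=\Bar{B}_{X^*}\cap Z$, and that ``norming'' means $\|x\|_X=\sup_{z\in \Bar{B}_Z}|\langle x,z\rangle|$.

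For the $\supset$ direction, since $\Bar{B}_Z\subset \Bar{B}_{X^*}$ one immediately has $\{\langle \vec x,z\rangle\setcolon z\in \Bar{B}_Z\}\subset \llangle \vec x\rrangle_X$. If $c$ lies in the right-hand intersection, then for every $\varepsilon>0$ there is some $z_\varepsilon\in\Bar{B}_Z$ with $\langle \vec x,z_\varepsilon\rangle=c/(1+\varepsilon)\in \llangle \vec x\rrangle_X$. Letting $\varepsilon\to 0^+$ and invoking the compactness of $\llangle \vec x\rrangle_X$ yields $c\in \llangle \vec x\rrangle_X$.

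For the $\subset$ direction, I would apply Helly's condition (Proposition \ref{Helly}) to the normed space $E=Z$, with $M=1$, the prescribed scalars $c_i$, and the functionals $e_i^*\in Z^*$ given by evaluation $e_i^*(z)=\langle x_i,z\rangle$. The norming hypothesis on $Z$ is precisely what allows one to identify the induced $Z^*$-norm of these functionals with the original $X$-norm, namely
\[
    \Bigl\|\sums{i}{1}{n}a_ie_i^*\Bigr\|_{Z^*}=\sup_{z\in\Bar{B}_Z}\Bigl|\sums{i}{1}{n}a_i\langle x_i,z\rangle\Bigr|=\Bigl\|\sums{i}{1}{n}a_ix_i\Bigr\|_X.
\]
Given $c\in \llangle \vec x\rrangle_X$, write $c_i=\langle x_i,x^*\rangle$ for some $x^*\in \Bar{B}_{X^*}$; condition $(2)$ of Helly then reduces to $|\sum_i a_ic_i|=|\langle \sum_i a_ix_i,x^*\rangle|\leq\|\sum_i a_ix_i\|_X$, which follows from $\|x^*\|_{X^*}\leq 1$. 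Helly therefore furnishes, for every $\varepsilon>0$, some $z\in Z$ with $\|z\|_Z\leq 1+\varepsilon$ and $\langle x_i,z\rangle=c_i$ for all $i$. Rescaling by $(1+\varepsilon)^{-1}$ places $z/(1+\varepsilon)$ in $\Bar{B}_Z$ and gives $c\in(1+\varepsilon)\{\langle \vec x,z\rangle\setcolon z\in \Bar{B}_Z\}$.

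The main subtlety is setting up Helly's condition correctly: one has to view the $x_i$ as functionals on $Z$ rather than as elements of $X$, and recognize that the norming hypothesis is exactly what identifies the resulting $Z^*$-norm with the $X$-norm. Without this identification, Helly's estimate would be in terms of the (potentially larger) $Z^*$-norm and would not suffice to produce a $z\in Z$ of controlled norm that matches the values $c_i=\langle x_i,x^*\rangle$ coming from the larger ball $\Bar{B}_{X^*}$.
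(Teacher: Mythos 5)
Your proof is correct and follows essentially the same route as the paper: the $\supset$ inclusion via $\Bar{B}_Z\subset\Bar{B}_{X^*}$ plus compactness/closedness of the convex body, and the $\subset$ inclusion via Helly's condition applied to $E=Z$ with the evaluation functionals (the paper's $Jx_i$) and the norming hypothesis providing the crucial isometric identification $\|\sum_i a_i e_i^*\|_{Z^*}=\|\sum_i a_i x_i\|_X$. The only cosmetic difference is that you take $M=1$ where the paper takes $M=\|x^*\|_{X^*}\leq 1$, which changes nothing.
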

\begin{proof}
    Let $J\colon X\to Z^*$ be the mapping defined by $\langle z,Jx\rangle=\langle x,z\rangle$ for $x\in X$ and $z\in Z$. Since $Z$ is norming for $X$, the mapping $J$ is an isometry. Indeed,
    \[
        \|Jx\|_{Z^*}=\sup_{\|z\|_Z\leq1}\langle z,Jx\rangle=\sup_{\|z\|_Z\leq1}\langle x,z\rangle=\|x\|_X.
    \]
    We will now check that condition $(\mathit{2})$ of Proposition \ref{Helly} 
    holds with $E=Z$, $c_i=\langle x_i,x^*\rangle$, $e_i^*= Jx_i$ and $M=\|x^*\|_{X^*}$. Since $J$ is an isometry, we have
    \begin{align*}
        \Big|\sums{i}{1}{n}a_i\langle x_i,x^*\rangle\Big|=\Big|\langle\sums{i}{1}{n}a_ix_i,x^*\rangle\Big|&\leq \|x^*\|_{X^*}\Big\|\sums{i}{1}{n}a_ix_i\Big\|_X\\&=\|x^*\|_{X^*}\Big\|\sums{i}{1}{n}a_iJx_i\Big\|_{Z^*}.
    \end{align*}
    Now Helly's condition gives us that for every $\varepsilon>0$ there exists $z_\varepsilon\in Z$ such that $\|z_\varepsilon\|_Z\leq \|x^*\|_{X^*}+\varepsilon$ and 
    \[
        \langle x_i,x^*\rangle=\langle z_\varepsilon,Jx_i\rangle=\langle x_i,z_\varepsilon\rangle,\qquad \forall i=1,\dots,n.
    \]
    Thus we have 
    \begin{align*}
        \llangle\vec x\rrangle_X&=\{\langle\vec x,x^*\rangle\,\colon\,\|x^*\|_{X^*}\leq 1\}\\&\subset\bigcap_{\varepsilon>0}\{\langle\vec x,z_\varepsilon\rangle\,\colon\,\|z_\varepsilon\|_{Z}\leq 1+\varepsilon\}\\&=\bigcap_{\varepsilon>0}\{\langle\vec x,z_\varepsilon\rangle\,\colon\,\Big\|\frac{z_\varepsilon}{1+\varepsilon}\Big\|_{Z}\leq 1\}\\&=\bigcap_{\varepsilon>0}\{\langle\vec x,(1+\varepsilon)z_\varepsilon\rangle\,\colon\,\|z_\varepsilon\|_{Z}\leq 1\}\\&=\bigcap_{\varepsilon>0}(1+\varepsilon)\{\langle\vec x,z_\varepsilon\rangle\,\colon\,\|z_\varepsilon\|_{Z}\leq 1\}.
    \end{align*}
    Conversely, we have
    \begin{align*}
        \bigcap_{\varepsilon>0}(1+\varepsilon)\{\langle\vec x,z_\varepsilon\rangle\,\colon\,\|z_\varepsilon\|_{Z}\leq 1\}&\subset\bigcap_{\varepsilon>0}(1+\varepsilon)\{\langle\vec x,x^*\rangle\,\colon\,\|x^*\|_{X^*}\leq 1\}\\&=\bigcap_{\varepsilon>0}(1+\varepsilon)\llangle \vec x\rrangle_{X},
    \end{align*}
    and by the properties of convex bodies we get
    \[
        \bigcap_{\varepsilon>0}(1+\varepsilon)\llangle \vec x\rrangle_{X}=\llangle \vec x\rrangle_{X}.
    \]
    Indeed, if $a\in\bigcap_{\varepsilon>0}(1+\varepsilon)\llangle \vec x\rrangle_{X}$, then for any $\varepsilon>0$ we have 
    \[
        a_\varepsilon\coloneqq\frac{a}{1+\varepsilon}\in\llangle \vec x\rrangle_{X}.
    \]
    The sequence $a_\varepsilon$ clearly converges to $a$ as $\varepsilon\to0$. On the other hand, by compactness we can extract a subsequence $a_{\varepsilon_j}$ that converges to some $\tilde{a}\in\llangle \vec x\rrangle_{X}$ and it follows that $a=\tilde a\in\llangle\vec x\rrangle_X$. The reverse inclusion $\llangle \vec x\rrangle_{X}\subset\bigcap_{\varepsilon>0}(1+\varepsilon)\llangle \vec x\rrangle_{X}$ is clear, since $\llangle \vec x\rrangle_{X}$ is convex and symmetric.
\end{proof}

\section{Setting and the main result}
We follow the notation and set-up from \cite{beltran_multi-scale_2020}. We denote the collection of dyadic subcubes of a cube $Q$ by $\mathscr{D}(Q)$. We will work with a dyadic lattice $\mathscr D$ that satisfies the following four properties:
\begin{enumerate}
    \item The cubes in $\mathscr D$ have side length of the form $2^k$ with $k\in \Z$.
    \item If $Q\in\mathscr D$ and $Q'\in \mathscr D(Q)$, then $Q'\in \mathscr D$.
    \item If $Q',Q''\in\mathscr D$, then there exists $Q\in\mathscr{D}$ such that $Q',Q''\in\mathscr{D}(Q)$.
    \item Every compact set in $\R^d$ is contained in some cube from $\mathscr D$.
\end{enumerate}
We say that a collection of cubes $\mathscr{Q}\subset\mathscr D$ is $\gamma$-sparse if for every cube $Q\in \mathscr{Q}$ there exists a subset $E_Q$ such that $|E_Q|\geq \gamma|Q|$ and the subsets $E_Q$ are pairwise disjoint.

For a Banach space $B$ and $p, r \in
[1,\infty]$ we define the Lorentz  space $L^{p,r}_B$
as the space of strongly measurable
functions $f \colon \R^d \to B$ so that the function $x \mapsto |f(x)|_B$ is in the scalar
Lorentz space $L^{p,r}$. The Bochner space $L^p_B$ is defined  analogously.

We also define $S_B$ to be the space of all functions of the form
\[
    f=\sums{i}{1}{N}a_i\mathbbm1_{Q_i},
\]
where $a_i\in B$ and $Q_i$ are dyadic cubes of $\R^d$ that are contained in a compact set.
For Banach spaces $B_1, B_2$ we consider the
space $\operatorname{Op}_{B_1,B_2}$
of linear operators $T$ mapping functions in $S_{B_1}$
to weakly
measurable $B_2$-valued functions  with the property that $x \mapsto \langle T f(x), \lambda\rangle$ is locally
integrable for any $\lambda \in B_2^*$. 

For a function $f$ we define $\operatorname{Dil}_tf(x) = f(tx)$ and for an operator $T$ we define the
dilated operator $\operatorname{Dil}_tT$ by
\[
\operatorname{Dil}_tT=\operatorname{Dil}_t\circ\hspace{0.1cm} T\circ\operatorname{Dil}_{t^{-1}}.
\]
There will be six conditions that we assume the multi-scale operator to satisfy.
\begin{dfntn}\label{BRSop}
    We say that the sum $T=\sum_jT_j$ is a BRS operator (named after the authors of \cite{beltran_multi-scale_2020}, where this class of operators is studied) if it satisfies the following conditions.

    The single scale operators $T_j$ are in $\operatorname{Op}_{B_1,B_2}$ such that the support condition \[\operatorname{supp} (\operatorname{Dil}_{2^j}T_j)f\subset \{x\in\R^n\setcolon \operatorname{dist}(x,\operatorname{supp}f)\leq1\}\tag{T1}\label{supportCond}\]
holds for all $f\in S_{B_1}$, and they satisfy the bound
\[\underset{j\in\Z}{\sup}\,||\operatorname{Dil}_{2^j}T_j||_{L^p_{B_1}\to L^q_{B_2}}\leq A_\circ(p,q).\tag{T2}\label{singleScaleLpqbound}\]
Furthermore, the single scale operators and their adjoints satisfy the following regularity conditions 
\begin{align*}\label{singleScalereqcond}
    \sup_{|h|\leq1}|h|^{-\kappa}\sup_{j\in\Z}\|(\operatorname{Dil}_{2^j}T_j)\circ\Delta_h\|_{L^{p}_{B_1}\to L^{q}_{B_2}}\leq B \tag{T3}
\end{align*}
and
\begin{align*}\label{adjSingleScalereqcond}
    \sup_{|h|\leq1}|h|^{-\kappa}\sup_{j\in\Z}\|(\operatorname{Dil}_{2^j}T^*_j)\circ\Delta_h\|_{L^{q'}_{B_2^*}\to L^{p'}_{B_1^*}}\leq B,\tag{T4}
\end{align*}
where $\Delta_h f(x)=f(x+h)-f(x)$ and $\kappa>0$ is a fixed parameter.

Let $N_1,N_2$ be integers such that $N_1\leq N_2$. We consider the sum $\sums{j}{N_1}{N_2}T_j$ and stipulate that it satisfies 
\begin{align*}\label{weakLpbound}
    \sup_{N_1\leq N_2}\left\|\sums{j}{N_1}{N_2}T_j\right\|_{L^p_{B_1}\to L^{p,\infty}_{B_2}}\leq A(p)\tag{T5}
\end{align*}
and
\begin{align*}\label{restStrongLqbound}
    \sup_{N_1\leq N_2}\left\|\sums{j}{N_1}{N_2}T_j\right\|_{L^{q,1}_{B_1}\to L^{q}_{B_2}}\leq A(q).\tag{T6}
\end{align*}
\end{dfntn}

To facilitate the discussion for convex body domination we use the following general definition.
\begin{dfntn}
    Suppose that pairs of normed spaces $(X(Q),Y(Q))$ are associated to every dyadic cube $Q \in \mathscr D$. We say that a bilinear form $t\colon F\times G\to\R$ satisfies $(X,Y)$ convex body domination, if $F\subset X(Q)$ and $G\subset Y(Q)$ for every $Q\in \mathscr{D}$, and if there exists a constants $C>0$ and $\gamma>0$ such that for every $(\vec f,\vec g)\in F^n\times G^n$, there exists a $\gamma$-sparse collection $\mathscr{Q}\subset\mathscr{D}$ that satisfies 
    \[
        |t(\vec f, \vec g)|\leq C \sum_{Q\in\mathscr{Q}} |Q|\llangle \vec f\rrangle_{X(Q)}\cdot\llangle \vec g\rrangle_{Y(Q)}.
    \]
    In addition, we say that a linear operator $T$ satisfies $(X,Y)$ convex body domination, if the associated bilinear form $t(\vec f,\vec g)\coloneqq\langle T\vec f,\vec g\,\rangle$ satisfies $(X,Y)$ convex body domination. 
\end{dfntn}
\noindent Our results are in the case $(X,Y)=(\textit{\L}^p_{B_1},\textit{\L}^{q'}_{B_2^*})$, where 
\[
    \|f\|_{\textit{\L}^p(Q,B_1)}\coloneqq \left(\fint_Q|f(x)|_{B_1}^p\intD x\right)^\frac{1}{p}
\]
and $\|\cdot\|_{\textit{\L}^{q'}(Q,B_2^*)}$ is defined analogously. We fix an orthonormal basis $(\vec e_i)_{i=1}^{n}$ of $\R^n$ and denote $f_i\coloneqq \vec f\cdot\vec e_i$. Then the bilinear form is defined by \[\langle T\vec{f},\vec g\,\rangle\coloneqq\sums{i}{1}{n}\langle T
f_i,g_i\rangle\coloneqq\sum_{i=1}^{n}\int_{\R^n}\langle T f_i(x),g_i(x)\rangle_{(B_2,B_2^*)}\intD x.\]

The main result of this article is the following theorem, which asserts that the multi-scale operator $\sum_{j}T_j$ satisfies $(\textit{\L}^p_{B_1},\textit{\L}^{q'}_{B_2^*})$ convex body domination.
\begin{thm}\label{mainresult} Let $T=\sum_{j=N_1}^{N_2}T_j$ be a BRS operator and let $N_1,N_2$ be integers such that $N_1\leq N_2$. Let $1< p\leq q<\infty$ and $0<\gamma<1$.
For all $\vec{f}\in S_{B_1}^n$ and $\vec{g}\in S_{B_2^*}^n$ there exists a $\gamma$-sparse collection  of dyadic cubes $\mathscr{Q}$ such that
\begin{align*}
    |\langle T\Vec{f},\vec g\,\rangle|\lesssim_{d,p,q,\gamma,\kappa} \mathcal{C}n^{\frac{3}{2}+\frac{1}{p}+\frac{1}{q'}}\sum_{Q\in\mathscr{Q}} |Q| \,\llangle\Vec{f}\rrangle_{\textit{\L}^p(Q,B_1)}\cdot \llangle\Vec{g}\rrangle_{\textit{\L}^{q'}(Q,B_2^*)},
\end{align*}
where $\mathcal{C}=A(p)+A(q)+A_\circ(p,q)\log(2+\frac{B}{A_\circ(p,q)})$.
\end{thm}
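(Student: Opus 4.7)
The plan is to prove the theorem by induction on the number of scales $N_2 - N_1 + 1$, following the structure indicated in the paper: Section 4 handles the base case (single scale) and Section 5 the induction step. For the base case, I would fix a dyadic cube $Q$ of side length comparable to $2^{N_1}$ such that the $2^{N_1}$-neighbourhood of the supports of $\vec f$ and $\vec g$ is contained in $Q$. The support condition \eqref{supportCond} then ensures that $\langle T_{N_1} f, g\rangle$ depends only on the restrictions of $f,g$ to $Q$, and rescaling \eqref{singleScaleLpqbound} together with H\"older gives the scalar estimate
\[
    |\langle T_{N_1} f, g\rangle|\lesssim A_\circ(p,q)\,|Q|\,\|f\|_{\textit{\L}^p(Q,B_1)}\|g\|_{\textit{\L}^{q'}(Q,B_2^*)}.
\]
Proposition \ref{TH2} lifts this to a convex body inequality at the cost of an $n^{3/2}$ factor, and the trivial singleton family $\{Q\}$ is $\gamma$-sparse, completing the base case.

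For the induction step, I would fix a large dyadic cube $Q_0$ containing the relevant supports and run a Calder\'on--Zygmund stopping-time construction, selecting a collection $\mathscr R$ of maximal dyadic subcubes $R \subsetneq Q_0$ on which a suitable quantity built from the convex bodies $\llangle \vec f\rrangle_{\textit{\L}^p(\cdot,B_1)}$ and $\llangle \vec g\rrangle_{\textit{\L}^{q'}(\cdot,B_2^*)}$ exceeds a constant multiple of its analogue on $Q_0$. To convert convex body comparisons into scalar ones I would invoke the John ellipsoid reduction of Lemma \ref{TH1} on $Q_0$, producing scalar representatives $f_i, g_i$ and reducing the stopping-time argument to $n$ scalar constructions in the spirit of \cite{beltran_multi-scale_2020}. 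The weak bounds \eqref{weakLpbound} and \eqref{restStrongLqbound} are exactly what guarantee, via a Chebyshev-type argument, that the union $\bigcup_{R\in\mathscr R} R$ occupies only a small fraction of $|Q_0|$, which both preserves $\gamma$-sparseness and enables recursion. On each stopping cube $R$ the remaining active scales form a BRS operator with fewer scales and the inductive hypothesis applies, while on $Q_0 \setminus \bigcup_{R\in\mathscr R} R$ the averages are controlled and the base case, together with the regularity conditions \eqref{singleScalereqcond} and \eqref{adjSingleScalereqcond}, handles the contribution. The logarithm in $\mathcal{C}$ appears when optimising between the strong single-scale bound and the regularity bounds across the geometric range of scales.

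The main obstacle I anticipate is that convex body domination does not admit a natural scalar stopping-time analogue: the relevant quantities are symmetric convex subsets of $\R^n$, not numbers. The workaround I envision is to perform the John ellipsoid reduction locally on $Q_0$, apply the scalar stopping-time argument coordinate-by-coordinate, and recombine the $n$ resulting families, accepting the $n$-dependent loss. This is precisely where the additional $n^{1/p+1/q'}$ factor arises, beyond the $n^{3/2}$ coming from Proposition \ref{TH2}: summing $n$ scalar single-scale estimates and applying H\"older in the coordinate index costs $n^{1/p}$ on the $\vec f$ side and $n^{1/q'}$ on the $\vec g$ side. Ensuring that the John ellipsoid chosen on $Q_0$ continues to yield valid bounds on all subsequent subcubes, without degeneration between successive inductive steps, will be the most delicate bookkeeping.
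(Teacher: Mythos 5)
Your high-level outline matches the paper's actual argument: induction on the number of scales, John ellipsoid reduction to scalars via Lemma~\ref{TH1}, a Calder\'on--Zygmund-type decomposition inside a fixed cube $Q_0$, and balancing the single-scale bound against the regularity conditions over a window of $\ell\sim\kappa^{-1}\log(2+B/A_\circ)$ scales to produce the logarithm. However, two of the mechanisms you propose are misattributed, and one would fail as stated.

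First, you claim that the weak bounds \eqref{weakLpbound} and \eqref{restStrongLqbound} are what guarantee, via Chebyshev, that the exceptional region occupies a small fraction of $|Q_0|$. This does not type-check: the stopping region is defined from the inputs $f_i,g_i$, not from $Tf$, so an $L^p\to L^{p,\infty}$ bound on $T$ has no bearing on it. In the actual proof the exceptional set is
$\Omega=\bigcup_i\Omega_i^{(f)}\cup\Omega_i^{(g)}$, a union of superlevel sets of the maximal functions $\mathcal M_p f_i$ and $\mathcal M_{q'} g_i$, and $|\Omega|<(1-\gamma)|Q_0|$ follows from the weak $(1,1)$ boundedness of the Hardy--Littlewood maximal operator alone. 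The conditions \eqref{weakLpbound}, \eqref{restStrongLqbound} are instead used to control the good-function contribution $\langle T h_i,g_i\rangle$ and the pieces $II$, $III_1$, $III_2$ after the Whitney/Calder\'on--Zygmund split. Relatedly, the origin of the $n^{1/p+1/q'}$ loss is not a coordinate-wise H\"older as you suggest; it comes from the need to take the union over $i=1,\dots,n$ of the level sets $\Omega_i$, which forces the thresholds defining $\Omega_i^{(f)},\Omega_i^{(g)}$ to carry a factor of $n$ (see the $(100^dn/(1-\gamma))^{1/p}$ and $(100^dn/(1-\gamma))^{1/q'}$ in the paper), and this factor then propagates into the pointwise bounds $\|h_i\|_\infty\lesssim n^{1/p}\|f_i\|_{\textit{\L}^p(Q_0)}$ etc.

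Second, your plan to "apply the scalar stopping-time argument coordinate-by-coordinate and recombine the $n$ resulting families" is dangerous: a union of $n$ different $\gamma$-sparse collections need not be $\gamma$-sparse, and fixing this costs a factor depending on $n$ in the sparseness parameter, which you cannot afford. The paper avoids this by forming a single set $\Omega$ (the union of the $n$ level sets), performing one Whitney decomposition, and hence producing one family of stopping cubes; the $n$-dependence is absorbed into the size of the thresholds rather than into the sparse constant. Finally, the worry you raise at the end about John ellipsoid degeneration propagating through the recursion does not in fact arise: the inductive hypothesis $|\langle S_P\vec f,\vec g\rangle|\le U(N-1)\mathfrak G_P(\vec f,\vec g)$ is invoked with the original vectors $\vec f,\vec g$, and the John ellipsoid and the transformation $R_f$ are only used at the current step $Q_0$ to convert the final scalar estimate (the Claim, inequality \eqref{TheClaim}) back into a convex body estimate via Lemma~\ref{TH1}; nothing about $R_f$ is passed down to the subcubes.
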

The proof will be done by adapting the induction argument of \cite{beltran_multi-scale_2020} on the difference $N_2-N_1$. The base case is covered in Section 4 and the rest of the proof in Section 5.

\section{Single scale estimate}
We use the notation $f_i\coloneqq\vec f\cdot\vec e_i$ throughout this section, where, where $(\vec e_i)_{i=1}^n$ is a fixed orthonormal basis of $\R^n$. For this section we will only need conditions \ref{supportCond} and \ref{singleScaleLpqbound} from Definition \ref{BRSop}.
Note that by rescaling \ref{singleScaleLpqbound} we get 
\[
    ||T_j||_{L^p_{B_1}\to L^q_{B_2}}\leq 2^{-jd(\frac{1}{p}-\frac{1}{q})}A_\circ(p,q).\numberthis\label{rescaledSingleScaleBound}
\]
Indeed, by defining $\tilde{f}\coloneqq \operatorname{Dil}_{2^j}f$ we get
\[
    \|T_jf\|_{ L^{q}_{B_2}}=\|T_j(\operatorname{Dil}_{2^{-j}}\tilde{f})\|_{ L^{q}_{B_2}}
\]
and via change of variables $x=2^jy$, we get
\begin{align*}
    \left(\int|T_j(\operatorname{Dil}_{2^{-j}}\tilde{f})(x)|_{B_2}^q\intD x\right)^\frac{1}{q} &= 2^{\frac{jd}{q}} \left(\int|(\operatorname{Dil}_{2^{j}}T_j)\tilde{f}(y)|_{B_2}^q\intD y\right)^\frac{1}{q}
    \\&\leq 2^{\frac{jd}{q}}A_\circ(p,q) \left(\int|\tilde{f}(y)|_{B_1}^p\intD y\right)^\frac{1}{p}
    \\&= 2^{\frac{jd}{q}}A_\circ(p,q) \left(\int|f(2^jy)|_{B_1}^p\intD y\right)^\frac{1}{p}.
\end{align*}
Changing variables back leads to \eqref{rescaledSingleScaleBound}.

The following proposition will serve as the base case in the induction proof of the $(\textit{\L}^p_{B_1},\textit{\L}^{q'}_{B_2^*})$ convex body domination for the sum $\sum_j T_j$. The conclusion is that the single scale operators $T_j$ satisfy $(\textit{\L}^p_{B_1},\textit{\L}^{q'}_{B_2^*})$ convex body domination.
\begin{prop}\label{singleScaleDomination}
    Let $T_j$ satisfy \ref{supportCond} and \ref{singleScaleLpqbound} from Definition \ref{BRSop}. Then for all $\vec{f}\in S_{B_1^n}$ and $\vec{g}\in S_{(B_2^*)^n} $ there exists a disjoint family $\mathscr{Q}$ of dyadic cubes such that 
\[
    |\langle T_j\vec{f},\vec g\,\rangle|\leq 3^{d(\frac{1}{q'}-\frac{1}{p'})}A_\circ(p,q)\,n^\frac{3}{2} \sum_{Q\in\mathscr{Q}}|3Q|\,\llangle \Vec{f}\rrangle_{\textit{\L}^p(3Q,B_1)}\cdot\llangle\Vec{g}\rrangle_{\textit{\L}^{q'}(3Q,B_2^*)}.
\]
\end{prop}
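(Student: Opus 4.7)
The plan is to decompose $\operatorname{supp}\vec f$ into a disjoint family $\mathscr{Q}$ of dyadic cubes of side length $2^j$ and apply the scalar-to-convex-body lifting of Proposition \ref{TH2} on each local piece. Since $\vec f \in S_{B_1}^n$ is compactly supported and dyadic cubes nest, only finitely many $Q \in \mathscr{Q}$ contribute and each $\vec f \mathbbm{1}_Q$ is again in $S_{B_1}^n$.

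Rescaling condition \ref{supportCond} gives $\operatorname{supp} T_j f \subset \{y : \operatorname{dist}(y, \operatorname{supp} f) \leq 2^j\}$, which for $f$ supported in a cube $Q$ of side length $2^j$ is contained in $3Q$. Rescaling condition \ref{singleScaleLpqbound} as in \eqref{rescaledSingleScaleBound} yields $\|T_j\|_{L^p_{B_1} \to L^q_{B_2}} \leq 2^{-jd(1/p - 1/q)} A_\circ(p,q)$. For each $Q \in \mathscr{Q}$, set $t_Q(f, g) \coloneqq \langle T_j(f \mathbbm{1}_Q), g \mathbbm{1}_{3Q}\rangle$. By the support containment together with Hölder's inequality and the rescaled operator bound,
\[
    |t_Q(f, g)| \leq 2^{-jd(1/p - 1/q)} A_\circ(p,q) \|f \mathbbm{1}_Q\|_{L^p_{B_1}} \|g \mathbbm{1}_{3Q}\|_{L^{q'}_{B_2^*}}.
\]
Writing $\|f \mathbbm{1}_Q\|_{L^p_{B_1}} \leq |3Q|^{1/p} \|f\|_{\textit{\L}^p(3Q, B_1)}$ and $\|g \mathbbm{1}_{3Q}\|_{L^{q'}_{B_2^*}} = |3Q|^{1/q'} \|g\|_{\textit{\L}^{q'}(3Q, B_2^*)}$, and collecting powers of $2$ and $3$ via $|3Q| = 3^d 2^{jd}$ together with the identity $1/p + 1/q' - 1 = 1/q' - 1/p'$, a short computation reduces the right-hand side to
\[
    3^{d(1/q' - 1/p')} A_\circ(p,q) |3Q| \|f\|_{\textit{\L}^p(3Q, B_1)} \|g\|_{\textit{\L}^{q'}(3Q, B_2^*)}.
\]

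Applying Proposition \ref{TH2} to each $t_Q$ with $(X, Y) = (\textit{\L}^p(3Q, B_1), \textit{\L}^{q'}(3Q, B_2^*))$ promotes this scalar bound to the convex-body inequality
\[
    |t_Q(\vec f, \vec g)| \leq 3^{d(1/q' - 1/p')} A_\circ(p,q) n^{3/2} |3Q| \llangle \vec f\rrangle_{\textit{\L}^p(3Q, B_1)} \cdot \llangle \vec g\rrangle_{\textit{\L}^{q'}(3Q, B_2^*)}.
\]
Since $\vec f = \sum_{Q \in \mathscr{Q}} \vec f \mathbbm{1}_Q$ and the support inclusions give $\langle T_j \vec f, \vec g\rangle = \sum_{Q \in \mathscr{Q}} t_Q(\vec f, \vec g)$, summing over the disjoint family $\mathscr{Q}$ yields the advertised bound.

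The only subtle point is the constant bookkeeping that turns the product $2^{-jd(1/p - 1/q)} |3Q|^{1/p + 1/q'}$ into the clean form $3^{d(1/q' - 1/p')} |3Q|$; this reflects how the scale of the dyadic partition matches the intrinsic scale $2^j$ of the operator. Once this matching is done, the rest is a direct application of the abstract scalar-to-convex-body lifting of Proposition \ref{TH2}, with the factor $n^{3/2}$ arising solely from that lifting.
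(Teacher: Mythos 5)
Your proposal is correct and follows essentially the same path as the paper's own proof: partition at the intrinsic scale $2^j$, use the support condition \ref{supportCond} to localize the pairing to $t_Q(\vec f,\vec g)=\langle T_j(\vec f\mathbbm1_Q),\vec g\mathbbm1_{3Q}\rangle$, derive the scalar bound via Hölder and the rescaled $L^p\to L^q$ bound \eqref{rescaledSingleScaleBound}, and lift to convex bodies with Proposition \ref{TH2}. The only cosmetic difference is that you absorb $\|f\mathbbm1_Q\|_{L^p}\leq|3Q|^{1/p}\|f\|_{\textit{\L}^p(3Q)}$ in one step while the paper passes through $\|f\|_{\textit{\L}^p(Q)}$ first; the bookkeeping agrees.
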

\begin{proof}
Consider 
\begin{align*}    &a\coloneqq\langle T_j\vec{f},\vec{g}\,\rangle,\qquad a_Q\coloneqq t_Q(\vec{f},\vec{g})\coloneqq \Big\langle T_j\big[\mathbbm 1_Q \vec f\,\,\big],\mathbbm 1_{3Q}\vec g\,\Big\rangle
\end{align*}
and 
\[c_Q=C_n\, |3Q|\,\llangle \Vec{f}\rrangle_{\textit{\L}^p(3Q,B_1)}\cdot\llangle\Vec{g}\rrangle_{\textit{\L}^{q'}(3Q,B_2^*)},\] where $Q$ is a cube and $C_n=3^{d(\frac{1}{q'}-\frac{1}{p'})}A_\circ(p,q)\,n^{\frac{3}{2}}$.
We  we will check that there exists a disjoint family of cubes $\mathscr{Q}$ such that 
\begin{align}\label{asum}              a=\sum_{Q\in\mathscr{Q}}a_Q
\end{align}
and for every $Q\in\mathscr{Q}$, we have
\begin{align}\label{aQbound}
    |a_Q|\leq c_Q.
\end{align}

\noindent Indeed, we can choose $\mathscr{Q}$ to be a partition of $\R^d$ consisting of cubes that have side length equal to $2^j$.
\begin{enumerate}
    \item[\eqref{asum}]  Note that for $Q\in\mathscr{Q}$ the support condition gives that $T_j[\mathbbm{1}_Qf_i]$ is supported in $3Q$. Now we can calculate 
    \begin{align*}
        a&= \sums{i}{1}{n}\langle T_jf_i,g_i\rangle=
        \sums{i}{1}{n}\left\langle T_j\Big[\sum_{Q\in\mathscr{Q}}\mathbbm{1}_Qf_i\Big],g_i\right\rangle\\&=\sums{i}{1}{n}\sum_{Q\in\mathscr{Q}}\langle T_j[\mathbbm{1}_Qf_i],g_i\rangle=\sum_{Q\in\mathscr{Q}}\sums{i}{1}{n}\langle T_j[\mathbbm{1}_Qf_i],\mathbbm{1}_{3Q}g_i\rangle=\sum_{Q\in\mathscr{Q}}a_Q.
    \end{align*}
    Since $f_i$ is compactly supported, there are no issues of convergence in the above calculation.
    \item[\eqref{aQbound}] Note that the scalar Hölder inequality applied to the function \[x\mapsto|T_j[f\mathbbm1_Q](x)|_{B_2}|g\mathbbm1_{3Q}(x)|_{B_2^*}\] implies that
    \begin{align*}
        \langle T_j[f\mathbbm1_Q],g\mathbbm1_{3Q}\rangle&=\int_{\R^n}\langle T_j[f\mathbbm1_Q](x),g\mathbbm1_{3Q}(x)\rangle_{(B_2,B_2^*)}\intD x\\&\leq \int_{\R^n} |T_j[f\mathbbm1_Q](x)|_{B_2}|g\mathbbm1_{3Q}(x)|_{B_2^*}\intD x\\&\leq \|T_j[f\mathbbm1_Q]\|_{L^p_{B_2}}\|g\mathbbm1_{3Q}\|_{L^{p'}_{B_2^*}}.
    \end{align*} 
    Applying this and \eqref{rescaledSingleScaleBound} we get  
    \begin{align*}
        |t_{Q}(f,g)|& =|\langle T_j[f\mathbbm{1}_{Q}],g\mathbbm{1}_{3Q}\rangle|\\&\leq \|T_j[f\mathbbm1_Q]\|_{L^q_{B_2}}\|g\mathbbm1_{3Q}\|_{L^{q'}_{B_2^*}}\\&\leq A_\circ(p,q)2^{-jd(\frac{1}{p}-\frac{1}{q})}\|f\mathbbm1_Q\|_{L^p_{B_1}}\|g\mathbbm1_{3Q}\|_{L^{q'}_{B_2^*}}\\&=3^\frac{d}{q'}A_\circ(p,q)\,|Q|\,\|f\|_{\textit{\L}^p(Q,B_1)}\|g\|_{\textit{\L}^{q'}(3Q,B_2^*)}\\
        &\leq3^{d\left(\frac{1}{q'}-\frac{1}{p'}\right)}A_\circ(p,q)\,|3Q|\,\|f\|_{\textit{\L}^p(3Q,B_1)}\|g\|_{\textit{\L}^{q'}(3Q,B_2^*)},
    \end{align*}
    for all $f\in S_{B_1}$ and $g\in S_{B_2^*}$.
    This is the first condition from Proposition \ref{TH2} with \[t=t_Q,\quad  X=\textit{\L}^p(3Q,B_1), \quad Y=\textit{\L}^{q'}(3Q,B_2^*)\] and \[C=3^{d\left(\frac{1}{q'}-\frac{1}{p'}\right)}A_\circ(p,q)\,|3Q|.\] 
    By Proposition \ref{TH2} this implies that
    \[|a_Q|=|t_Q(\Vec{f},\Vec{g})|\leq C_n\,|3Q|\, \llangle \Vec{f}\rrangle_{\textit{\L}^p(3Q,B_1)}\cdot\llangle\Vec{g}\rrangle_{\textit{\L}^{q'}(3Q,B_2^*)}=c_Q.\]
\end{enumerate}
Now we can conclude
\begin{align*}
    |\langle T_j\vec{f},\vec{g}\,\rangle|=|a| &= |\sum_{Q\in\mathscr{Q}}a_Q|\leq \sum_{Q\in\mathscr{Q}}|a_Q|\leq \sum_{Q\in\mathscr{Q}}c_Q\\&=C_n\sum_{Q\in\mathscr{Q}}|3Q|\,\llangle \Vec{f}\rrangle_{\textit{\L}^p(3Q,B_1)}\cdot\llangle\Vec{g}\rrangle_{\textit{\L}^{q'}(3Q,B_2^*)}\\&=3^{d(\frac{1}{q'}-\frac{1}{p'})}A_\circ(p,q)\,n^{\frac{3}{2}}\sum_{Q\in\mathscr{Q}}|3Q|\,\llangle \Vec{f}\rrangle_{\textit{\L}^p(3Q,B_1)}\cdot\llangle\Vec{g}\rrangle_{\textit{\L}^{q'}(3Q,B_2^*)}.
\end{align*}
\end{proof}

\section{Estimating the full sum}
In this section we will prove Theorem \ref{mainresult}. To facilitate the proof we will need the following definitions.
\begin{dfntn}
    Given a dyadic cube $Q_0$ let 
    \[\mathfrak{G}_{Q_0}(\Vec{f},\Vec{g})=\sup\sum_{Q\in\mathfrak{S}} |3Q|\,\llangle \Vec{f}\rrangle_{\textit{\L}^p(3Q,B_1)}\cdot \llangle\Vec{g}\rrangle_{\textit{\L}^{q'}(3Q,B_2^*)},\]
    where the supremum is taken over all $\gamma$-sparse collections $\mathfrak{S}$ consisting of cubes in $\mathscr{D}(Q_0)$.
\end{dfntn}
\begin{dfntn}
    For $N\in \N$, let $U(N)$ be the smallest constant so that for all families of operators $\{T_j\}$ satisfying the assumptions \ref{supportCond}--\ref{restStrongLqbound} of Definition \ref{BRSop} for all pairs $(N_1,N_2)$ with $N_2-N_1\leq N$ and for all dyadic cubes $Q_0$ of side length $2^{N_2}$, we have
    \[|\langle T\Vec{f},\vec g\,\rangle|\leq U(N)\mathfrak{G}_{Q_0}(\Vec{f},\Vec{g})\numberthis\label{inductDefIneq}\]
    whenever $\vec f\in S_{B_1^n}$ with $\operatorname{supp}(\vec f)\subset Q_0^n$ and $\vec g\in S_{(B_2^n)^*}$.
\end{dfntn}
\noindent Note that, as a consequence of the single scale support condition \ref{supportCond}, in the above definition we can assume that  $\operatorname{supp}(\vec g)\subset 3Q_0^n$.
\begin{rmrk}\label{DegenToGen}
    By similar arguments as in \cite[Proof of Proposition 4.2]{hytonen_remarks_2023}, one can show that when pursuing inequalities that are of the form \eqref{inductDefIneq}, 
    we may assume that the John ellipsoid $\mathcal{E}_f$ of $\llangle f\rrangle_{\textit{\L}^p(3Q_0,B_1)}$ is non-degenerate. 
    
    Indeed, let $\mathcal{O}$ denote the orthogonal projection of $\R^
    n$ onto the linear span of $\llangle f\rrangle_{\textit{\L}^p(3Q_0,B_1)}$ that we denote by $H$. Now since $(\Vec{f},\Vec{g})\mapsto\langle T\Vec{f},\vec g\,\rangle$ defines a bilinear form, on the left-hand side we have
    \[
        \langle T\vec f,\vec g\rangle=\langle T\left[\mathcal{O}^\top\mathcal{O}\vec f\right],\vec g\rangle=\langle T\left[\mathcal{O}\vec f\right],\mathcal{O}\vec g\rangle.
    \]
    For the right-hand side we fix a $\gamma$-sparse collection $\mathfrak S\subset \mathscr D(Q_0)$ and calculate for $Q\in\mathfrak S$ that
    \begin{align*}
        \llangle \Vec{f}\rrangle_{\textit{\L}^p(3Q,B_1)}\cdot \llangle\Vec{g}\rrangle_{\textit{\L}^{q'}(3Q,B_2^*)}&=\llangle \mathcal{O}^\top\mathcal{O}\Vec{f}\rrangle_{\textit{\L}^p(3Q,B_1)}\cdot \llangle\Vec{g}\rrangle_{\textit{\L}^{q'}(3Q,B_2^*)} \\&= \llangle \mathcal{O}\Vec{f}\rrangle_{\textit{\L}^p(3Q,B_1)}\cdot \llangle\mathcal{O}\Vec{g}\rrangle_{\textit{\L}^{q'}(3Q,B_2^*)}.
    \end{align*}
    By summing over $Q\in\mathfrak S$ and taking supremums over $\gamma$-sparse collections in $\mathscr D(Q_0)$ we get \[\mathfrak G_{Q_0}(\vec f,\vec g)=\mathfrak G_{Q_0}(\mathcal{O}\vec f,\mathcal{O}\vec g).\]
    Thus we can assume that the functions $\vec f,\vec g$ are in $H$ and $\mathcal{E}_f$ is non-degenerate as a subset of $H$. 
\end{rmrk}

Our next goal is to show that
\begin{align*}
    U(N)\lesssim_{d,p,q,\gamma,\kappa} \mathcal{C}n^{\frac{3}{2}+\frac{1}{p}+\frac{1}{q'}}
\end{align*}
uniformly in $N\in\N$. The case $n=1$ is due to \cite{beltran_multi-scale_2020} and we adapt their argument in our setting. The proof will be done by induction on $N$ and Proposition \ref{singleScaleDomination} gives us the base case
\[U(0)\leq 3^{d(\frac{1}{q'}-\frac{1}{p'})}A_\circ(p,q)\,n^\frac{3}{2}.\]
The inductive step is included in the following lemma. Note that after the proof of the following lemma we are done since the rest of the proof of theorem \ref{mainresult} is similar to Section 4.2 from \cite{beltran_multi-scale_2020}.
\begin{lem}\label{inductiveLemma}
There is a constant $c=c_{d,p,q,\gamma,\kappa}$ such that for all $N>0$,
\[U(N)\leq \max\{U(N-1),\,c\,\mathcal{C}n^{\frac{3}{2}+\frac{1}{p}+\frac{1}{q'}}\}.\]
\end{lem}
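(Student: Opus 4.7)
The plan is to adapt the Calder\'on--Zygmund stopping-time induction of \cite[Section 4.2]{beltran_multi-scale_2020} to the tuple-valued setting, by using the John ellipsoid reduction of Lemma \ref{TH1} to convert convex body averages into scalar averages at exactly one point in the argument, namely where the weak-type bounds \ref{weakLpbound} and \ref{restStrongLqbound} are invoked. I fix $Q_0 \in \mathscr D$ with $\ell(Q_0) = 2^{N_2}$ and $\vec f \in S_{B_1^n}$ with $\operatorname{supp}(\vec f) \subset Q_0^n$, $\vec g \in S_{(B_2^*)^n}$, and assume via Remark \ref{DegenToGen} that $\mathcal E_f$ is non-degenerate. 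Applying Lemma \ref{TH1} at $3Q_0$ with $X = \textit{\L}^p(3Q_0,B_1)$ and $Y = \textit{\L}^{q'}(3Q_0, B_2^*)$, together with \eqref{matrixswapbili}, yields scalar pairs $(f_i,g_i)$ satisfying
\[
\langle T\vec f,\vec g\rangle = \sums{i}{1}{n} \langle T f_i, g_i\rangle, \qquad \sums{i}{1}{n} \|f_i\|_X \|g_i\|_Y \le n^{3/2}\, \llangle \vec f\rrangle_X \cdot \llangle \vec g\rrangle_Y,
\]
which localises the $n^{3/2}$ penalty to a single step.

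Next, for each $i$, I would run the scalar BRS inductive step from \cite{beltran_multi-scale_2020}: choose stopping thresholds $A_p \sim (n/\gamma)^{1/p}$ on $\|f_i\|_{\textit{\L}^p(3\cdot, B_1)}$ and $A_{q'} \sim (n/\gamma)^{1/q'}$ on $\|g_i\|_{\textit{\L}^{q'}(3\cdot, B_2^*)}$, and form the maximal dyadic subcubes $Q \subsetneq Q_0$ at which either threshold is exceeded, producing a scalar stopping family $\mathfrak S_i$ with exceptional set $\Omega_i = \bigcup_{Q\in\mathfrak S_i}Q$. The scalings are arranged so that $\sum_i |\Omega_i| \le \gamma |Q_0|$. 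On each $Q \in \mathfrak S_i$ with $\ell(Q) = 2^{j_Q}$, write $T = T_Q^\flat + T_Q^\sharp$ with $T_Q^\flat = \sum_{N_1 \le j < j_Q} T_j$; the operator $T_Q^\flat$ is itself a BRS operator with at most $N-1$ scales, so the inductive hypothesis bounds $|\langle T_Q^\flat(f_i\mathbbm 1_Q), g_i\mathbbm 1_{3Q}\rangle|$ by $U(N-1)\mathfrak G_Q(f_i\mathbbm 1_Q, g_i\mathbbm 1_{3Q})$. The remaining contributions---the good-set piece on $Q_0\setminus\Omega_i$ and the high-scale piece $T_Q^\sharp$---are estimated exactly as in the scalar BRS argument via Kolmogorov's inequality together with the weak-type bounds \ref{weakLpbound}--\ref{restStrongLqbound} and the regularity conditions \ref{singleScalereqcond}--\ref{adjSingleScalereqcond}, yielding a scalar bound of the form $c\,\mathcal C A_p A_{q'} |Q_0| \|f_i\|_X \|g_i\|_Y$.

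Summing over $i$, the induction terms assemble into $U(N-1)\mathfrak G_{Q_0}(\vec f, \vec g)$---here the scalar sparse families $\mathfrak S_i$ are merged into a common tuple-sparse family using the control $\sum_i|\Omega_i| \le \gamma|Q_0|$---while the remaining terms combine via the John ellipsoid inequality $\sum_i\|f_i\|_X\|g_i\|_Y \le n^{3/2}\llangle \vec f\rrangle_X \cdot \llangle\vec g\rrangle_Y$ and $A_p A_{q'} \sim n^{1/p+1/q'}$ into at most
\[
c\,\mathcal C n^{\frac32+\frac1p+\frac1{q'}} |Q_0| \llangle \vec f\rrangle_X \cdot \llangle \vec g\rrangle_Y \le c\,\mathcal C n^{\frac32+\frac1p+\frac1{q'}} \mathfrak G_{Q_0}(\vec f,\vec g),
\]
which delivers the claimed $U(N) \le \max\{U(N-1), c\,\mathcal C n^{3/2 + 1/p + 1/q'}\}$. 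The principal technical hurdle is the bookkeeping required to merge the $n$ scalar sparse collections $\mathfrak S_i$ into a single $\gamma$-sparse tuple family; this is what forces $A_p, A_{q'}$ to grow as $n^{1/p}, n^{1/q'}$, producing the extra factor $n^{1/p+1/q'}$ on top of the $n^{3/2}$ inherited from John's theorem.
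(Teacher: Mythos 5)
Your high-level ingredients are the right ones — Remark \ref{DegenToGen}, the John ellipsoid reduction of Lemma \ref{TH1}, thresholds scaling like $n^{1/p}$ and $n^{1/q'}$ to control all $n$ components with one small exceptional set, a Calder\'on--Zygmund decomposition, and the BRS weak-type/regularity estimates for the remainder — but there is a structural gap in where you apply the inductive hypothesis. You propose running the stopping-time recursion \emph{separately for each scalar index $i$}, obtaining $n$ scalar sparse families $\mathfrak S_i$ and then ``merging'' them into one tuple-sparse family. This merging cannot be carried out, and it is the crux of the matter. The issue is not merely bookkeeping: the quantity you would get on each cube $Q$ from the scalar recursion is $\sum_i \|f_i\|_{\textit{\L}^p(3Q,B_1)}\|g_i\|_{\textit{\L}^{q'}(3Q,B_2^*)}$, and there is no inequality of the form
\[
\sum_{i=1}^{n}\|f_i\|_{\textit{\L}^p(3Q,B_1)}\|g_i\|_{\textit{\L}^{q'}(3Q,B_2^*)}\ \lesssim_n\ \llangle\vec f\rrangle_{\textit{\L}^p(3Q,B_1)}\cdot\llangle\vec g\rrangle_{\textit{\L}^{q'}(3Q,B_2^*)}
\]
available, because $R_f$ was chosen to straighten the John ellipsoid of $\llangle\vec f\rrangle$ at the top scale $3Q_0$, not at the smaller scale $3Q$; Lemma \ref{TH1} applies only when the coordinates are adapted to the convex body of the space in question. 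So the component sums at subordinate cubes simply do not reassemble into convex-body dot products, and the scalar inductive bounds cannot be promoted to a vector one.

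The paper sidesteps this entirely by keeping the recursion \emph{at the vector level}. After forming a \emph{single} exceptional set $\Omega=\bigcup_i(\Omega_i^{(f)}\cup\Omega_i^{(g)})$ and a \emph{single} Whitney decomposition $\mathcal W$ of $\Omega$, the inductive hypothesis $U(N-1)$ is applied directly to the vector-valued pair $(\vec f\mathbbm 1_P,\vec g)$ for each Whitney cube $P\subset Q_0$. Since the $P$'s are pairwise disjoint, the resulting $\gamma$-sparse families $\mathfrak S_{P,\varepsilon}\subset\mathscr D(P)$ union up, together with $\{Q_0\}$, to a single $\gamma$-sparse collection — no merging of $n$ incompatible families is needed, and no $n^{3/2}$ enters at this step. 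The John ellipsoid lemma is invoked only once, at the very end, to sum the \emph{non-recursive remainder} $\langle T\vec f,\vec g\rangle-\sum_P\langle S_P\vec f,\vec g\rangle$, which is estimated component-by-component (inequality \eqref{TheClaim}, with the $n^{1/p+1/q'}$ factor from the thresholds) and then assembled via Lemma \ref{TH1} at scale $3Q_0$, where the lemma is actually applicable. A second, smaller consequence of using per-component sets $\Omega_i$ rather than the common $\Omega$ is that you would lose the single Whitney structure that the bad–bad pairing estimates (the terms $III_3,III_4$ in the appendix, which rely on (4.20) from \cite{beltran_multi-scale_2020}) depend on.
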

\begin{proof}
Let $Q_0$ be a dyadic cube of side length $2^{N_2}$ and  $\mathcal{M}_pf=(\mathcal{M}|f|^p)^{1/p}$, where $\mathcal{M}$ is the Hardy-Littlewood maximal operator. 
By Remark \ref{DegenToGen}, we may assume that the John ellipsoid $\mathcal{E}_f$ of the convex body $\llangle \vec f\rrangle_{\textit{\L}^p(Q_0,B_1)}$ is non-degenerate and consider the linear transformation $R_f$ that maps $\mathcal{E}_f$ to the closed unit ball of $\R^n$. For the rest of the proof we denote $f_i\coloneqq R_f\vec f\cdot \vec e_i$ and $g_i\coloneqq R_f^{-\top}\vec g\cdot \vec e_i$. Note that we have
\[
    \langle T\Vec{f},\vec g\,\rangle=\left\langle T\Big[R_f^{-1}R_f\Vec{f}\Big],\Vec{g}\right\rangle=\left\langle T\Big[R_f\Vec{f}\Big],R_f^{-\top}\Vec{g}\right\rangle=\sums{i}{1}{n}\langle Tf_i,g_i\rangle.
\]

Then we define $\Omega_i\coloneqq\Omega_i^{(f)}\cup\Omega_i^{(g)}$, where
\[
    \Omega_i^{(f)}\coloneqq\left\{x\in3Q_0\,\colon\,\mathcal{M}_pf_i(x)>\Big(\frac{100^dn}{1-\gamma}\Big)^\frac{1}{p}\| f_i\|_{\textit{\L}^p(Q_0,B_1)}\right\}
\] 
and
\[
    \Omega_i^{(g)}\coloneqq\left\{x\in3Q_0\,\colon\,\mathcal{M}_{q'}g_i(x)>\Big(\frac{100^dn}{1-\gamma}\Big)^\frac{1}{q'}\| g_i\|_{\textit{\L}^{q'}(3Q_0,B_2^*)}\right\}.
\]
We write $\Omega\coloneqq\bigcup_{i=1}^n\Omega_i$ and note that by the $L^1\to L^{1,\infty}$ boundedness of $\mathcal{M}$ we have
\begin{align*}|\Omega|\leq \sums{i}{1}{n}|\Omega_i^{(f)}|+|\Omega_i^{(g)}|&\leq \sums{i}{1}{n}5^d\frac{1-\gamma}{100^dn}(|Q_0|+3^d|Q_0|)\\&=\frac{3^d+1}{20^dn}(1-\gamma)|Q_0|<(1-\gamma)|Q_0|\end{align*}
and by setting $E_{Q_0}\coloneqq Q_0\setminus\Omega$ we have $|E_{Q_0}|\geq \gamma |Q_0|$.

We perform a Whitney decomposition $\mathcal{W}$ of $\Omega$ consisting of disjoint dyadic cubes $P$ with side length $2^{L(P)}\in2^\Z$, so that
\[
    5\operatorname{diam}(P)\leq \operatorname{dist}(P,\Omega^\complement)\leq 12\operatorname{diam}(P),\quad\text{for all } P\in\mathcal{W}.
\] 
We set for $i=1,\dots,n,$
\begin{align*}
    b_{i,P}=(f_i-\fint_P f_i)\mathbbm{1}_P,\qquad
    b_i=\sum_{\substack{P\in\mathcal{W}\\P\subset Q_0}}b_{i,P},
\end{align*}
and
\begin{align*}
    h_i\coloneqq f_i\mathbbm{1}_{\Omega^\complement}+\sum_{\substack{P\in\mathcal{W}\\P\subset Q_0}}\mathbbm{1}_P\fint_P f_i
\end{align*}
We were able to take cubes $P$ that satisfy $P\subset Q_0$, since $\operatorname{supp}(f_i)\subset Q_0$. Indeed, if $P\cap Q_0\neq \emptyset$, then $P$ is a true subset of $Q_0$. Otherwise, we would have $Q_0\subset P$ as $Q_0$ and $P$ are dyadic, but this leads to \[|Q_0|\leq |P|\leq |\Omega|<(1-\gamma)|Q_0|,\] which is a contradiction.
Now we have the Calderon-Zygmund decompositions $f_i=h_i+b_i$ and it follows that
\[\langle T\Vec{f},\vec g\,\rangle=\langle T\Vec{h},\vec g\,\rangle+\langle T\Vec{b},\vec g\,\rangle .\]
For a dyadic cube $Q$ with side length $2^{L(Q)}\in2^\Z$, we define \[S_Qf\coloneqq\sum_{N_1\leq j\leq L(Q)}T_j[f\mathbbm{1}_Q].\] 
Note that for small enough $P$ the function $f_i$ will be a constant on $P$, which implies that $b_{i,P}=0$.
Thus without issues of convergence we can write
\begin{align*}
    \langle T\Vec{b},\vec g\,\rangle&=\sums{i}{1}{n} \langle Tb_i,g_i\rangle=\sum_{P\in\mathcal{W}}\sums{i}{1}{n} \langle Tb_{i,P},g_i\rangle\\
    &= \sum_{P\in\mathcal{W}}\sums{i}{1}{n}\bigg( \langle S_Pf_i,g_i\rangle-\langle S_P\Big[\mathbbm{1}_P\fint_Pf_i\Big],g_i\rangle\\&\qquad\qquad\qquad\qquad\qquad\qquad+\langle (T-S_P)b_{i,P},g_i\rangle\bigg)
    \\&\eqqcolon I+II+III.
\end{align*}

We handle the first term $I$ with the lower scale quantity $U(N-1)$. Indeed, since each $P$ that contributes a non-zero summand  in the first term is a proper subcube of $Q_0$, we have $L(P)-N_1\leq N-1$ and hence by definition
\[|\langle S_P\Vec{f},\vec g\,\rangle|\leq U(N-1)\mathfrak{G}_P(\Vec{f}\mathbbm{1}_P,\Vec{g}).\]
Meaning that, given any $\varepsilon>0$, there exists a $\gamma$-sparse collection $\mathfrak{S}_{P,\varepsilon}\subset \mathscr{D}(P)$, such that
\[
    |\langle S_P\Vec{f},\vec g\,\rangle| \leq (U(N-1)+\varepsilon)\sum_{Q\in \mathfrak{S}_{P,\varepsilon}}|3Q|\llangle \Vec{f}\rrangle_{\textit{\L}^p(3Q,B_1)}\cdot\llangle\Vec{g}\rrangle_{\textit{\L}^{q'}(3Q,B_2^*)}.
\]
We combine these collections
\[\mathfrak{S}'_\varepsilon\coloneqq \bigcup_{\substack{P\in\mathcal{W}\\P\subset Q_0}}\mathfrak{S}_{P,\varepsilon},\]
and get
\begin{align}|I|&\leq (U(N-1)+\varepsilon)\sum_{Q\in \mathfrak{S}'_{\varepsilon}}|3Q|\llangle \Vec{f}\rrangle_{\textit{\L}^p(3Q,B_1)}\cdot\llangle\Vec{g}\rrangle_{\textit{\L}^{q'}(3Q,B_2^*)}.\numberthis\label{badOneEst}\end{align}

Note that
\begin{align*}
    |\langle T\Vec{h},\vec g\,\rangle+ II+III|=|\langle T\Vec{f},\vec g\,\rangle-I|&=|\langle T\Vec{f},\vec g\,\rangle- \sum_{P\in\mathcal{W}}\langle S_P\Vec{f},\vec g\,\rangle|.
\end{align*}

\textbf{Claim:} For all $i=1,\dots,n$ we have
\begin{align*}
    |\langle Tf_i, g_i\rangle- \sum_{P\in\mathcal{W}}&\langle S_Pf_i,g_i\rangle|\\&\lesssim_{d,p,q,\gamma,\kappa} \mathcal{C}n^{\frac{1}{p}+\frac{1}{q'}}\,|3Q_0|\,\|f_i\|_{\textit{\L}^p(3Q_0,B_1)}\|g_i\|_{\textit{\L}^{q'}(3Q_0,B_2^*)}\numberthis\label{TheClaim}.
\end{align*}
Taking the above claim for granted and applying Lemma \ref{TH1}, we obtain 
\begin{align*}
    |\langle T\Vec{f},\vec g\,\rangle-\sum_{P\in\mathcal{W}}\langle S_P\Vec{f},&\vec g\,\rangle|\leq \sums{i}{1}{n}|\langle T f_i, g_i\rangle-\sum_{P\in\mathcal{W}}\langle S_P f_i,g_i\rangle|\\&\lesssim_{d,p,q,\gamma,\kappa}\mathcal{C}n^{\frac{1}{p}+\frac{1}{q'}}\,|3Q_0|\,\sums{i}{1}{n}\| f_i\|_{\textit{\L}^p(3Q_0,B_1)}\|g_i\|_{\textit{\L}^{q'}(3Q_0,B_2^*)}
    \\&\leq \mathcal{C}n^{\frac{3}{2}+\frac{1}{p}+\frac{1}{q'}}\,|3Q_0|\,\llangle \Vec{f}\rrangle_{\textit{\L}^p(3Q_0,B_1)}\cdot\llangle \Vec{g}\rrangle_{\textit{\L}^{q'}(3Q_0,B_2^*)}.\numberthis\label{normToBody}
\end{align*}

Applying \eqref{badOneEst} and \eqref{normToBody} we see that with the collection \[\mathfrak{S}_{\varepsilon}\coloneqq\{Q_0\}\cup\mathfrak{S}'_{\varepsilon},\] which is a $\gamma$-sparse collection of cubes in $\mathscr{D}(Q_0)$, we may estimate
\begin{align*}
    |\langle T\vec f,\vec g\rangle|&\leq |I|+|\langle T\vec f,\vec g\rangle-I|\\
    &\leq \max\{U(N-1)+\varepsilon,\mathcal{C}n^{\frac{3}{2}+\frac{1}{p}+\frac{1}{q'}}\}\sum_{Q\in \mathfrak{S}_{\varepsilon}}|3Q|\llangle \Vec{f}\rrangle_{\textit{\L}^p(3Q,B_1)}\cdot\llangle\Vec{g}\rrangle_{\textit{\L}^{q'}(3Q,B_2^*)}\\
    &\leq\max\{U(N-1)+\varepsilon,\mathcal{C}n^{\frac{3}{2}+\frac{1}{p}+\frac{1}{q'}}\}\,\mathfrak{G}_{Q_0}(\Vec{f},\Vec{g})
\end{align*}
and letting $\varepsilon\to0$ concludes the proof.
\end{proof}
We still need to prove inequality \eqref{TheClaim}, i.e., the claim that we made during the proof of the above lemma. The claim is almost identical to a step in the proof of the main result in \cite{beltran_multi-scale_2020}. The difference being that we have to control $n$ possibly distinct functions with a single set $\Omega$. The proof is quite long and  dependence on $n$ is arguably the only novelty in our situation. On the other hand the dependence on $n$ does play a role and it will appear throughout the proof. Thus for the readers convenience we will include a proof in appendix A, see Proposition \ref{appendixProp}. 

As we previously mentioned, now that we have proven Lemma \ref{inductiveLemma}, the rest of the proof of Theorem \ref{mainresult} is similar to Section 4.2 of \cite{beltran_multi-scale_2020}.

\section{Matrix \texorpdfstring{$A_p$}{Ap} weights and norm inequalities}
A matrix weight is a locally integrable function $W \colon \R^d \to \R^{
n\times n}$ that is almost everywhere
positive definite -valued. For a Banach space $B$ the space
$L^r_{B^n}(W)$ consists of all measurable $\vec f \colon \R^d \to B^n$ such that $W^\frac{1}{r} \vec f \in L^r_{B^n}(\R^d)$,
and $\|
\vec f\|_{L^r_{B^n}(W)}
\coloneqq \|W^\frac{1}{r} \vec f\|_{L^r_{B^n}(\R^d)}$. For a matrix weight $W \colon \R^d \to \R^{
n\times n}$ and $1<r<\infty$, we
use the definition introduced in \cite{roudenko_matrix-weighted_2002}, i.e.,
\[
    [W]_{A_r}\coloneqq\sup\limits_{Q}\fint_Q\left(\fint_Q\Big|W^{\frac{1}{r}}(x)W^{-\frac{1}{r}}(y)\Big|^{r'}_{op}\intD y\right)^\frac{r}{r'}\intD x
\]
and say that $W\in A_r$ if $[W]_{A_r}<\infty$. It is known that with $\Sigma=W^{-\frac{r'}{r}}$ we have
\[
    [W]_{A_r}^\frac{1}{r}\eqsim_{n,r}[\Sigma]_{A_{r'}}^\frac{1}{r'}.
\]
See \cite[proof of Lemma 3.2]{domelevo_boundedness_2021} for more details on the above fact.
For $y\in\R^n$ we also define the associated scalar weights 
\[
    W^{Sc}_{r,y}(x)\coloneqq \left|W^\frac{1}{r}(x)y\right|^r.
\]
In \cite[Corollary 2.2]{goldberg_matrix_2003} it was shown that for any $y\in \R^n$ the scalar weight $W^{Sc}_{r,y}$ is in $A_r$ and the weight constant satisfies \[[W]^{Sc}_{A_r}\coloneqq\sup_{y\in\R^n}[W^{Sc}_{r,y}]_{A_r}\lesssim_{n,r} [W]_{A_r}.\] 

As an application of the John ellipsoid theorem, one can reduce a norm $\rho$ in $\R^n$ to an ellipsoid. More explicitly, there exists a self-adjoint matrix $A$ such that $\rho(x)\eqsim_n |Ax|$ for every $x\in \R^n$. Such a matrix $A$ is called the reducing matrix (or operator) of $\rho$. We will make great use of the reducing matrices $A_{Q,r}$ and $B_{Q,r}$ that give 
\[
    |A_{Q,r}x|\eqsim_{n}\left(\fint_Q\left|W^\frac{1}{r}(y)x\right|^r\intD y\right)^\frac{1}{r}
\]
and
\[
    |B_{Q,r}x|\eqsim_{n}\left(\fint_Q\left|W^{-\frac{1}{r}}(y)x\right|^{r'}\intD y\right)^\frac{1}{r'},
\]
for all $x\in\R^n$. In \cite[proof of Lemma 1.3]{roudenko_matrix-weighted_2002} it is shown that 
\[
    |A_{Q,r}B_{Q,r}|_{op}\eqsim_{n,r}[W]_{A_r}^\frac{1}{r}.
\]
The following lemma is essentially Proposition 2.4 from \cite{goldberg_matrix_2003}. 

\begin{lem}\label{redOpRevHölderLemma}
    Assume that $W\in A_r$ and let $\Sigma=W^{-\frac{r'}{r}}$. Let $A_Q\coloneqq A_{Q,r}$ 
    and $B_Q\coloneqq B_{Q,r}$.
    Then with $\delta = \frac{1}{2^{d+1}[W]^{Sc}_{A_r}-1}$ and $\sigma=\frac{1}{2^{d+1}[\Sigma]^{Sc}_{A_{r'}}-1}$ we have
    \[
        \sup_Q \left(\fint_Q\left|A_Q^{-1}W^\frac{1}{r}(x)\right|_{op}^{r(1+\delta)}\intD x\right)^\frac{1}{r(1+\delta)} \lesssim_{n} 1
    \]
    and 
    \[
        \sup_Q \left(\fint_Q\left|B_Q^{-1}W^{-\frac{1}{r}}(x)\right|_{op}^{r'(1+\sigma)}\intD x\right)^\frac{1}{r'(1+\sigma)} \lesssim_{n}1.
    \]
\end{lem}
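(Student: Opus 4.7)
The plan is to reduce the matrix-valued reverse Hölder estimate to the sharp Hyt\"onen--P\'erez reverse Hölder inequality for \emph{scalar} $A_r$ weights via the spectral decomposition of the reducing matrix $A_Q$ together with a trace identity. I focus on the first displayed inequality; the second follows by duality.

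Fix $Q\in\mathscr D$ and note that $A_Q\coloneqq A_{Q,r}$ is self-adjoint positive definite. Let $(v_i)_{i=1}^n$ be an orthonormal eigenbasis of $A_Q$ with eigenvalues $\lambda_i=|A_Qv_i|>0$. Combining $|M|_{op}^2\le|M|_{HS}^2=\operatorname{tr}(MM^\top)$ with the cyclicity of the trace I obtain the pointwise bound
\[
|A_Q^{-1}W^{\frac{1}{r}}(x)|_{op}^2\;\le\;\operatorname{tr}\!\bigl(A_Q^{-2}W^{\frac{2}{r}}(x)\bigr)\;=\;\sums{i}{1}{n}\frac{|W^{\frac{1}{r}}(x)v_i|^2}{|A_Qv_i|^2}.
\]
This step replaces the operator norm by an $\ell^2$-sum of \emph{scalar} quantities along the eigendirections of $A_Q$, which is what keeps the dimensional dependence polynomial (a crude net-argument on the unit sphere would cost constants exponential in $n$).

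Next, for each fixed unit vector $v_i$ the scalar weight $W^{Sc}_{r,v_i}$ lies in $A_r$ with constant at most $[W]^{Sc}_{A_r}$. The sharp scalar reverse H\"older inequality (Hyt\"onen--P\'erez) with exponent $1+\delta$, $\delta=(2^{d+1}[W]^{Sc}_{A_r}-1)^{-1}$, combined with the defining property of the reducing matrix gives
\[
\left(\fint_Q|W^{\frac{1}{r}}(x)v_i|^{r(1+\delta)}\intD x\right)^{\!\frac{1}{1+\delta}}\!\!\lesssim\fint_Q|W^{\frac{1}{r}}(x)v_i|^{r}\intD x\;\eqsim_n\;|A_Qv_i|^r.
\]
Raising the pointwise estimate above to the power $r(1+\delta)/2$ and applying either Jensen (when $r(1+\delta)\ge2$) or subadditivity of $t\mapsto t^{r(1+\delta)/2}$ (when $r(1+\delta)<2$) across the $n$ eigendirections reduces the averaged $r(1+\delta)$-integral to a sum of $n$ scalar averages, each normalised by $|A_Qv_i|^{r(1+\delta)}$ and thus bounded by a purely dimensional constant. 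Extracting the $r(1+\delta)$-th root yields the first claim with constants independent of $Q$.

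The second inequality is obtained by the observation that $\Sigma^{1/r'}=W^{-1/r}$, so $B_{Q,r}$ is exactly the reducing matrix $A_{Q,r'}$ associated to the weight $\Sigma$. Applying the argument above with $(W,r,\delta)$ replaced by $(\Sigma,r',\sigma)$ produces the required bound. The main obstacle is really the first (trace) step: it is essential to perform the operator-to-scalar reduction in the eigenbasis of $A_Q$ so that the normalising factors $|A_Qv_i|^{-r(1+\delta)}$ exactly cancel against the scalar reverse H\"older bounds; the rest of the argument is a routine combination of sharp scalar $A_r$ theory and the defining property of $A_Q$ and $B_Q$.
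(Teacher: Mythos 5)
Your proposal is correct. Note first that the paper does not actually prove this lemma; it only cites Proposition~2.4 of Goldberg's paper, so there is no internal argument to compare against. Your proof fills that gap with a self-contained argument, and the logic checks out: the trace reduction $|A_Q^{-1}W^{1/r}(x)|_{op}^2 \le \operatorname{tr}(A_Q^{-2}W^{2/r}(x)) = \sum_i |W^{1/r}(x)v_i|^2/|A_Qv_i|^2$ is valid because $|M|_{op}\le|M|_{HS}$ and trace is cyclic; each $W^{Sc}_{r,v_i}$ is a scalar $A_r$ weight with constant at most $[W]^{Sc}_{A_r}$, so its Fujii--Wilson $A_\infty$ constant is at most $[W]^{Sc}_{A_r}$ and the sharp Hyt\"onen--P\'erez reverse H\"older inequality is available at the exponent $1+\delta$ (which, being smaller than the optimal exponent, is harmless by H\"older); the Jensen/subadditivity split handles $r(1+\delta)\gtrless 2$ and only contributes polynomial-in-$n$ constants; and the defining property $(\fint_Q|W^{1/r}v_i|^r)^{1/r}\eqsim_n|A_Qv_i|$ cancels the normalisation exactly. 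The duality step for the second inequality is also right: $\Sigma^{1/r'}=W^{-1/r}$ shows that $B_{Q,r}$ is (up to the $\eqsim_n$ ambiguity inherent in reducing matrices) the $A$-type reducing matrix for $(\Sigma,r')$, so the first estimate applied to $(\Sigma,r',\sigma)$ gives the second. This is the expected eigendirection-plus-scalar-RHI route, essentially the mechanism behind Goldberg's result; your contribution is to run it with the sharp scalar constant so that the specific $\delta=(2^{d+1}[W]^{Sc}_{A_r}-1)^{-1}$ stated in the lemma comes out directly.
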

\noindent The following classical inequality on operator norm of powers of matrices will turn out to be helpful.
\begin{lem}[Cordes inequality, \cite{cordes_spectral_1987}]\label{Cordesineq}
    Let $A$ and $B$ be self-adjoint positive definite matrices and $0<\alpha<1$. Then 
    \[
        \left|A^\alpha B^\alpha\right|_{op}\leq |AB|_{op}^\alpha.
    \]
\end{lem}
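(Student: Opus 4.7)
The plan is to prove the Cordes inequality in three stages.

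First, I would reduce to the case where $A$ and $B$ are strictly positive definite by replacing them with $A+\epsilon I$ and $B+\epsilon I$ and letting $\epsilon\to 0^+$. Both sides of the inequality are continuous in $\epsilon$, and the functional calculus $X\mapsto X^\alpha$ is continuous on the positive cone, so this reduction is harmless and lets me work with $A^\alpha, B^\alpha$ as well-defined bounded positive operators via the spectral theorem.

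Second, I would establish the base case $\alpha = 1/2$ by a direct spectral computation. Setting $X = A^{1/2} B^{1/2}$, the identity $|X|_{op}^2 = |X^*X|_{op}$ gives
\[
|A^{1/2} B^{1/2}|_{op}^2 = |B^{1/2} A B^{1/2}|_{op}.
\]
Since $B^{1/2} A B^{1/2}$ is positive self-adjoint, its operator norm equals its spectral radius $r(\cdot)$, and the invariance $r(YZ)=r(ZY)$ applied with $Y = B^{1/2}$ and $Z = B^{1/2}A$ yields
\[
|B^{1/2} A B^{1/2}|_{op} = r(B^{1/2} A B^{1/2}) = r(AB) \leq |AB|_{op}.
\]

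Third, iterating the base case with $A^{1/2}, B^{1/2}$ in the roles of $A, B$ produces the inequality at every dyadic $\alpha = 2^{-n}$, since $|A^{1/4}B^{1/4}|_{op}^2 \leq |A^{1/2}B^{1/2}|_{op} \leq |AB|_{op}^{1/2}$ and so on. To extend to arbitrary $\alpha \in (0,1)$, I would establish log-convexity of $\alpha \mapsto \log|A^\alpha B^\alpha|_{op}$ on $[0,1]$ via complex interpolation applied to the operator-valued analytic family $F(z) = A^z B^z$ on the strip $0 \leq \mathrm{Re}(z) \leq 1$. Since $\log|A^0 B^0|_{op} = 0$, convexity would deliver $\log|A^\alpha B^\alpha|_{op} \leq \alpha \log|AB|_{op}$ for every $\alpha \in [0,1]$.

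The main obstacle is this last step: a naive three-lines argument only controls $|F(1+it)|_{op}$ by $|A|_{op}|B|_{op}$, not the sharper $|AB|_{op}$, because $F(1+it) = A \cdot (A^{it}B^{it}) \cdot B$ involves a unitary twist $A^{it}B^{it}$ between $A$ and $B$ that cannot be absorbed, so interpolating naively yields only $|A^\alpha B^\alpha|_{op} \leq (|A|_{op}|B|_{op})^\alpha$. The sharpening to $|AB|_{op}^\alpha$ is classically achieved either by combining the dyadic iteration with the L\"owner-Heinz operator monotonicity theorem ($0 \leq S \leq T \Rightarrow S^r \leq T^r$ for $r\in[0,1]$), applied to $S = A^\alpha B^{2\alpha}A^\alpha$ and $T = (AB^2A)^\alpha$, or by a more refined Stein-type interpolation, and either route can be read off from the classical reference supplied in the statement.
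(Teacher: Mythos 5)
The paper does not prove this lemma; it is cited directly to Cordes' book, so there is no in-paper proof to compare your argument against. Evaluated on its own merits, your proposal is sound through Stage 2 (the reduction in Stage 1 is unnecessary since the matrices are already assumed positive definite, hence invertible, but it does no harm, and the $\alpha=1/2$ computation via $|X|_{op}^2=|X^*X|_{op}$ and spectral-radius commutativity is correct). The difficulty is entirely in Stage 3, and you correctly identify that the naive three-lines argument fails, but the repair you sketch is not quite right.

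First, the dyadic iteration only reaches $\alpha=2^{-n}$, a set with no interior, so continuity of $\alpha\mapsto|A^\alpha B^\alpha|_{op}$ does not propagate the inequality to all of $(0,1)$; one needs an argument that directly handles a dense or full set of exponents. Second, the specific L\"owner--Heinz application you name, aiming at $A^\alpha B^{2\alpha}A^\alpha \leq (AB^2A)^\alpha$, is not a consequence of L\"owner--Heinz; that comparison is an Araki--Lieb--Thirring--type statement which holds at the level of eigenvalue (log-)majorization and traces, not as an operator inequality, and L\"owner--Heinz would not produce it. The clean route, which does use L\"owner--Heinz and handles every $\alpha\in(0,1)$ in one stroke, is this: normalize $|AB|_{op}=1$, so that $BA^2B\leq I$; conjugating by $B^{-1}$ gives $A^2\leq B^{-2}$; L\"owner--Heinz ($0\leq S\leq T\Rightarrow S^\alpha\leq T^\alpha$ for $\alpha\in[0,1]$) yields $A^{2\alpha}\leq B^{-2\alpha}$; conjugating by $B^\alpha$ gives $B^\alpha A^{2\alpha}B^\alpha\leq I$; and therefore $|A^\alpha B^\alpha|_{op}^2=|B^\alpha A^{2\alpha}B^\alpha|_{op}\leq1$. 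Your base-case computation is then subsumed rather than needed as a separate step. As written, Stage 3 contains a genuine gap that the L\"owner--Heinz theorem fills only when applied in this different way.
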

 \noindent We will also need the following reverse Hölder class.
\begin{dfntn}\label{RHtsclass}
    A matrix weight $W$ is a $RH_{t,s}$ weight, if \[[W]_{RH_{t,s}}\coloneqq\sup_{y\in\R^n}\sup_{Q}\left(\fint_Q\left|W^\frac{1}{t}(x)y\right|^{ts}\intD x\right)^\frac{1}{s}\left(\fint_Q\left|W^\frac{1}{t}(x)y\right|^{t}\intD x\right)^{-1}<\infty.\] 
\end{dfntn} 
\noindent Another way of formulating the above definition is to say that $W\in RH_{t,s}$ if $W^{Sc}_{t,y}$ is in the scalar reverse Hölder class $RH_s$ uniformly in $y$. This motivates the following lemma.

\begin{lem}\label{scarevHölrmrk}
    Let $w$ be a scalar weight and $s>1$. Then 
    \begin{align}\label{RHequiAinftyone}
    w\in RH_s \quad\Leftrightarrow \quad w^s\in A_\infty.
    \end{align} Furthermore,
    \begin{align}\label{RHequiAinftytwo}
        \frac{[w^s]_{A_\infty}^\frac{1}{s}}{[w]_{A_\infty}}\leq [w]_{RH_s}\leq [w^s]_{A_\infty}^\frac{1}{s}.
    \end{align}
\end{lem}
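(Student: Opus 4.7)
The plan is to use the Garcia--Cuerva expression for the scalar $A_\infty$ constant,
\[
[v]_{A_\infty}=\sup_Q\left(\fint_Q v\intD x\right)\exp\left(-\fint_Q\log v\intD x\right),
\]
under which both inequalities in \eqref{RHequiAinftytwo} reduce to short manipulations via Jensen's inequality, and the equivalence \eqref{RHequiAinftyone} then follows at once.

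First I would prove the upper bound $[w]_{RH_s}\leq[w^s]_{A_\infty}^{1/s}$. Jensen's inequality gives $\fint_Q w\geq\exp(\fint_Q\log w)$, so for any cube $Q$
\[
\frac{(\fint_Q w^s)^{1/s}}{\fint_Q w}\leq\left(\fint_Q w^s\right)^{1/s}\exp\left(-\fint_Q\log w\right)=\left(\left(\fint_Q w^s\right)\exp\left(-\fint_Q\log w^s\right)\right)^{1/s}.
\]
Taking the supremum over $Q$ identifies the right-hand side with $[w^s]_{A_\infty}^{1/s}$.

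For the lower bound $[w^s]_{A_\infty}^{1/s}\leq[w]_{RH_s}[w]_{A_\infty}$, I would start from the identity
\[
[w^s]_{A_\infty}^{1/s}=\sup_Q\left(\fint_Q w^s\right)^{1/s}\exp\left(-\fint_Q\log w\right),
\]
apply the defining reverse Hölder inequality $(\fint_Q w^s)^{1/s}\leq[w]_{RH_s}\fint_Q w$ to the first factor, and then bound the remaining product $(\fint_Q w)\exp(-\fint_Q\log w)$ by $[w]_{A_\infty}$.

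Finally, the equivalence \eqref{RHequiAinftyone} drops out of \eqref{RHequiAinftytwo}: the upper half gives $w^s\in A_\infty\Rightarrow w\in RH_s$ immediately, and for the converse I would invoke the classical inclusion $RH_s\subset A_\infty$ to ensure that $[w]_{A_\infty}<\infty$, after which the lower half forces $[w^s]_{A_\infty}<\infty$. The only delicate point is the choice of $A_\infty$ characterisation, since several comparable but non-identical definitions appear in the literature; fixing the Garcia--Cuerva version above is what makes the explicit constants in \eqref{RHequiAinftytwo} come out exactly, but beyond that no serious technical obstacle is expected.
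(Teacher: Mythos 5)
Your proof is correct. The paper does not actually prove this lemma; it simply cites Str\"omberg--Wheeden for \eqref{RHequiAinftyone} and Johnson--Neugebauer for \eqref{RHequiAinftytwo}. Your argument via the Garcia--Cuerva exponential characterisation
\[
[v]_{A_\infty}=\sup_Q\Big(\fint_Q v\Big)\exp\Big(-\fint_Q\log v\Big)
\]
is the standard one (and is essentially the content of the cited Johnson--Neugebauer Lemma~3.1). Both directions of \eqref{RHequiAinftytwo} check out: the upper bound is Jensen applied to $\fint_Q w\geq\exp(\fint_Q\log w)$, the lower bound is the defining $RH_s$ inequality applied inside the supremum for $[w^s]_{A_\infty}^{1/s}$. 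The one external input you correctly flag is the classical inclusion $RH_s\subset A_\infty$, needed for the converse direction of \eqref{RHequiAinftyone} (the rearranged lower bound $[w^s]_{A_\infty}^{1/s}\leq[w]_{RH_s}[w]_{A_\infty}$ only forces finiteness of $[w^s]_{A_\infty}$ once $[w]_{A_\infty}<\infty$ is known). You are also right that the explicit form of \eqref{RHequiAinftytwo} is tied to this particular choice of $A_\infty$ constant; the paper does not pin down its convention, but its later uses (comparing $[w]_{A_\infty}$ to $[w]_{A_r}$ and to a reverse H\"older gain) are consistent with the Garcia--Cuerva characterisation you adopt.
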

\noindent See \cite[Corollary 6.2]{stromberg_fractional_1985} for the proof of \eqref{RHequiAinftyone} and \cite[Lemma 3.1]{johnson_homeomorphisms_1987} for the proof of \eqref{RHequiAinftytwo}. 

We also highlight an elementary inequality for positive numbers, which will make some computations more presentable.
\begin{lem}\label{numbersLemma}
    Let $\alpha,\beta\in \left[1,\infty\right[$, $\eta\in\left]0,1\right[$ and $\lambda=\alpha(\beta'(1+\eta))'$. Then 
    \[
        \left(\frac{1}{\eta}\right)^\frac{1}{\lambda}\lesssim \left(\frac{1}{\eta}\right)^\frac{1}{\alpha\beta}.
    \]
\end{lem}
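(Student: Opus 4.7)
The plan is to reduce the inequality to showing that the exponent difference $\frac{1}{\lambda} - \frac{1}{\alpha\beta}$ is controlled by a small multiple of $\eta$, after which the bound $(1/\eta)^{C\eta} = O(1)$ will finish the argument.

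First I would write out $\lambda$ explicitly. Since $(\beta'(1+\eta))' = \frac{\beta'(1+\eta)}{\beta'(1+\eta)-1}$, the definition of $\lambda$ gives
\[
\frac{1}{\lambda} = \frac{\beta'(1+\eta)-1}{\alpha\,\beta'(1+\eta)} = \frac{1}{\alpha}\Bigl(1-\frac{1}{\beta'(1+\eta)}\Bigr).
\]
Combining this with $\frac{1}{\alpha\beta} = \frac{1}{\alpha}\bigl(1-\frac{1}{\beta'}\bigr)$ and simplifying via the identity $\frac{1}{\beta'}-\frac{1}{\beta'(1+\eta)} = \frac{1}{\beta'}\cdot\frac{\eta}{1+\eta}$ yields the key identity
\[
\frac{1}{\lambda} - \frac{1}{\alpha\beta} = \frac{1}{\alpha\beta'}\cdot\frac{\eta}{1+\eta} \le \frac{\eta}{\alpha\beta'}.
\]

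Next, since $1/\eta > 1$, raising both sides gives
\[
\Bigl(\frac{1}{\eta}\Bigr)^{1/\lambda}\cdot\Bigl(\frac{1}{\eta}\Bigr)^{-1/(\alpha\beta)} = \Bigl(\frac{1}{\eta}\Bigr)^{1/\lambda - 1/(\alpha\beta)} \le \Bigl(\frac{1}{\eta}\Bigr)^{\eta/(\alpha\beta')} \le \Bigl(\frac{1}{\eta}\Bigr)^{\eta},
\]
using $\alpha, \beta' \ge 1$. A standard calculus argument (the function $t \mapsto t\log(1/t)$ on $(0,1)$ attains its maximum $1/e$ at $t = 1/e$) gives $(1/\eta)^{\eta} = e^{\eta\log(1/\eta)} \le e^{1/e}$, which is an absolute constant. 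The degenerate case $\beta = 1$ is trivial since then $\beta' = \infty$ forces $\lambda = \alpha = \alpha\beta$.

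There is no real obstacle here; the only care needed is to track the two appearances of the conjugate exponent correctly, and to notice that the gap between $1/\lambda$ and $1/(\alpha\beta)$ vanishes linearly in $\eta$ as $\eta \to 0^+$, which is exactly the right rate to be absorbed by the doubly-exponential smallness of $\eta \log(1/\eta)$.
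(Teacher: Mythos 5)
Your proof is correct and takes essentially the same approach as the paper: both reduce the claim to showing that the exponent gap between $1/\lambda$ and $1/(\alpha\beta)$ is $O(\eta)$, then invoke $(1/\eta)^\eta \le e^{1/e}$. The only cosmetic difference is that you bound the \emph{difference} $1/\lambda - 1/(\alpha\beta)$ directly, whereas the paper writes $1/\lambda$ as a convex combination of $1/(\alpha\beta)$ and $1/\alpha$ with weights $\frac{1}{1+\eta}$ and $\frac{\eta}{1+\eta}$ and factors the power accordingly; the underlying estimate is identical.
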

\begin{proof}
    We simply write
    \[
        \lambda=\frac{\alpha\beta(1+\eta)}{1+\beta\eta}
    \]
    and thus
    \begin{align*}
        \left(\frac{1}{\eta}\right)^\frac{1}{\lambda}= \left(\left[\frac{1}{\eta}\right]^\frac{1}{\alpha\beta}\right)^{\frac{1+\beta\eta}{1+\eta}}&=\left(\left[\frac{1}{\eta}\right]^\frac{1}{\alpha\beta}\right)^{\frac{1}{1+\eta}}\left(\left[\frac{1}{\eta}\right]^\frac{1}{\alpha}\right)^{\frac{\eta}{1+\eta}}\\&\leq \left(\frac{1}{\eta}\right)^\frac{1}{\alpha\beta}\left(\frac{1}{\eta}\right)^{\eta}\\&\leq e^\frac{1}{e}\left(\frac{1}{\eta}\right)^\frac{1}{\alpha\beta}.
    \end{align*}
\end{proof}

The goal now is to make use of convex body domination to get a matrix weighted norm inequality. For the sake of maximal applicability we simply assume that $T$ satisfies the convex body domination inequality
\[
    |\langle T\vec f,\vec g\rangle|\leq C_1\sum_{Q\in\mathscr{Q}} |Q| \,\llangle\Vec{f}\rrangle_{\textit{\L}^p(Q,B_1)}\cdot \llangle\Vec{g}\rrangle_{\textit{\L}^{q'}(Q,B_2^*)},
\]
where $C_1$ is a constant independent of $\vec f$ and $\vec g$ and $\mathscr Q$ is a $\gamma$-sparse collection of cubes possibly depending on $\vec f$ and $\vec g$. Then
we want to find a new constant $C=C(C_1,W)$ such that
\[
    \|T\vec f\|_{L_{B_2^n}^r(W)}\leq C \|\vec f\|_{L_{B_1^n}^r(W)}.
\]
Note that for a Banach space $B$ and a measure space $S$ the Bochner space $L^{p'}_{B^*}(S)\subset(L^{p}_B(S))^*$ is norming for $L^{p}_B(S)$ (see \cite{hytonen_analysis_2016} Proposition 1.3.1). Thus the above inequality is equivalent to
\[
    \langle W^\frac{1}{r} T[W^{-\frac{1}{r}}\Vec{f}\,],\Vec{g}\,\rangle
    \leq C \|\vec f\|_{L^r_{B_1^n}}\|\vec g\|_{L^{r'}_{(B^*_2)^n}}.
\]
Directly from the assumed convex body domination, we get
\begin{align*}
    \langle W^\frac{1}{r}T[W^{-\frac{1}{r}}\vec f\,],\vec g\,\rangle&= \langle T[W^{-\frac{1}{r}}\vec f\,],W^\frac{1}{r}\vec g\,\rangle
    \\&\leq C_1\sum_{Q\in\mathscr Q}|Q|\llangle W^{-\frac{1}{r}}\Vec{f}\rrangle_{\textit{\L}^p(Q,B_1)}\cdot\llangle W^{\frac{1}{r}}\Vec{g}\rrangle_{\textit{\L}^{q'}(Q,B_2^*)}.
\end{align*}
We estimate the convex bodies in the following lemma.
\begin{lem}\label{convbodytointegralstuff}
    Let $W$ be a matrix weight and $p,r,q'\in\left[1, \infty\right[$. For $\vec f\in\textit{\L}^p_{B_1^n}$ and $\vec g\in \textit{\L}^{q'}_{(B_2^*)^n}$ we have
    \begin{align*}
        \llangle W^{-\frac{1}{r}}\Vec{f}\rrangle_{\textit{\L}^p(Q,B_1)}&\cdot\llangle W^{\frac{1}{r}}\Vec{g}\rrangle_{\textit{\L}^{q'}(Q,B_2^*)}\\\leq& \bigg(\fint_Q  |\vec g(y)|^{q'}_{(B_2^*)^n}\Big[\fint_Q\left|W^{-\frac{1}{r}}(x) W^\frac{1}{r}(y)\right|_{op}^p|\vec f(x)|^p_{B_1^n}\intD x\Big]^\frac{q'}{p}\intD y\bigg)^\frac{1}{q'}.
    \end{align*}
\end{lem}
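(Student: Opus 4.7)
The plan is to realize each convex body via pairings with functions in norming subspaces of the Bochner duals, and then reduce to a pointwise Cauchy--Schwarz/Hölder estimate in the resulting double integral. By \cite[Proposition 1.3.1]{hytonen_analysis_2016}, $\textit{\L}^{p'}(Q,B_1^*)$ is norming for $\textit{\L}^p(Q,B_1)$, and $\textit{\L}^q(Q,B_2)$ is norming for $\textit{\L}^{q'}(Q,B_2^*)$. Lemma \ref{convBodyCalc} then permits me, for any $\varepsilon>0$, to compute the two convex bodies up to a factor of $(1+\varepsilon)$ by testing against $\varphi$ and $\psi$ in the unit balls of these norming subspaces.

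Writing the matrix action componentwise and using self-adjointness of $W^{\pm 1/r}(x)$, the $i$-th coordinate of the pairing of $W^{-1/r}\vec f$ with $\varphi$ equals $\fint_Q \sum_j [W^{-1/r}(x)]_{ij}\langle f_j(x),\varphi(x)\rangle_{(B_1,B_1^*)}\intD x$. Setting $\vec F(x)\coloneqq(\langle f_j(x),\varphi(x)\rangle)_{j=1}^n$ and $\vec G(y)\coloneqq(\langle g_j(y),\psi(y)\rangle)_{j=1}^n$ in $\R^n$, the Minkowski dot product is thus bounded by
\[
(1+\varepsilon)^2\sup_{\varphi,\psi}\left|\fint_Q\fint_Q \vec F(x)^\top W^{-\frac{1}{r}}(x)W^{\frac{1}{r}}(y)\vec G(y)\intD x\intD y\right|.
\]

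To finish I apply Cauchy--Schwarz in $\R^n$ pointwise to extract $|W^{-1/r}(x)W^{1/r}(y)|_{op}$, together with the trivial bounds $|\vec F(x)|\leq|\vec f(x)|_{B_1^n}|\varphi(x)|_{B_1^*}$ and $|\vec G(y)|\leq|\vec g(y)|_{(B_2^*)^n}|\psi(y)|_{B_2}$. Hölder's inequality in $x$ with exponents $(p,p')$ absorbs $\varphi$ into the factor $\|\varphi\|_{\textit{\L}^{p'}(Q,B_1^*)}\leq 1$, and Hölder's inequality in $y$ with exponents $(q',q)$ does the same for $\psi$. Letting $\varepsilon\to 0$ yields the stated inequality.

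The main obstacle is bookkeeping: identifying the correct norming subspaces so that Lemma \ref{convBodyCalc} applies, commuting the matrix action with the dual pairings on Banach-valued functions, and correctly tracking the exponents through the two applications of Hölder's inequality. Once this setup is in place, the estimates are essentially classical.
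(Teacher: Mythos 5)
Your proof is correct and follows essentially the same route as the paper: realize the two convex bodies via Lemma \ref{convBodyCalc} against test functions in norming subspaces of the Bochner duals, commute the self-adjoint matrices $W^{\pm 1/r}$ past the dual pairings to produce the kernel $W^{-1/r}(x)W^{1/r}(y)$, apply the pointwise operator-norm bound, and finish with two applications of H\"older's inequality (in $x$ with exponents $(p,p')$, in $y$ with exponents $(q',q)$) to absorb the test functions. One small imprecision in the setup: \cite[Proposition 1.3.1]{hytonen_analysis_2016}, applied with $B=B_2^*$, yields that $\textit{\L}^q(Q,B_2^{**})$ is norming for $\textit{\L}^{q'}(Q,B_2^*)$, and this is the choice the paper makes (taking $\psi$ with values in $B_2^{**}$). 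Your claim that $\textit{\L}^q(Q,B_2)$ is norming is not a direct instance of that proposition; it does hold, but it rests on the separate observation that $B_2$ norms $B_2^*$ together with a measurable-selection (or simple-function approximation) argument. Since only $|\psi(y)|$ enters the subsequent estimates, the rest of the proof is unaffected either way, and the cleanest fix is simply to take $\psi\in\Bar B_{\textit{\L}^q_{B_2^{**}}(Q)}$ as the paper does.
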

\begin{proof}
 In the light of Lemma \ref{convBodyCalc}, for every $\varepsilon>0$ it holds that
\begin{align*}
    (1+\varepsilon&)^{-2}\,\llangle W^{-\frac{1}{r}}\Vec{f}\rrangle_{\textit{\L}^p(Q,B_1)}\cdot\llangle W^{\frac{1}{r}}\Vec{g}\rrangle_{\textit{\L}^{q'}(Q,B_2^*)}\leq \\&\quad\Big|\fint_QW^{-\frac{1}{r}}(x)\langle\vec f(x),\phi(x)\rangle_{(B_1,B_1^*)}\intD x\cdot\fint_Q W^\frac{1}{r}(y)\langle\vec g(y),\psi(y)\rangle_{(B_2^*,B_2^{**})}\intD y\Big|,
\end{align*}
for some $\phi\in \Bar{B}_{\textit{\L}^{p'}_{B_1^*}(Q)}$ and $\psi\in\Bar{B}_{ \textit{\L}^{q}_{B_2^{**}(Q)}}$. Furthermore,
\begin{align*}
    &\left|\fint_Q W^{-\frac{1}{r}}(x)\langle\vec f(x),\phi(x)\rangle_{(B_1,B_1^*)}\intD x\cdot\fint_Q W^\frac{1}{r}(y)\langle\vec g(y),\psi(y)\rangle_{(B_2^*,B_2^{**})}\intD y\right|\\& \quad= \left|\fint_Q \fint_QW^\frac{1}{r}(y)W^{-\frac{1}{r}}(x) \langle\vec f(x),\phi(x)\rangle_{(B_1,B_1^*)}\cdot\langle\vec g(y),\psi(y)\rangle_{(B_2^*,B_2^{**})}\intD x\intD y\right|
    \\& \quad\leq\fint_Q \fint_Q\left|W^{-\frac{1}{r}}(x) W^\frac{1}{r}(y)\right|_{op}|\vec f(x)|_{B_1^n}|\phi(x)|_{B_1^*}|\vec g(y)|_{(B_2^*)^n}|\psi(y)|_{B_2^{**}}\intD x\intD y.
\end{align*}
Applying Hölder's inequality twice we see that the last quantity is dominated by 
\begin{align*}
     \bigg(\fint_Q  |\vec g(y)|^{q'}_{(B_2^*)^n}\Big[\fint_Q\left|W^{-\frac{1}{r}}(x) W^\frac{1}{r}(y)\right|_{op}^p|\vec f(x)|^p_{B_1^n}\intD x\Big]^\frac{q'}{p}\intD y\bigg)^\frac{1}{q'}.
\end{align*}
Combining the estimates and letting $\varepsilon\to 0$ concludes the proof.
\end{proof}
\noindent As a warm up we cover the endpoint case first.

\textbf{Case $\bm{p=1}$, $\bm{q=\infty}$:} We choose $p=1$, $q=\infty$ and $r=2$. In this case Lemma \ref{convbodytointegralstuff} yields
\begin{align*}
    &\sum_{Q\in\mathscr Q}|Q|\llangle W^{-\frac{1}{2}}\Vec{f}\rrangle_{\textit{\L}^{1}(Q,B_1)}\cdot\llangle W^{\frac{1}{2}}\Vec{g}\rrangle_{\textit{\L}^{1}(Q,B_2^*)}\leq \int_{\R^d} \Tilde{L}(|\vec f|_{B^n_1})(y)|\vec g(y)|_{(B_2^*)^n}\intD y,
\end{align*}
where 
\[
    \Tilde{L}f(y)\coloneqq\sum_{Q\in\mathscr Q}\mathbbm 1_Q(y)\fint_Q \left|W^{-\frac{1}{2}}(x)W^\frac{1}{2}(y)\right|_{op}|f(x)|\intD x
\]
for a scalar valued function $f$ and $\Tilde{L}(|\vec f|_{B^n_1})(y)\coloneqq\Tilde{L}(x\mapsto|\vec f(x)|_{B^n_1})(y)$.
In \cite[Lemma 5.6]{nazarov_convex_2017} it was shown that $\Tilde{L}$ is bounded in $L^2$ with 
\[
    \|\Tilde{L}\|_{L^2\to L^2}\lesssim_{d,n,\gamma} [W]_{A_2}^\frac{3}{2}.
\]
Thus we get a matrix weighted $A_2$ norm inequality
\begin{align*}
    \|T\vec f\|_{L^2_{B_2^n}(W)}\lesssim_{d,n,\gamma}C_1\,[W]_{A_2}^\frac{3}{2}\|\vec f\|_{L^2_{B_1^n}(W)}.
\end{align*}
The extrapolation result of \cite{bownik_extrapolation_2022} now gives that for all $1<r<\infty$, we have
\[
    \|T\vec f\|_{L^r_{B_2^n}(W)}\lesssim_{d,n,r,\gamma}C_1\,[W]_{A_r}^{\frac{3}{2}\max\{1,\frac{r'}{r}\}}\|\vec f\|_{L^r_{B_1^n}(W)}.\numberthis\label{extrapomweightbound}
\]
If we want to study the whole range $1<r<\infty$ without extrapolation arguments, then the weighted norm inequality is reduced to studying the $L^r$-boundedness of the operator $\Tilde{L}_r$ defined by
\[
    \Tilde{L}_rf(y)\coloneqq\sum_{Q\in\mathscr Q}\mathbbm 1_Q(y)\fint_Q \left|W^{-\frac{1}{r}}(x)W^\frac{1}{r}(y)\right|_{op}|f(x)|\intD x.
\]
We study the boundedness of the above operator in the following proposition.
\begin{prop}\label{OneoneCase}
    Let $1<r<\infty$. Then the operator $\Tilde{L}_r$ is bounded on $L^r$ with
    \[
        \|\Tilde{L}_r\|_{L^r\to L^r}\lesssim_{d,n,r,\gamma} [W]_{A_r}^{1+\frac{1}{r-1}-\frac{1}{r}}
    \]
\end{prop}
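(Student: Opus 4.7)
The plan is to reduce $\tilde L_r$ to a scalar sparse bilinear form via the reducing operators of $W$ and duality, and then to extract the sharp $[W]_{A_r}$-dependence from the reverse Hölder bounds of Lemma~\ref{redOpRevHölderLemma} together with Lemma~\ref{numbersLemma}. The first step is to factorise, with $A_Q=A_{Q,r}$ and $B_Q=B_{Q,r}$,
\[
W^{-\frac{1}{r}}(x)W^{\frac{1}{r}}(y)=\bigl(W^{-\frac{1}{r}}(x)B_Q^{-1}\bigr)\cdot\bigl(B_QA_Q\bigr)\cdot\bigl(A_Q^{-1}W^{\frac{1}{r}}(y)\bigr),
\]
take operator norms, and use $|B_QA_Q|_{op}\lesssim_{n,r}[W]_{A_r}^{1/r}$. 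Hölder's inequality on the $x$-average with exponents $r'(1+\sigma)$ and $p_0:=(r'(1+\sigma))'<r$ then absorbs the first matrix factor via Lemma~\ref{redOpRevHölderLemma} and yields
\[
\tilde L_rf(y)\lesssim_n [W]_{A_r}^{1/r}\sum_{Q\in\mathscr Q}\mathbbm 1_Q(y)\,|A_Q^{-1}W^{\frac{1}{r}}(y)|_{op}\,\Bigl(\fint_Q|f|^{p_0}\Bigr)^{\!1/p_0}.
\]

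Next, I would invoke the duality $\|\tilde L_rf\|_{L^r}=\sup_{\|g\|_{L^{r'}}\leq 1}\int\tilde L_rf\cdot g$ and apply a second Hölder on each cube $Q$, this time with exponents $r(1+\delta)$ and $q_0:=(r(1+\delta))'<r'$; once again the matrix factor is absorbed via Lemma~\ref{redOpRevHölderLemma}. This reduces the estimate to controlling the scalar sparse bilinear form
\[
\sum_{Q\in\mathscr Q}|Q|\,\Bigl(\fint_Q|f|^{p_0}\Bigr)^{\!1/p_0}\Bigl(\fint_Q|g|^{q_0}\Bigr)^{\!1/q_0}.
\]
Since $p_0<r$ and $q_0<r'$, the sparsity pigeonhole $|Q|\leq\gamma^{-1}|E_Q|$ combined with the trivial bound $\bigl(\fint_Q|h|^s\bigr)^{1/s}\leq \mathcal M_sh(x)$ on $Q$ and the Hardy--Littlewood maximal inequality yields
\[
\sum_{Q\in\mathscr Q}|Q|\,\Bigl(\fint_Q|f|^{p_0}\Bigr)^{\!1/p_0}\Bigl(\fint_Q|g|^{q_0}\Bigr)^{\!1/q_0}\lesssim_\gamma \Bigl(\tfrac{r}{r-p_0}\Bigr)^{\!1/p_0}\Bigl(\tfrac{r'}{r'-q_0}\Bigr)^{\!1/q_0}\|f\|_{L^r}\|g\|_{L^{r'}}.
\]

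For the final step, a short computation gives $r/(r-p_0)\eqsim_r 1/\sigma$ and $r'/(r'-q_0)\eqsim_r 1/\delta$, and then Lemma~\ref{numbersLemma} applied with $(\alpha,\beta,\eta)=(1,r,\sigma)$ and $(1,r',\delta)$ replaces $(1/\sigma)^{1/p_0}$ and $(1/\delta)^{1/q_0}$ by $(1/\sigma)^{1/r}$ and $(1/\delta)^{1/r'}$ up to dimensional constants. Using $1/\delta\lesssim[W]^{Sc}_{A_r}\lesssim_{n,r}[W]_{A_r}$ and $1/\sigma\lesssim[\Sigma]^{Sc}_{A_{r'}}\lesssim_{n,r}[\Sigma]_{A_{r'}}\eqsim_{n,r}[W]_{A_r}^{r'/r}$ (the last from $[W]_{A_r}^{1/r}\eqsim_{n,r}[\Sigma]_{A_{r'}}^{1/r'}$), the total exponent sums to
\[
\frac{1}{r}+\frac{r'}{r^2}+\frac{1}{r'}=1+\frac{1}{r-1}-\frac{1}{r},
\]
which is precisely the claimed bound. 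The main obstacle is the bookkeeping: the Hölder exponents must be chosen as $r'(1+\sigma)$ and $r(1+\delta)$ so that the reverse Hölder estimates of Lemma~\ref{redOpRevHölderLemma} apply on the nose, and the role of Lemma~\ref{numbersLemma} is exactly to convert the awkward $p_0$- and $q_0$-exponents coming out of the maximal-function estimate into the clean $r$- and $r'$-exponents needed for the final arithmetic to collapse.
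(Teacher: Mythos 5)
Your proposal is correct and follows essentially the same route as the paper's proof: reduce to a bilinear form, insert the reducing operators $A_Q,B_Q$ to pay the $[W]_{A_r}^{1/r}$, absorb the matrix factors via the self-improving Hölder exponents from Lemma~\ref{redOpRevHölderLemma}, bound the resulting scalar sparse form by maximal functions, and clean up the exponents with Lemma~\ref{numbersLemma} together with $[W]_{A_r}^{1/r}\eqsim_{n,r}[\Sigma]_{A_{r'}}^{1/r'}$. The only cosmetic difference is that you apply the inner-average Hölder before the Cauchy--Schwarz over the sparse sum, where the paper does the two steps in the opposite order; the intermediate quantities and the final arithmetic $\tfrac1r+\tfrac{r'}{r^2}+\tfrac1{r'}=1+\tfrac1{r-1}-\tfrac1r$ are identical.
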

\begin{rmrk}
    The exponent $1+\frac{1}{r-1}-\frac{1}{r}$ can also be found in \cite[Corollary 1.16]{cruz-uribe_ofs_two_2018} and \cite[Theorem 1.5]{duong_variation_2021}. Our proof has the same idea, but it is more elementary in the sense that we do not use any Orlicz space theory. 
\end{rmrk}
\begin{proof}
    Let $A_Q$ and $B_Q$ be as in Lemma \ref{redOpRevHölderLemma}.
    Since $|A_QB_Q|_{op}\eqsim_{n,r}[W]_{A_r}^\frac{1}{r}$ we get
    \begin{align*}
        \left|W^{-\frac{1}{r}}(x)W^\frac{1}{r}(y)\right|_{op}\lesssim_{n,r}[W]_{A_r}^\frac{1}{r}\left|A_Q^{-1}W^\frac{1}{r}(y)\right|_{op}\left|B_Q^{-1}W^{-\frac{1}{r}}(x)\right|_{op}
    \end{align*}
    and hence 
    \begin{align*}
        \langle L_rf,g\rangle_{(L^r,L^{r'})}&\leq \sum_{Q\in\mathscr Q}|Q|\fint_Q\fint_Q \left|W^{-\frac{1}{r}}(x)W^\frac{1}{r}(y)\right|_{op}|f(x)|\intD x \,|g(y)|\intD y \\
        &\lesssim_{n,r} [W]_{A_r}^\frac{1}{r}\sum_{Q\in\mathscr Q}|Q|\fint_Q \left|B_Q^{-1}W^{-\frac{1}{r}}(x)\right|_{op}|f(x)|\intD x \\&\qquad\qquad\qquad\qquad\qquad\qquad\fint_Q\left|A_Q^{-1}W^{\frac{1}{r}}(y)\right|_{op}|g(y)|\intD y.
    \end{align*}
    By Hölder's inequality we have
    \[
    \sum_{Q\in\mathscr Q}|Q|\fint_Q \left|B_Q^{-1}W^{-\frac{1}{r}}(x)\right|_{op}|f(x)|\intD x \,\fint_Q\left|A_Q^{-1}W^{\frac{1}{r}}(y)\right|_{op}|g(y)|\intD y \leq 
    S_{f}\,S_{g},
    \]
    where we denoted
    \[
        S_{f}\coloneqq\left(\sum_{Q\in\mathscr Q}|Q|\left[\fint_Q \left|B^{-1}_QW^{-\frac{1}{r}}(x)\right|_{op}|f(x)|\intD x \right]^r\right)^\frac{1}{r}
    \]
    and 
    \[        S_{g}\coloneqq\left(\sum_{Q\in\mathscr Q}|Q|\left[\fint_Q \left|A_Q^{-1}W^\frac{1}{r}(x)\right|_{op}|g(x)|\intD x \right]^{r'}\right)^\frac{1}{r'}
    \]
    
    To estimate $S_f$, we use Hölder's inequality to get
    \begin{align*}
        \fint_Q \left|B_Q^{-1}W^{-\frac{1}{r}}(x)\right|_{op}&|f(x)|\intD x \\\leq& \left(\fint_Q \left|B_Q^{-1}W^{-\frac{1}{r}}(x)\right|_{op}^{r'(1+\sigma)}\intD x\right)^\frac{1}{r'(1+\sigma)}\left(\fint_Q |f(x)|^{a}\intD x\right)^\frac{1}{a},
    \end{align*}
    where $\sigma\coloneqq\frac{1}{2^{d+1}[\Sigma]^{Sc}_{A_r'}-1}$ and $a\coloneqq(r'(1+\sigma))'<r$.
     By Lemma \ref{redOpRevHölderLemma} we have that
    \[
        \left(\fint_Q \left|B_Q^{-1}W^{\frac{1}{r}}(x)\right|_{op}^{r'(1+\sigma)}\intD x\right)^\frac{1}{r'(1+\sigma)}\lesssim_{n}1
    \]
    Thus applying the sparseness of the cubes we get
    \begin{align*}
        S_{f}\lesssim_{n} \left(\sum_{Q\in\mathscr Q}|Q|\left[\fint_Q |f(x)|^a\intD x \right]^\frac{r}{a}\right)^\frac{1}{r}&\leq \left(\sum_{Q\in\mathscr Q}\gamma^{-1}\int_{E_Q}\left[\mathcal{M}_a^\mathscr D f(y)\right]^r\intD y\right)^\frac{1}{r}\\&
        \leq\gamma^{-\frac{1}{r}}\|\mathcal{M}^\mathscr D_af\|_{L^r},
    \end{align*}
    where $\mathcal{M}^\mathscr D_af=(\mathcal{M}^\mathscr D|f|^a)^\frac{1}{a}$ with $\mathcal{M}^\mathscr D$ being the dyadic Hardy-Littlewood maximal operator.
    Similarly, with $\delta \coloneqq \frac{1}{2^{d+1}[W]^{Sc}_{A_r}-1}$ we have
    \[
        S_{g}\lesssim_{n,\gamma} \|\mathcal{M}^\mathscr D_bg\|_{L^{r'}},
    \]
    where $b\coloneqq(r(1+\delta))'<r'$.

    Combining everything together we get
    \begin{align*}
        \langle L_rf,g\rangle_{(L^r,L^{r'})}&\lesssim_{n,r,\gamma} [W]_{A_r}^\frac{1}{r}\|\mathcal{M}^\mathscr D_af\|_{L^{r}}\|\mathcal{M}^\mathscr D_bg\|_{L^{r'}} \\&\leq [W]_{A_r}^\frac{1}{r}\left(\frac{r}{r-a}\right)^\frac{1}{a}\left(\frac{r'}{r'-b}\right)^\frac{1}{b}\|f\|_{L^{r}}\|g\|_{L^{r'}}.
    \end{align*}
    For $y$,  and $x=y(1+\delta)$ we have 
    \[
        y'-x' =\frac{y\delta}{(y-1)(y\delta+y-1)}.
    \]
    Applying this with $y=r$ yields that $r'-b\gtrsim_r\delta$ and similarly $r-a\gtrsim_r\sigma$. We may then apply Lemma \ref{numbersLemma} twice, first with $\alpha=1$, $\beta=r$ and $\eta=\sigma$ and then with $\alpha=1$ $\beta=r'$ and $\eta=\delta$, to get
    \[
        \left(\frac{r}{r-a}\right)^\frac{1}{a}\left(\frac{r'}{r'-b}\right)^\frac{1}{b}\lesssim_r \left(\frac{1}{\sigma}\right)^{\frac{1}{a}}\left(\frac{1}{\delta}\right)^{\frac{1}{b}}\lesssim \left(\frac{1}{\sigma}\right)^{\frac{1}{r}}\left(\frac{1}{\delta}\right)^{\frac{1}{r'}}.
    \]
    Then we are done once we note that
    \[
        \left(\frac{1}{\sigma}\right)^{\frac{1}{r}}\left(\frac{1}{\delta}\right)^{\frac{1}{r'}}\lesssim_{d,n,r}[\Sigma]_{A_{r'}}^\frac{1}{r}[W]_{A_r}^\frac{1}{r'}\eqsim_{n,r}[W]_{A_r}^{1+\frac{1}{r-1}-\frac{2}{r}}.
    \]
\end{proof}
\noindent Now it follows that if $1<r<\infty$ and $W\in A_r$, then we have
\[
    \|T\vec f\|_{L^r_{B_2^n}(W)}\lesssim_{d,n,r,\gamma}C_1\,[W]_{A_r}^{1+\frac{1}{r-1}-\frac{1}{r}}\|\vec f\|_{L^r_{B_1^n}(W)}.\numberthis\label{poneqinftynormest}
\]
We note that the exponent of $[W]_{A_r}$ in the above estimate is better than the one in \eqref{extrapomweightbound} that arose from the extrapolation argument. As we have already discussed, similar results have been achieved in \cite{cruz-uribe_ofs_two_2018, duong_variation_2021}. The novelty in the above result comes from the fact that we consider Banach space -valued functions and we are dominating a bilinear form. In the scalar case it is known, see \cite[proof of Theorem 1]{perez_sharp_2010}, that one has the sharp weight exponent $\max\{1,\frac{1}{r-1}\}$.

\textbf{Case $\bm{p>1}$, $\bm{q<\infty}$:}
Recall that in this case Lemma \ref{convbodytointegralstuff} gives 
\begin{align*}
    &\langle W^\frac{1}{r}T[W^{-\frac{1}{r}}\vec f\,],\vec g\,\rangle\\&\qquad\leq  C_1\sum_{Q\in\mathscr Q}|Q|\bigg(\fint_Q  |\vec g(y)|^{q'}_{(B_2^*)^n}\Big[\fint_Q\left|W^{-\frac{1}{r}}(x) W^\frac{1}{r}(y)\right|_{op}^p|\vec f(x)|^p_{B_1^n}\intD x\Big]^\frac{q'}{p}\intD y\bigg)^\frac{1}{q'}.
\end{align*}
Let $t\coloneqq\frac{r}{p}$. We assume that $p<r$ and $W\in A_t$. 
The term inside the sum can be dominated  with  
\begin{align*}
    |Q|\left|A_{Q,t}^\frac{1}{p}B_{Q,t}^\frac{1}{p}\right|_{op}&\left(\fint_Q |\vec f(x)|^{p}_{B_1^n}\left|B^{-\frac{1}{p}}_{Q,t}W^{-\frac{1}{r}}(x)\right|_{op}^{p}\intD x \right)^\frac{1}{p}\\&\qquad\qquad\qquad\qquad\left(\fint_Q |\vec g(y)|^{q'}_{(B_2^*)^n}\left|A^{-\frac{1}{p}}_{Q,t}W^\frac{1}{r}(y)\right|_{op}^{q'}\intD y \right)^\frac{1}{q'}.
\end{align*}
By the Cordes inequality (Lemma \ref{Cordesineq}) we have
\[
    \left|A_{Q,t}^\frac{1}{p}B_{Q,t}^\frac{1}{p}\right|_{op}\leq \left|A_{Q,t}B_{Q,t}\right|_{op}^\frac{1}{p}\eqsim_{n,p,r} [W]_{A_t}^\frac{1}{r},
\]
\[
    \left|B^{-\frac{1}{p}}_{Q,t}W^{-\frac{1}{r}}(x)\right|_{op}^{p}\leq \left|B^{-1}_{Q,t}W^{-\frac{1}{t}}(x)\right|_{op}
\]
and
\[
    \left|A^{-\frac{1}{p}}_{Q,t}W^\frac{1}{r}(y)\right|_{op}^{q'}\leq \left|A^{-1}_{Q,t}W^\frac{1}{t}(y)\right|_{op}^\frac{q'}{p}.
\]
We combine the estimates so far and use Hölder's inequality to get
\begin{align*}
    \langle W^\frac{1}{r}T[W^{-\frac{1}{r}}\vec f],\vec g\,\rangle\lesssim_{n,p,r} [W]_{A_t}^\frac{1}{r} S_{\vec f}\,\tilde S_{\vec g},
\end{align*}
where
\[
    S_{\vec f} \coloneqq \left(\sum_{Q\in\mathscr Q}|Q|\left[\fint_Q |\vec f(x)|^{p}_{B_1^n}\left|B^{-1}_{Q,t}W^{-\frac{1}{t}}(x)\right|_{op}\intD x \right]^\frac{r}{p}\right)^\frac{1}{r}
\]
and
\[
    \tilde S_{\vec g} \coloneqq \left(\sum_{Q\in\mathscr Q}|Q|\left[\fint_Q |\vec g(y)|^{q'}_{(B_2^*)^n}\left|A^{-1}_{Q,t}W^{\frac{1}{t}}(y)\right|_{op}^\frac{q'}{p}\intD y \right]^\frac{r'}{q'}\right)^\frac{1}{r'}.
\]
We will first deal with $S_{\vec f}$. An application of Hölder's inequality yields
\begin{align*}
    &\left(\fint_Q |\vec f(x)|^{p}_{B_1^n}\left|B^{-1}_{Q,t}W^{-\frac{1}{t}}(x)\right|_{op}\intD x \right)^\frac{1}{p}\\&\qquad\qquad\leq \left(\fint_Q |\vec f(x)|^{a}_{B_1^n}\right)^\frac{1}{a}\left(\fint_Q\left|B^{-1}_{Q,t}W^{-\frac{1}{t}}(x)\right|_{op}^{t'(1+\sigma)}\intD x \right)^\frac{1}{pt'(1+\sigma)},
\end{align*}
where $a\coloneqq p(t'(1+\sigma))'$ and $\sigma\coloneqq\frac{1}{2^{d+1}[\Sigma]^{Sc}_{A_{t'}}-1}$. Lemma \ref{redOpRevHölderLemma} gives that
\[
    \left(\fint_Q\left|B^{-1}_{Q,t}W^{-\frac{1}{t}}(x)\right|_{op}^{t'(1+\sigma)}\intD x \right)^\frac{1}{t'(1+\sigma)}\lesssim_{n} 1.
\]
Thus 
\begin{align*}
    S_{\vec f}&=\left(\sum_{Q\in\mathscr Q}|Q|\left[\fint_Q |\vec f(x)|^{p}_{B_1^n}\left|B^{-1}_{Q,t}W^{-\frac{1}{t}}(x)\right|_{op}\intD x \right]^\frac{r}{p}\right)^\frac{1}{r}\\&\lesssim_{n}\left(\sum_{Q\in\mathscr Q}|Q|\left[\fint_Q |\vec f(x)|^{a}_{B_1^n}\intD x \right]^\frac{r}{a}\right)^\frac{1}{r}.
\end{align*}
Note that $a=p(t'(1+\sigma))'<pt=r$. Thus similar calculations as in Proposition \ref{OneoneCase} yield
\begin{align*}
    \left(\sum_{Q\in\mathscr Q}|Q|\left[\fint_Q |\vec f(x)|^{a}_{B_1^n}\intD x \right]^\frac{r}{a}\right)^\frac{1}{r}\lesssim_{\gamma} \left(\frac{r}{r-a}\right)^\frac{1}{a}\| \vec f\|_{L^r_{B^n_1}}
\end{align*}
and hence
\begin{align*}
    S_{\vec f}&\lesssim_{n,\gamma} \left(\frac{r}{r-a}\right)^\frac{1}{a}\| \vec f\|_{L^r_{B^n_1}}.
\end{align*}
Furthermore, we have $r-a\gtrsim_{r,p}\sigma$ and hence
\[
    \left(\frac{r}{r-a}\right)^\frac{1}{a}\lesssim_{p,r} \sigma^{-\frac{1}{a}}\lesssim\sigma^{-\frac{1}{r}}\lesssim_{d,n,p,r}[W]_{A_t}^\frac{1}{r(t-1)},
\]
where the second inequality is due to Lemma \ref{numbersLemma} with $\alpha=p$, $\beta=t$ and $\eta=\sigma$.

In order to estimate $\tilde S_{\vec g}$
we assume $q>r$, fix $s\coloneqq\left(\frac{
q}{r}\right)'$ and consider weights $W\in RH_{t,s}$.
We then note that \[b\coloneqq q'\left(\frac{pts(1+\vartheta)}{q'}\right)' =
q'\left(\frac{rs(1+\vartheta)}{q'}\right)'<q'\left(\frac{rs}{q'}\right)'=r'\] and apply Hölder's inequality to get
\begin{align*}
    \left(\fint_Q |\vec g(y)|^{q'}_{(B_2^*)^n}\left|A^{-1}_{Q,t}W^\frac{1}{t}(y)\right|_{op}^\frac{q'}{p}\intD y \right)^\frac{1}{q'}\leq \left(\fint_Q \left|A^{-1}_{Q,t}W^\frac{1}{t}(y)\right|_{op}^{ts(1+\vartheta)}\intD y \right)^{\frac{1}{ts(1+\vartheta)}\frac{1}{p}}&\\\left(\fint_Q |\vec g(y)|^{b}_{(B_2^*)^n}\intD y\right)^\frac{1}{b}&,
\end{align*}
where \[\vartheta\coloneqq\frac{1}{2^{d+1}[(AW^{Sc}_{Q,t})^s]_{A_\infty}-1}\quad \text{ with } \quad AW^{Sc}_{Q,t}(y)\coloneqq\left|A^{-1}_{Q,t}W^\frac{1}{t}(y)\right|_{op}^{t}.\] The above is well defined since by Lemma \ref{scarevHölrmrk} the scalar weight $(AW^{Sc}_{Q,t})^s$ is in $A_\infty$, and we can apply the reverse Hölder inequality and the definition of $RH_{t,s}$ to get 
\begin{align*}
    \left(\fint_Q \left|A^{-1}_{Q,t}W^\frac{1}{t}(y)\right|_{op}^{ts(1+\vartheta)}\intD y \right)^\frac{1}{ts(1+\vartheta)}&\lesssim \left(\fint_Q \left|A^{-1}_{Q,t}W^\frac{1}{t}(y)\right|_{op}^{ts}\intD y \right)^\frac{1}{ts}\\&\leq [W]^\frac{1}{t}_{RH_{t,s}}\left(\fint_Q \left|A^{-1}_{Q,t}W^\frac{1}{t}(y)\right|_{op}^{t}\intD y \right)^\frac{1}{t}\\&\lesssim_{n}[W]^\frac{1}{t}_{RH_{t,s}}.
\end{align*}
We arrive at 
\begin{align*}
    \left(\fint_Q |\vec g(y)|^{q'}_{(B_2^*)^n}\left|A^{-\frac{1}{p}}_{Q,t}W^\frac{1}{r}(y)\right|_{op}^{q'}\intD y \right)^\frac{1}{q'}&\lesssim_{n} [W]^\frac{1}{r}_{RH_{t,s}} \left(\fint_Q |\vec g(y)|^{b}_{(B_2^*)^n}\right)^\frac{1}{b}
\end{align*}
and hence 
\begin{align*}
    \tilde S_{\vec g}&=\left(\sum_{Q\in\mathscr Q}|Q|\left[\fint_Q |\vec g(y)|^{q'}_{(B_2^*)^n}\left|A^{-\frac{1}{p}}_{Q,t}W^{\frac{1}{r}}(x)\right|_{op}^{q'}\intD x \right]^\frac{r'}{q'}\right)^\frac{1}{r'}\\&\lesssim_{n,\gamma} [W]^\frac{1}{r}_{RH_{t,s}}\left(\frac{r'}{r'-b}\right)^\frac{1}{b}\| \vec g\|_{L^{r'}_{(B_2^*)^n}}
    \lesssim_{q,r}[W]^\frac{1}{r}_{RH_{t,s}}\vartheta^{-\frac{1}{b}}\| \vec g\|_{L^{r'}_{(B_2^*)^n}}.
\end{align*}
We again have 
\[
    \vartheta^{-\frac{1}{b}}\lesssim \vartheta^{-\frac{1}{r'}},
\]
which can be seen by recalling that 
\[
    b=q'\left(\frac{rs}{q'}(1+\vartheta)\right)'
\]
and applying Lemma \ref{numbersLemma} with $\alpha=q'$, $\beta=\left(\frac{rs}{q'}\right)'=\frac{r'}{q'}$ and $\eta=\vartheta$.
By Lemma \ref{scarevHölrmrk} we have
\[
    [(AW^{Sc}_{Q,t})^s]_{A_\infty} \leq [AW^{Sc}_{Q,t}]_{RH_{s}}^s[AW^{Sc}_{Q,t}]_{A_\infty}^s,
\]
and the terms on the right-hand side satisfy
\[
    [AW^{Sc}_{Q,t}]_{RH_{s}}\leq [W]_{RH_{t,s}}
\]
and 
\[
    [AW^{Sc}_{Q,t}]_{A_\infty} \leq [W]^{Sc}_{A_t}\lesssim_{n,r} [W]_{A_t}.
\]
Thus we get
\[
    \vartheta^{-\frac{1}{r'}}\lesssim_d  \left([AW^{Sc}_{Q,t}]_{RH_{s}}[AW^{Sc}_{Q,t}]_{A_\infty}\right)^\frac{s}{r'} \lesssim_{n,q,r} \left([W]_{RH_{t,s}}[W]_{A_t}\right)^\frac{s}{r'}.
\]

Combining everything together we get that, assuming $1<p<r<q<\infty$ and $W\in A_t\cap RH_{t,s}$ where  $t\coloneqq\frac{r}{p}$ and $s\coloneqq\left(\frac{q}{r}\right)'$, we have
\begin{align*}
    \|T\vec f\|_{L^r_{B_2^n}(W)}&\lesssim_{d,n,p,q,r,\gamma}C_1 \,[W]_{A_t}^{\frac{t'}{r}+\frac{s}{r'}}[W]_{RH_{t,s}}^{\frac{1}{r}+\frac{s}{r'}} \|\vec f\|_{L^r_{B_1^n}(W)}.
\end{align*}
Note that if we set $p=1$ and $q=\infty$ in the above inequality, then we get \eqref{poneqinftynormest}. Furthermore, the same proof with obvious changes gives the same inequality with $p=1, q<\infty$ and $p>1, q=\infty$.
The results of the above discussion are summarized in the following theorem.
\begin{thm}\label{pqWeightnormineq}
    Let $1\leq p<r<q\leq\infty$, $t=\frac{r}{p}$ , $s=\left(\frac{q}{r}\right)'$ and $W\in A_t\cap RH_{t,s}$. Let $T$ be an operator that satisfies the convex body domination estimate
    \[
        |\langle T\vec f,\vec g\,\rangle|\leq C_1 \sum_{Q\in\mathscr Q}|Q|\llangle\Vec{f}\rrangle_{\textit{\L}^p(Q,B_1)}\cdot\llangle \vec g\rrangle_{\textit{\L}^{q'}(Q,B_2^*)},
    \]
    where $\mathscr Q$ is a $\gamma$-sparse family of cubes and $C_1$ is a constant. Then $T$ satisfies a matrix weighted norm inequality
    \[
        \|T\vec f\|_{L^r_{B_2^n}(W)}\leq C[W]_{A_t}^{\frac{t'}{r}+\frac{s}{r'}}[W]_{RH_{t,s}}^{\frac{1}{r}+\frac{s}{r'}} \|\vec f\|_{L^r_{B_1^n}(W)},
    \]
    where $C=C_1C_2$ and $C_2$ depends only on $d,n,p,q,r$ and $\gamma$.
\end{thm}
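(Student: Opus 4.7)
The plan is to follow the roadmap laid out in the discussion preceding the theorem statement, reducing the weighted norm inequality to a sparse/bilinear form estimate via duality and then exploiting the convex body domination hypothesis together with the reducing matrix machinery.

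First I would use the fact that $L^{r'}_{(B_2^*)^n}$ is norming for $L^r_{B_2^n}$ (Proposition 1.3.1 of \cite{hytonen_analysis_2016}), so that proving the claimed bound reduces to establishing
\[
    \bigl|\langle W^{1/r} T[W^{-1/r}\vec f\,],\vec g\,\rangle\bigr|\lesssim C_1\,[W]_{A_t}^{\frac{t'}{r}+\frac{s}{r'}}[W]_{RH_{t,s}}^{\frac{1}{r}+\frac{s}{r'}} \|\vec f\|_{L^r_{B_1^n}}\|\vec g\|_{L^{r'}_{(B_2^*)^n}}
\]
for all $\vec f\in L^r_{B_1^n}$ and $\vec g\in L^{r'}_{(B_2^*)^n}$. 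Using the identity $\langle W^{1/r} T[W^{-1/r}\vec f\,],\vec g\rangle=\langle T[W^{-1/r}\vec f\,],W^{1/r}\vec g\rangle$ from \eqref{matrixswapbili}, the hypothesis immediately produces a sparse sum of products of convex bodies involving $W^{-1/r}\vec f$ and $W^{1/r}\vec g$, which Lemma~\ref{convbodytointegralstuff} transforms into a genuine double integral expression involving $|W^{-1/r}(x)W^{1/r}(y)|_{op}$.

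Next I would introduce the reducing matrices $A_Q:=A_{Q,t}$ and $B_Q:=B_{Q,t}$ of Lemma~\ref{redOpRevHölderLemma}, factor
\[
    \bigl|W^{-1/r}(x)W^{1/r}(y)\bigr|_{op}\lesssim \bigl|A_Q^{1/p} B_Q^{1/p}\bigr|_{op}\,\bigl|B_Q^{-1/p}W^{-1/r}(x)\bigr|_{op}\,\bigl|A_Q^{-1/p}W^{1/r}(y)\bigr|_{op},
\]
and use the Cordes inequality (Lemma~\ref{Cordesineq}) to extract $|A_Q B_Q|_{op}^{1/p}\eqsim_{n,p,r}[W]_{A_t}^{1/r}$, absorbing the remaining fractional powers into the $B_Q^{-1}W^{-1/t}(x)$ and $A_Q^{-1}W^{1/t}(y)$ factors raised to powers $\leq 1$. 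Hölder's inequality in the space variables then separates the right-hand side into a sum $S_{\vec f}\cdot \tilde S_{\vec g}$ of the shape appearing in the discussion.

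Now I would handle $S_{\vec f}$ and $\tilde S_{\vec g}$ individually. For $S_{\vec f}$, an inner application of Hölder with exponent $t'(1+\sigma)$ combined with the reverse Hölder estimate for $B_Q^{-1}W^{-1/t}$ from Lemma~\ref{redOpRevHölderLemma} (with $\sigma=(2^{d+1}[\Sigma]_{A_{t'}}^{Sc}-1)^{-1}$) removes the matrix factor at the cost of replacing the $L^p$ average of $|\vec f|_{B_1^n}$ by an $L^a$ average with $a<r$; then the sparseness of $\mathscr Q$ and the dyadic Hardy–Littlewood maximal inequality bound $S_{\vec f}$ by $\bigl(\tfrac{r}{r-a}\bigr)^{1/a}\|\vec f\|_{L^r_{B_1^n}}$. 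Lemma~\ref{numbersLemma} converts this into $[W]_{A_t}^{1/(r(t-1))}=[W]_{A_t}^{t'/r-1/r}$. For $\tilde S_{\vec g}$, I would symmetrically use Hölder with exponent $ts(1+\vartheta)$ where $\vartheta$ is chosen according to the $A_\infty$ constant of $(AW^{Sc}_{Q,t})^s$; the reverse Hölder property then brings in $[W]_{RH_{t,s}}^{1/r}$, and an application of Lemma~\ref{numbersLemma} followed by Lemma~\ref{scarevHölrmrk} gives the factor $\bigl([W]_{RH_{t,s}}[W]_{A_t}\bigr)^{s/r'}$.

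Multiplying the factors together yields $[W]_{A_t}^{1/r}\cdot [W]_{A_t}^{t'/r-1/r}\cdot [W]_{RH_{t,s}}^{1/r}\cdot\bigl([W]_{RH_{t,s}}[W]_{A_t}\bigr)^{s/r'}=[W]_{A_t}^{t'/r+s/r'}[W]_{RH_{t,s}}^{1/r+s/r'}$, which is exactly the stated exponent. The degenerate endpoint combinations $p=1$ or $q=\infty$ are handled by the same argument with the corresponding Hölder step simplified to an $L^\infty$ bound. The main obstacle is purely bookkeeping: correctly tracking the exponents produced by the two applications of Lemma~\ref{numbersLemma} so that the final power of $[W]_{A_t}$ and $[W]_{RH_{t,s}}$ comes out symmetric and matches the statement; the analytic content is entirely contained in Lemmas~\ref{redOpRevHölderLemma}, \ref{Cordesineq}, \ref{convbodytointegralstuff} and the sparse/maximal function comparison.
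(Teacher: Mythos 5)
Your proposal follows essentially the same route as the paper's proof: duality to reduce to the bilinear form, Lemma~\ref{convbodytointegralstuff} to pass from convex bodies to the double integral, the reducing matrices $A_{Q,t}$, $B_{Q,t}$ with the Cordes inequality to extract $[W]_{A_t}^{1/r}$, and then the two Hölder/reverse-Hölder estimates controlling $S_{\vec f}$ and $\tilde S_{\vec g}$ via Lemma~\ref{redOpRevHölderLemma}, the $RH_{t,s}$ condition, sparseness, and Lemma~\ref{numbersLemma}. The exponent bookkeeping you carry out (in particular $[W]_{A_t}^{1/r}\cdot[W]_{A_t}^{t'/r-1/r}\cdot[W]_{RH_{t,s}}^{1/r}\cdot([W]_{RH_{t,s}}[W]_{A_t})^{s/r'}$) matches the paper exactly, and your treatment of the endpoint cases $p=1$ or $q=\infty$ by simplification agrees with the paper's remark that ``the same proof with obvious changes'' covers those cases.
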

To the author's best knowledge the above is the first matrix-weighted norm inequality that has arisen from $(\textit{\L}^p,\textit{\L}^{q'})$ convex body domination at this level of generality with general exponents $1\leq p<q\leq\infty$. See Section 6 in \cite{di_plinio_sparse_2021} for results in the special case, where one can take $p=q'=1+\varepsilon$ for small enough $\varepsilon>0$.  

Note that $\frac{1}{r}+\frac{s}{r'}=\frac{q-1}{q-r}$ and $\frac{t'}{r}+\frac{s}{r'}=\frac{p}{(r-p)r}+\frac{q-1}{q-r}$. This modifies the conclusion of the above theorem to a form that is somewhat close to the scalar-valued version from \cite[Proposition 6.4]{bernicot_sharp_2016}. The scalar-valued conclusion is that the needed weight constant is
\[
    ([w]_{A_t}[w]_{RH_s})^{\max\{\frac{1}{r-p},\frac{q-1}{q-r}\}}
\]
and the exponent is sharp. Unfortunately, in most cases our result cannot reach the same exponent.

Theorem \ref{mainresult} combined with Theorem \ref{pqWeightnormineq} yields the following corollary. 
\begin{cor}\label{TweightCor}
    Let $p,r,q,t$ and $s$ be as in Theorem \ref{pqWeightnormineq}. If  $T=\sum_{j=N_1}^{N_2}T_j$ is a BRS operator, i.e., $T$ satisfies conditions \ref{supportCond}--\ref{restStrongLqbound} from Definition \ref{BRSop}, then we have 
    \[
        \|T\vec f\|_{L^r_{B_2^n}(W)}\lesssim_{d,n,p,q,r,\gamma,\kappa} \mathcal{C}\,[W]_{A_t}^{\frac{t'}{r}+\frac{s}{r'}}[W]_{RH_{t,s}}^{\frac{1}{r}+\frac{s}{r'}} \|\vec f\|_{L^r_{B_1^n}(W)}.
    \]
\end{cor}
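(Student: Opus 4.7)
The proposed proof is essentially a chaining of the two main theorems already established in the paper, so the strategy is to verify that the hypotheses of Theorem \ref{pqWeightnormineq} are supplied exactly by the output of Theorem \ref{mainresult}, with an appropriate tracking of the absolute constant.

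First, I would fix $\vec f\in S_{B_1}^n$ and $\vec g\in S_{B_2^*}^n$ and invoke Theorem \ref{mainresult}: since $T=\sum_{j=N_1}^{N_2}T_j$ is a BRS operator with $1<p\leq q<\infty$, and since the hypothesis $p<r<q$ of the corollary forces $p<q$ and in particular ensures the exponents from Definition \ref{BRSop} lie in the admissible range, there exists a $\gamma$-sparse collection $\mathscr Q\subset\mathscr D$ (depending on $\vec f,\vec g$) with
\[
    |\langle T\vec f,\vec g\,\rangle|\;\leq\; C_1\sum_{Q\in\mathscr Q}|Q|\,\llangle\vec f\rrangle_{\textit{\L}^p(Q,B_1)}\cdot\llangle\vec g\rrangle_{\textit{\L}^{q'}(Q,B_2^*)},
\]
where $C_1=K_{d,p,q,\gamma,\kappa}\,\mathcal C\,n^{3/2+1/p+1/q'}$ and $K_{d,p,q,\gamma,\kappa}$ is the implicit constant in Theorem \ref{mainresult}.

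Second, I would feed this inequality into Theorem \ref{pqWeightnormineq} with the indicated choices $t=r/p$, $s=(q/r)'$ (valid because $1\leq p<r<q\leq\infty$) and the weight $W\in A_t\cap RH_{t,s}$. The conclusion of that theorem is precisely that any operator satisfying convex body domination with constant $C_1$ satisfies
\[
    \|T\vec f\|_{L^r_{B_2^n}(W)}\leq C_1 C_2\,[W]_{A_t}^{\frac{t'}{r}+\frac{s}{r'}}[W]_{RH_{t,s}}^{\frac{1}{r}+\frac{s}{r'}}\|\vec f\|_{L^r_{B_1^n}(W)},
\]
with $C_2=C_2(d,n,p,q,r,\gamma)$. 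Substituting $C_1$ from the previous step and absorbing the $n^{3/2+1/p+1/q'}$ factor into the $n$-dependence of the final constant yields the stated bound with implicit constant depending on $d,n,p,q,r,\gamma,\kappa$.

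The main obstacle, and the only issue not already dispatched by assembling these two theorems, is that Theorem \ref{mainresult} produces convex body domination only for test pairs $(\vec f,\vec g)\in S_{B_1}^n\times S_{B_2^*}^n$, whereas the conclusion of the corollary must hold for every $\vec f\in L^r_{B_1^n}(W)$. To close this gap I would invoke the extension argument announced for Section 8, which promotes the domination (hence the weighted norm inequality) from the dense subspace $S_{B_1}$ to the whole space by a limiting/density argument and the norming property of $L^{r'}_{(B_2^*)^n}(W)$-duality recalled before Lemma \ref{convbodytointegralstuff}. Once that extension is in hand the combination is immediate and the corollary follows with no further computation.
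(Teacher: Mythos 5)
Your proposal is correct and follows exactly the route the paper takes: the corollary is literally the composition of Theorem~\ref{mainresult} (producing the $(\textit{\L}^p_{B_1},\textit{\L}^{q'}_{B_2^*})$ convex body domination with constant $\lesssim_{d,p,q,\gamma,\kappa}\mathcal{C}\,n^{3/2+1/p+1/q'}$) with Theorem~\ref{pqWeightnormineq} (which converts such domination into the matrix-weighted bound), absorbing the $n$-powers into the $n$-dependence of the final implicit constant. You are right, and a bit more careful than the paper's one-line proof, to flag that Theorem~\ref{mainresult} is stated only for test pairs in $S_{B_1}^n\times S_{B_2^*}^n$; the density transfer (Proposition~\ref{denseToGlobal}) is precisely what the paper invokes in Section~8 to close that gap.
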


\section{Commutators}
In this section we note how convex body domination for an operator $T$ implies sparse domination for the commutator of $T$. It is known that pointwise convex body domination of the operator $T$ implies certain pointwise convex body domination bounds for the commutator of $T$. See \cite[Theorem 4]{isralowitz_sharp_2021} and \cite[Theorem 1.8]{isralowitz_commutators_2022} for more details. We will not pursue such results in our setting, but instead we will prove that our definition of convex body domination for $T$ implies matrix-weighted bounds also for the commutator of $T$.

We assume that $B$ is a locally integrable matrix-valued function. Then the commutator $[B,T]$ is defined by
\[
    [B,T]\vec f\coloneqq BT\vec f-T[B\vec f].
\]
We will denote the average of $B$ over $Q$ by $\langle B\rangle_Q$ and define 
\[
    \mathcal{A}_\mathscr Q(\vec f,\vec g)\coloneqq \sum_{Q\in\mathscr Q}|Q|\left(\fint_Q |B(x)-\langle B\rangle_Q|_{op}^p|\vec f(x)|_{B_1^n}^p\intD x\right)^\frac{1}{p}\left(\fint_Q |\vec g(y)|_{B_2^*}^{q'}\intD y\right)^\frac{1}{q'}
\]
and 
\[
    \mathcal{A}^*_\mathscr Q(\vec f,\vec g)\coloneqq \sum_{Q\in\mathscr Q}|Q|\left(\fint_Q |B(x)-\langle B\rangle_Q|_{op}^{q'}|\vec g(y)|_{(B_2^*)^n}^{q'}\intD y\right)^\frac{1}{q'}\left(\fint_Q |\vec f(x)|_{B_2^*}^{p}\intD x\right)^\frac{1}{p}.
\]
The above notations and the following theorem are inspired by \cite[Theorem 1.1]{lerner_pointwise_2017}.
\begin{prop}\label{CommutatorSparse}
    Assume that $T$ satisfies $(\avgL^p_{B_1},\avgL^{q'}_{B_2^*})$ convex body domination, then for some sparse collection of dyadic cubes $\mathscr Q$ we have
    \[
        \langle [B,T]\vec f, \vec g\rangle\lesssim \mathcal{A}_\mathscr Q(\vec f,\vec g)+\mathcal{A}^*_\mathscr Q(\vec f,\vec g).
    \]
\end{prop}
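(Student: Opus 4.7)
The starting point is the algebraic commutator identity, valid for every cube $Q$ since a constant matrix commutes through the linear map $T$:
\[
    \langle [B,T]\vec f,\vec g\rangle = \langle T\vec f,(B-\langle B\rangle_Q)^\top\vec g\rangle - \langle T[(B-\langle B\rangle_Q)\vec f],\vec g\rangle.
\]
My plan is to build the sparse family $\mathscr Q$ iteratively via a principal-cube / stopping-time construction in the spirit of Lerner \cite{lerner_pointwise_2017}, arranging that on each principal cube $Q$ the identity is applied with that very $Q$, so the oscillation $B-\langle B\rangle_Q$ appears naturally as demanded by $\mathcal A_\mathscr Q$ and $\mathcal A_\mathscr Q^*$.

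Concretely, fix a large dyadic cube $Q_0\supset \operatorname{supp}\vec f$ and put $Q_0\in\mathscr Q$. Apply the identity with $Q=Q_0$ and feed each of the two resulting pairings into the hypothesized $(\avgL^p_{B_1},\avgL^{q'}_{B_2^*})$ convex body domination of $T$, producing $\gamma$-sparse collections $\mathscr S_1,\mathscr S_2\subset\mathscr D(Q_0)$. Convert the convex body bounds into norm bounds using the elementary estimate
\[
    \llangle \vec u\rrangle_{\avgL^p(Q,B_1)}\cdot\llangle\vec v\rrangle_{\avgL^{q'}(Q,B_2^*)}\lesssim_n \|\vec u\|_{\avgL^p(Q,B_1^n)}\|\vec v\|_{\avgL^{q'}(Q,(B_2^*)^n)}
\]
together with the pointwise inequality $|(B-\langle B\rangle_{Q_0})^\top\vec g|_{(B_2^*)^n}\leq |B-\langle B\rangle_{Q_0}|_{op}|\vec g|_{(B_2^*)^n}$, so that each summand becomes of the form
\[
    |Q|\left(\fint_Q|\vec f|^p\right)^{1/p}\left(\fint_Q|B-\langle B\rangle_{Q_0}|_{op}^{q'}|\vec g|^{q'}\right)^{1/q'}
\]
or its $\vec f\leftrightarrow\vec g$ dual.

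Next, introduce stopping subcubes of $Q_0$: the maximal dyadic $Q'\subsetneq Q_0$ on which at least one of the four averages
\[
    \fint_{Q'}|\vec f|^p,\quad \fint_{Q'}|\vec g|^{q'},\quad \fint_{Q'}|B-\langle B\rangle_{Q_0}|_{op}^p|\vec f|^p,\quad \fint_{Q'}|B-\langle B\rangle_{Q_0}|_{op}^{q'}|\vec g|^{q'}
\]
exceeds a fixed dimensional multiple of its $Q_0$-counterpart. A weak $(1,1)$ bound for the dyadic maximal operator forces $|\bigcup Q'|\leq\tfrac12|Q_0|$, so $Q_0$ is a legitimate sparse cube with $E_{Q_0}:=Q_0\setminus\bigcup Q'$ of measure at least $\tfrac12|Q_0|$. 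Sparse cubes from $\mathscr S_1\cup\mathscr S_2$ avoiding every stopping subcube have all four averages comparable to those over $Q_0$; by sparseness of $\mathscr S_k$, their combined $|Q|$-weighted sums collapse into the single principal summand
\[
    |Q_0|\left(\fint_{Q_0}|B-\langle B\rangle_{Q_0}|_{op}^p|\vec f|^p\right)^{\!1/p}\!\!\left(\fint_{Q_0}|\vec g|^{q'}\right)^{\!1/q'} + (\text{dual}),
\]
which is precisely the $Q_0$-contribution to $\mathcal A_\mathscr Q(\vec f,\vec g) + \mathcal A_\mathscr Q^*(\vec f,\vec g)$. Sparse cubes lying inside some stopping cube $Q'$ are handled by recursing the whole scheme with $Q'$ in place of $Q_0$; the union of all principal cubes across recursion levels is the desired $\tfrac12$-sparse family $\mathscr Q$.

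The main obstacle is calibrating the stopping thresholds so that simultaneously (i) the union of stopping subcubes occupies at most $\tfrac12|Q_0|$, and (ii) each of the four relevant averages on every non-stopping sparse cube is dominated by its $Q_0$-counterpart, so that the combined contribution truly telescopes into a single summand with the oscillation $B-\langle B\rangle_{Q_0}$ (rather than a weaker BMO-type quantity that would spoil the form of $\mathcal A_\mathscr Q$). Apart from this calibration, the argument is a fairly direct Banach-valued, bilinear adaptation of Lerner's scalar commutator scheme in \cite{lerner_pointwise_2017}.
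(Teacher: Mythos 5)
Your overall strategy is genuinely different from the paper's, and it has a gap that I don't see how to close with the stated hypotheses. You try to build the sparse family by a stopping-time / principal-cube recursion à la Lerner: apply the commutator identity at a fixed cube $Q_0$, invoke the assumed convex body domination on the two pairings, split the resulting sparse cubes into non-stopping and stopping, and argue the non-stopping contribution collapses into the single $Q_0$-summand while the stopping contribution is ``handled by recursing the whole scheme.'' The last step is where the argument breaks. The convex body domination hypothesis is a black-box \emph{global} estimate on the bilinear pairing $\langle T\vec u, \vec v\rangle$; it does not localize. After the initial application, the quantity you are left to estimate is the residual
\[
  \sum_{\substack{Q\in\mathscr S_1\cup\mathscr S_2 \\ Q\subset Q'\ \text{for some stopping}\ Q'}} |Q|\,\|\vec f\|_{\avgL^p(Q,B_1^n)}\big\|(B-\langle B\rangle_{Q_0})^\top\vec g\big\|_{\avgL^{q'}(Q,(B_2^*)^n)} + (\text{dual}),
\]
which is a sum of local averages, not a bilinear pairing. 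Re-applying the convex body domination inside a stopping cube $Q'$ produces a bound on a \emph{new} pairing $\langle T[\vec f\mathbbm 1_{Q'}], (B-\langle B\rangle_{Q'})^\top\vec g\mathbbm 1_{Q'}\rangle$ via some \emph{new} sparse family; it says nothing about the residual sum above, whose terms still carry $B-\langle B\rangle_{Q_0}$, not $B-\langle B\rangle_{Q'}$. Trading $B-\langle B\rangle_{Q_0}$ for $B-\langle B\rangle_{Q}$ cube by cube introduces the term $|\langle B\rangle_Q-\langle B\rangle_{Q_0}|_{op}$, which can be of size $\log(|Q_0|/|Q|)\,\|B\|_{BMO}$ and is not summable without further structure. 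In the scalar Lerner--Ombrosi--Rivera-Ríos argument this is resolved by working directly with the Calderón--Zygmund kernel of $T$ and deriving a local oscillation formula from scratch; here we only have the abstract convex body hypothesis and no locality, so that route is unavailable.

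The paper sidesteps all of this with a short algebraic device: stack $\vec F=\begin{bmatrix}\vec f\\ B\vec f\end{bmatrix}$ and $\vec G=\begin{bmatrix}B^\top\vec g\\ -\vec g\end{bmatrix}$, so that $\langle[B,T]\vec f,\vec g\rangle=\langle T\vec F,\vec G\rangle$ becomes a single bilinear pairing in dimension $2n$. One application of the assumed convex body domination (now in dimension $2n$) gives a sparse family $\mathscr Q$, and then Lemma~\ref{convBodyCalc} unpacks $\llangle\vec F\rrangle\cdot\llangle\vec G\rrangle$ into a double integral whose integrand factors as $(B(y)-B(x))\langle\vec f(x),\phi(x)\rangle\cdot\langle\vec g(y),\psi(y)\rangle$. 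The oscillation $B(y)-B(x)$ thus appears automatically and uniformly; the triangle inequality $|B(y)-B(x)|_{op}\leq|B(y)-\langle B\rangle_Q|_{op}+|B(x)-\langle B\rangle_Q|_{op}$ then splits the bound directly into $\mathcal A_{\mathscr Q}$ and $\mathcal A^*_{\mathscr Q}$. No stopping times, no telescoping, no recursion, and the argument is completely insensitive to the choice of $Q_0$. I would suggest abandoning the stopping-time approach and adopting this doubled-dimension trick, which is exactly the sort of flexibility that the convex-body (as opposed to pointwise-sparse) formulation buys.
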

\begin{proof}
We denote $\vec F \coloneqq\begin{bmatrix}\vec f\\B\vec f\end{bmatrix}$ and $\vec G\coloneqq\begin{bmatrix}
        B^\top \vec g\\-\vec g
    \end{bmatrix}$.
Recalling \eqref{extendbiliform} and \eqref{matrixswapbili}, we have 
\[
    \langle [B,T]\vec f, \vec g\rangle=
    \langle T\vec f, B^\top\vec g\rangle-\langle T[B\vec f],\vec g\rangle = \langle T\vec F,\vec G\rangle.
\]
Thus from the $(\avgL^p_{B_1},\avgL^{q'}_{B_2^*})$ convex body domination we get that
\begin{align*}
    \langle [B,T]\vec f, \vec g\rangle &\leq C \sum_{Q\in\mathscr Q} |Q|\llangle\vec F\rrangle_{\avgL^p(Q,B_1)}\cdot \llangle \vec G\rrangle_{\avgL^{q'}(Q,B_2^*)}.
\end{align*}
We estimate the convex bodies via Lemma \ref{convBodyCalc} and get
\begin{align*}
    (1+\varepsilon)^{-2}\llangle\vec F\rrangle_{\avgL^p(Q,B_1)}\cdot \llangle \vec G &\rrangle_{\avgL^{q'}(Q,B_2^*)}\leq\\&\left|\fint_Q\fint_Q \langle \vec F(x),\phi(x)\rangle_{(B_1,B_1^*)}\cdot\langle\vec G(y),\psi(y)\rangle_{(B_2^*,B_2^{**})}\intD x\intD y\right|,
\end{align*}
for some $\phi\in \Bar{B}_{\textit{\L}^{p'}_{B_1^*}(Q)}$ and $\psi\in\Bar{B}_{ \textit{\L}^{q}_{B_2^{**}(Q)}}$.
A simple calculation shows that the integrand is equal to
\begin{align*}
    \Big(B(y)-B(x)\Big)\langle \vec f(x),\phi(x)\rangle_{(B_1,B_1^*)}\cdot\langle \vec g(y),\psi(y)\rangle_{(B_2^*,B_2^{**})}.
\end{align*}
Thus we have
\begin{align*}
    (1+\varepsilon)^{-2}&\llangle\vec F\rrangle_{\avgL^p(Q,B_1)}\cdot \,\llangle \vec G\rrangle_{\avgL^{q'}(Q,B_2^*)}\\&\leq \fint_Q\fint_Q|B(y)-B(x)|_{op}|\langle \vec f(x),\phi(x)\rangle_{(B_1,B_1^*)}\cdot\langle \vec g(y),\psi(y)\rangle_{(B_2^*,B_2^{**})}|\intD x\intD y \\&\leq \fint_Q\fint_Q|B(y)-B(x)|_{op}|\vec f(x)|_{B_1}|\phi(x)|_{B_1^*}|\vec g(y)|_{B_2^*}|\psi(y)|_{B_2^{**}}\intD x\intD y.
\end{align*} 
Then we apply triangle inequality and Hölder's inequality to see that the last expression is bounded by
\begin{align*}
    \left(\fint_Q  |\vec f(x)|_{B_1}^p\intD x\right)^\frac{1}{p}&\left(\fint_Q |B(y)-\langle B\rangle_Q|_{op}^{q'}|\vec g(y)|_{B_2^*}^{q'}\intD x\right)^\frac{1}{q'} \\+&\left(\fint_Q |\vec g(y)|_{B_2^*}^{q'}\intD y\right)^\frac{1}{q'}\left(\fint_Q |B(x)-\langle B\rangle_Q|_{op}|^p\vec f(x)|_{B_1}^p\intD x\right)^\frac{1}{p}.
\end{align*}
Then letting $\varepsilon\to0$ gives us the result.
\end{proof}
We will then assume that the component functions $B_{ij}$ of $B$ are in $\operatorname{BMO}$. We find the $\operatorname{BMO}$ class useful due to the following well known result that follows by the John-Nirenberg inequality and Stirling approximation.
\begin{lem}\label{JNlemma}
    Let $1< a<\infty$ and $b\in \operatorname{BMO}$. Then we have 
    \[
        \left(\fint_Q|b(x)-\langle b\rangle_Q|^a\intD x\right)^\frac{1}{a}\lesssim_{d} a\,\|b\|_{BMO}.
    \]
\end{lem}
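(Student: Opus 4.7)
The plan is to deduce the $L^a$-type control from the exponential decay in the John--Nirenberg inequality via the layer-cake representation, and then to identify the resulting constant through the gamma function and Stirling's approximation.

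First I would recall the John--Nirenberg inequality in the form
\[
    |\{x\in Q\setcolon |b(x)-\langle b\rangle_Q|>\lambda\}|\leq C_1|Q|\exp\left(-\frac{C_2\,\lambda}{\|b\|_{BMO}}\right),\qquad \lambda>0,
\]
where $C_1,C_2$ are dimensional constants. Applying the layer-cake formula
\[
    \int_Q|b(x)-\langle b\rangle_Q|^a\intD x = a\int_0^\infty \lambda^{a-1}|\{x\in Q\setcolon |b(x)-\langle b\rangle_Q|>\lambda\}|\intD \lambda,
\]
substituting the John--Nirenberg bound, and performing the change of variables $u=C_2\lambda/\|b\|_{BMO}$ reduces the right-hand side to a gamma integral:
\[
    \int_Q|b(x)-\langle b\rangle_Q|^a\intD x \leq C_1|Q|\left(\frac{\|b\|_{BMO}}{C_2}\right)^a a\,\Gamma(a) = C_1|Q|\left(\frac{\|b\|_{BMO}}{C_2}\right)^a\Gamma(a+1).
\]

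Taking the $a$-th root and dividing by $|Q|^{1/a}$ yields
\[
    \left(\fint_Q|b(x)-\langle b\rangle_Q|^a\intD x\right)^\frac{1}{a}\leq \frac{C_1^{1/a}}{C_2}\,\Gamma(a+1)^{1/a}\|b\|_{BMO}.
\]
The factor $C_1^{1/a}$ is bounded uniformly in $a\in (1,\infty)$ by a dimensional constant. The remaining task is to control $\Gamma(a+1)^{1/a}$ by a multiple of $a$, and this is where Stirling's approximation enters: Stirling gives $\Gamma(a+1)\leq C\sqrt{a}(a/e)^a$ for $a\geq 1$, so
\[
    \Gamma(a+1)^{1/a}\leq C^{1/a}\,a^{1/(2a)}\,\frac{a}{e}\lesssim a,
\]
uniformly in $a>1$. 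Combining the two inequalities gives the claimed bound. The only subtle point is verifying that the multiplicative constants from John--Nirenberg, the gamma-function substitution, and Stirling all absorb into the implicit $d$-dependent constant, and that the factor of $a$ is genuinely linear rather than, say, $a\log a$; both are straightforward consequences of the explicit form of Stirling's formula.
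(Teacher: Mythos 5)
Your proof is correct and follows exactly the route the paper signals: the paper gives no proof of this lemma, noting only that it is a "well known result that follows by the John--Nirenberg inequality and Stirling approximation," and your layer-cake argument with the gamma-integral and Stirling's bound $\Gamma(a+1)^{1/a}\lesssim a$ is precisely that standard deduction, with the constants tracked cleanly.
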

Now we are ready to show the sparse bound for the commutator $[B,T]$.
By the properties of the standard orthonormal basis of $\R^n$ we have
\[
    |B(x)-\langle B\rangle_Q|_{op}\eqsim_{n} \max_{i,j}|B_{ij}(x)-\langle B_{ij}\rangle_Q|.
\]
Then applying the Hölder inequality and Lemma \ref{JNlemma} we get that for any $u>1$ it holds that
\begin{align*}
    &\left(\fint_Q |B(x)-\langle B\rangle_Q|_{op}^p|\vec f(x)|_{B_1^n}^p\intD x\right)^\frac{1}{p}\\&\qquad\leq
    \left(\fint_Q |B(x)-\langle B\rangle_Q|_{op}^{pu'}\intD x\right)^\frac{1}{pu'}\left(\fint_Q|\vec f(x)|_{B_1^n}^{pu}\intD x\right)^\frac{1}{pu}
    \\&\qquad\eqsim_n \max_{i,j}
    \left(\fint_Q |B_{ij}(x)-\langle B_{ij}\rangle_Q|^{pu'}\intD x\right)^\frac{1}{pu'}\left(\fint_Q|\vec f(x)|_{B_1^n}^{pu}\intD x\right)^\frac{1}{pu}
    \\&\qquad\lesssim_{d,p}u'\max_{i,j}
    \|B_{ij}\|_{BMO} \left(\fint_Q|\vec f(x)|_{B_1^n}^{pu}\intD x\right)^\frac{1}{pu}.
\end{align*}
Similarly for the other term we have 
\begin{align*}
    &\left(\fint_Q |B(y)-\langle B\rangle_Q|_{op}^{q'}|\vec g(y)|_{(B_2^*)^n}^{q'}\intD y\right)^\frac{1}{q'}\\&\qquad\qquad\qquad\lesssim_{d,n,q} 
    u'\max_{i,j}\|B_{ij}\|_{BMO} \left(\fint_Q|\vec g(y)|_{(B_2^*)^n}^{q'u}\intD x\right)^\frac{1}{q'u}
\end{align*}
and thus by Proposition \ref{CommutatorSparse} we get a sparse domination inequality
\[
    \langle [B,T]\vec f, \vec g\rangle\lesssim_{d,n,p,q,u}\max_{i,j}\|B_{ij}\|_{BMO}\sum_{Q\in\mathscr Q}|Q|\|\vec f\|_{\avgL^{pu}(Q,B_1^n)}\|\vec g\|_{\avgL^{q'u}(Q,(B_2^*)^n)},\numberthis\label{sparseDomCom}
\]
where the dependence on $u$ is $u'$. We note that for the multi-scale operator that satisfies \ref{supportCond}--\ref{restStrongLqbound}, the sparse domination of commutators is new even in the scalar case. It is well documented, see \cite[Proposition 6.4]{bernicot_sharp_2016}, that with $pu<r<(q'u)'$ and $w\in A_{\frac{r}{pu}}\cap RH_{(\frac{(q'u)'}{r})'}$ the sparse bound \eqref{sparseDomCom} implies a norm inequality in $L^r(w)$ for the commutator of $T$. If $u$ is close enough to one, then by the self improvement properties of the Muckenhoupt and the reverse Hölder classes, it even suffices to take  $w\in A_{\frac{r}{p}}\cap RH_{(\frac{q}{r})'}$. However, the dependence on $u$ will increase the exponent of the weight constant. We will see that these ideas can be extended to the matrix-weighted case.

Our next goal is to adapt the proofs of Theorem \ref{pqWeightnormineq} and Proposition \ref{CommutatorSparse} to get a matrix-weighted norm inequality for the commutator $[B,T]$. To do this we will need the following elementary result.
\begin{lem}\label{ComNumbersLemma}
    Let $1<r<q<\infty$, $0<\vartheta<\frac{1}{2}$, $s=\left(\frac{q}{r}\right)'=\frac{q}{q-r}$ and $u=1+C_{q,r}\vartheta$ with $C_{q,r}=\frac{q-r}{r(q-1)+q(r-1)}$. Then
    \[
        r'-q'u\left(\frac{rs(1+\vartheta)}{q'u}\right)'\gtrsim_{q,r} \vartheta.
    \]
\end{lem}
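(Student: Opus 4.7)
The plan is to reduce the claim to an explicit rational function of $\vartheta$ that vanishes cleanly at $\vartheta = 0$ with a positive derivative. First I would use the identities $s = \frac{q}{q-r}$ and $q' = \frac{q}{q-1}$ to simplify the inner quantity
\[
    \tau \coloneqq \frac{rs(1+\vartheta)}{q'u} = \frac{r(q-1)(1+\vartheta)}{(q-r)u},
\]
and then rewrite the left-hand side as a single fraction via
\[
    r' - q'u\tau' = r' - \frac{q'u\tau}{\tau-1} = \frac{(r'-q'u)\tau - r'}{\tau - 1}.
\]
A sanity check at $\vartheta = 0$, $u = 1$ gives $\tau = \frac{r(q-1)}{q-r}$ and, using $r' - q' = \frac{q-r}{(r-1)(q-1)}$, one sees that the numerator vanishes. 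So the whole task is to identify the leading order of the numerator in $\vartheta$.

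Next I would expand $(r' - q'u)\tau - r'$ with $u = 1 + C_{q,r}\vartheta$. Writing
\[
    r' - q'u = \frac{(q-r) - q(r-1)C_{q,r}\vartheta}{(r-1)(q-1)},
\]
plugging in $\tau$, and collecting terms, one arrives at
\[
    (r' - q'u)\tau - r' = \frac{r\vartheta}{u(r-1)(q-r)}\Bigl[(1-C_{q,r})(q-r) - (r-1)q(1+\vartheta)C_{q,r}\Bigr].
\]
The bracket simplifies to $(q-r) - C_{q,r}\bigl[r(q-1) + q(r-1)\vartheta\bigr]$. Here the specific value $C_{q,r} = \frac{q-r}{r(q-1)+q(r-1)}$ enters: the constant piece $(q-r) - C_{q,r}\cdot r(q-1)$ collapses to $\frac{(q-r)q(r-1)}{r(q-1)+q(r-1)}$, and the remaining $\vartheta$ term contributes $-\frac{(q-r)q(r-1)\vartheta}{r(q-1)+q(r-1)}$. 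Therefore the bracket equals $\frac{(q-r)q(r-1)(1-\vartheta)}{r(q-1)+q(r-1)}$, which gives the clean factorization
\[
    (r' - q'u)\tau - r' = \frac{rq\,\vartheta(1-\vartheta)}{u\,[r(q-1)+q(r-1)]}.
\]

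Finally, for $0 < \vartheta < \frac{1}{2}$, both $1 - \vartheta$ and $u^{-1}$ are bounded below by constants depending only on $q,r$, while
\[
    \tau - 1 = \frac{r(q-1)(1+\vartheta) - (q-r)u}{(q-r)u} \leq \frac{3r(q-1)}{2(q-r)}
\]
gives an upper bound on $\tau - 1$ depending only on $q,r$. Dividing the factorization of $(r'-q'u)\tau - r'$ by $\tau - 1$ yields $r' - q'u\tau' \geq C(q,r)\,\vartheta$ for some explicit positive constant $C(q,r)$, which is the claim.

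The essential obstacle is the algebra in the middle step: the constant $C_{q,r}$ is tuned exactly so that $(q-r) - C_{q,r}r(q-1)$ produces a positive coefficient of $\vartheta$ and the residual $\vartheta^2$ term combines into $-\vartheta$ inside a factor $(1-\vartheta)$, rather than reinforcing the $\vartheta$ term with the wrong sign. Once this cancellation is identified, the remaining bounds on $u$ and $\tau - 1$ are routine.
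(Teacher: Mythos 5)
Your proof is correct and follows essentially the same route as the paper: both compute the left-hand side as a rational expression in $\vartheta$ whose numerator factors cleanly as $\vartheta(1-\vartheta)$ times a positive constant (using the specific choice of $C_{q,r}$), and then bound the denominator from above by a constant depending only on $q,r$. The only cosmetic difference is that you package the computation in terms of $\tau$ and $\tau-1$ whereas the paper writes it as $N/D$ with $D = r(q-1)(1+\vartheta)-u(q-r) = (q-r)u(\tau-1)$, so the two are equivalent.
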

\begin{proof}
    We start by calculating that
    \[
        q'u\left(\frac{rs(1+\vartheta)}{q'u}\right)'=r'\frac{(r-1)q(1+\vartheta)u}{r(q-1)(1+\vartheta)-u(q-r)}.
    \]
    Thus, we get
    \begin{align*}
        r'-q'u\left(\frac{rs(1+\vartheta)}{q'u}\right)'&=r'\left(1-\frac{(r-1)q(1+\vartheta)u}{r(q-1)(1+\vartheta)-u(q-r)}\right)\\&\eqsim_r 1-\frac{(r-1)q(1+\vartheta)(1+C_{q,r}\vartheta)}{r(q-1)(1+\vartheta)-(1+C_{q,r}\vartheta)(q-r)}\\&\eqqcolon\frac{N}{D} 
    \end{align*}
    The denominator $D$ is clearly less than $2r(q-1)$ and for the numerator $N$ we have
    \begin{align*}
        N&=r(q-1)(1+\vartheta)-(1+C_{q,r}\vartheta)(q-r)-(r-1)q(1+\vartheta)(1+C_{q,r}\vartheta)\\&=\vartheta C_{q,r}q(r-1)(1-\vartheta)\\&\gtrsim_{q,r}\vartheta.
    \end{align*}
    The second equality above is due to the definition of $C_{q,r}$.
\end{proof}
Now we are ready to show that convex body domination for the operator $T$ implies matrix-weighted bounds also for the commutator $[B,T]$. This result was inspired by similar results from \cite{isralowitz_sharp_2021} and \cite{isralowitz_commutators_2022}.
\begin{thm}\label{ComMatWghtNormIneq}
    Let $1\leq p<r<q\leq\infty$, $t=\frac{r}{p}$ , $s=\left(\frac{q}{r}\right)'$ and $W\in A_t\cap RH_{t,s}$. If $T$ satisfies $(\avgL^p_{B_1},\avgL^{q'}_{B_2^*})$ convex body domination and the components of the matrix $B$ are in $\operatorname{BMO}$, then 
    \begin{align*}
        \|[B,T]&\|_{L_{B_1^n}^r(W)\to L_{B_2^n}^r(W)}\leq \\&\qquad C\max_{i,j}\|B_{i,j}\|_{BMO} \left([W]_{A_t}^s[W]_{RH_{t,s}}^{s}+[W]_{A_t}^{\frac{1}{t-1}}\right)[W]_{A_t}^{\frac{t'}{r}+\frac{s}{r'}}[W]_{RH_{t,s}}^{\frac{1}{r}+\frac{s}{r'}},
    \end{align*}
    where $C=C_{n,d,p,q,r,\gamma}$ is a constant.
\end{thm}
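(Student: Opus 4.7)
The plan is to couple the scalar sparse domination \eqref{sparseDomCom} with the matrix-weight analysis of Theorem \ref{pqWeightnormineq}, carried out with the inflated Lebesgue exponents $pu$ and $q'u$ for a parameter $u>1$ that will be tuned to $W$. By duality, writing $S\coloneqq W^{1/r}[B,T]W^{-1/r}$ and taking $\vec h\in L^r_{B_1^n}$ with $\vec\phi\in L^{r'}_{(B_2^*)^n}$, we must bound $|\langle S\vec h,\vec\phi\rangle|=|\langle [B,T](W^{-1/r}\vec h),W^{1/r}\vec\phi\rangle|$. Applying \eqref{sparseDomCom} with parameter $u$ and recalling $M\coloneqq\max_{i,j}\|B_{ij}\|_{BMO}$ gives, for some $\gamma$-sparse collection $\mathscr Q$,
\[
|\langle S\vec h,\vec\phi\rangle|\lesssim u'M\sum_{Q\in\mathscr Q}|Q|\,\|W^{-\frac{1}{r}}\vec h\|_{\avgL^{pu}(Q,B_1^n)}\,\|W^{\frac{1}{r}}\vec\phi\|_{\avgL^{q'u}(Q,(B_2^*)^n)}.
\]

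The next step imitates the estimates following Lemma \ref{convbodytointegralstuff}. Insert the reducing matrices $A_{Q,t}$ and $B_{Q,t}$ via the pointwise bounds $|W^{-1/r}(x)\vec h(x)|\leq|W^{-1/r}(x)B_{Q,t}^{1/p}|_{op}|B_{Q,t}^{-1/p}\vec h(x)|$ and the analogous inequality with $A_{Q,t}^{-1/p}$ for $W^{1/r}\vec\phi$. Inside each local average, apply H\"older to separate the operator-norm piece from the vector piece, then use the Cordes inequality (Lemma \ref{Cordesineq}) exactly as in Theorem \ref{pqWeightnormineq} to replace $|W^{\mp 1/r}A_{Q,t}^{\pm 1/p}|_{op}^{pu}$ and $|W^{1/r}A_{Q,t}^{-1/p}|_{op}^{q'u}$ by $|W^{\mp 1/t}B_{Q,t}^{\pm 1}|_{op}^{u}$ and $|W^{1/t}A_{Q,t}^{-1}|_{op}^{q'u/p}$ up to the master factor $|A_{Q,t}^{1/p}B_{Q,t}^{1/p}|_{op}\eqsim[W]_{A_t}^{1/r}$. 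The resulting operator-norm integrals are controlled by Lemma \ref{redOpRevHölderLemma} (with exponent parameter $1+\sigma$, $\sigma=(2^{d+1}[\Sigma]^{Sc}_{A_{t'}}-1)^{-1}$) on the $\vec h$ side and by a reverse-H\"older application (with exponent parameter $1+\vartheta$, $\vartheta=(2^{d+1}[(AW^{Sc}_{Q,t})^s]_{A_\infty}-1)^{-1}$) together with the $RH_{t,s}$ hypothesis on the $\vec\phi$ side, producing exactly the factor $[W]_{A_t}^{t'/r+s/r'}[W]_{RH_{t,s}}^{1/r+s/r'}$ that appears in Theorem \ref{pqWeightnormineq}.

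The residual vector factors take the form of sparse sums $\bigl(\sum_Q |Q|\,[\fint_Q|B_{Q,t}^{-1/p}\vec h|^{a}]^{r/a}\bigr)^{1/r}$ and its $\vec\phi$-counterpart, with $a=pu\lambda_1$ and $b=q'u\lambda_2$ where $\lambda_i$ come from the auxiliary H\"olders. These are dominated by $\|M^{\mathscr D}_a\vec h\|_{L^r}$ and $\|M^{\mathscr D}_b\vec\phi\|_{L^{r'}}$ using $\gamma$-sparseness, which in turn are controlled by $\|\vec h\|_{L^r}$ and $\|\vec\phi\|_{L^{r'}}$ provided $a<r$ and $b<r'$. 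The condition $b<r'$ with a quantitative gap of order $\vartheta$ is precisely the content of Lemma \ref{ComNumbersLemma}, as long as $u\leq 1+C_{q,r}\vartheta$; by an entirely parallel computation in the variables $p,t,\sigma$, the condition $a<r$ with gap of order $\sigma$ is ensured by $u\leq 1+C_{p,t}\sigma$. Choosing therefore $u=1+c\min\{\sigma,\vartheta\}$ for a suitably small constant $c=c_{p,q,r}$, Lemma \ref{numbersLemma} converts the resulting $(r-a)^{-1/a}$ and $(r'-b)^{-1/b}$ into harmless powers absorbed into the already-present weight factors.

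What remains is the extra contribution of $u'=1+(c\min\{\sigma,\vartheta\})^{-1}\lesssim \sigma^{-1}+\vartheta^{-1}$. Following the endgame of Theorem \ref{pqWeightnormineq}, $\sigma^{-1}\lesssim[\Sigma]^{Sc}_{A_{t'}}\lesssim[W]_{A_t}^{1/(t-1)}$, while Lemma \ref{scarevHölrmrk} together with $[AW^{Sc}_{Q,t}]_{RH_s}\leq[W]_{RH_{t,s}}$ and $[AW^{Sc}_{Q,t}]_{A_\infty}\lesssim[W]_{A_t}$ give $\vartheta^{-1}\lesssim\bigl([W]_{RH_{t,s}}[W]_{A_t}\bigr)^{s}$. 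Summing these two bounds reproduces the factor $[W]_{A_t}^{s}[W]_{RH_{t,s}}^{s}+[W]_{A_t}^{1/(t-1)}$ multiplying $M$ and the main weight constant, as claimed.

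The main obstacle is the simultaneous balancing of the two H\"older parameters on the $\vec h$ and $\vec\phi$ sides: the inflation factor $u$ must be small enough that both $pu\lambda_1<r$ and $q'u\lambda_2<r'$ survive after the Cordes/reverse-H\"older step, but large enough that the resulting $u'$ captures the correct commutator-type weight dependence. Tracking the quantitative gap supplied by Lemma \ref{ComNumbersLemma} (and its $p,t,\sigma$-analogue) is what makes the choice $u=1+c\min\{\sigma,\vartheta\}$ produce exactly the desired sum $[W]_{A_t}^{s}[W]_{RH_{t,s}}^{s}+[W]_{A_t}^{1/(t-1)}$, rather than a product of the two factors.
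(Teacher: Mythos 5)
Your plan is to start from the already-decoupled scalar sparse bound \eqref{sparseDomCom} and then run the weight machinery of Theorem \ref{pqWeightnormineq} on top of it. That is not what the paper does, and the route you propose breaks in an essential way. The paper's proof of Theorem \ref{ComMatWghtNormIneq} does not pass through \eqref{sparseDomCom}: it re-derives a \emph{coupled} estimate, using Lemma \ref{convBodyCalc} with the weights already inserted, so that the factor $|W^{-1/r}(x)W^{1/r}(y)|_{op}$ is retained inside the double $\fint_Q\fint_Q$ integral. It is only with this coupled factor that the insertion of the reducing matrices $A_{Q,t},B_{Q,t}$ leaves residuals $|\vec f(x)|,|\vec g(y)|$ with \emph{no} $Q$-dependent matrix attached, and the sparse sum can then be closed via $\mathcal M^{\mathscr D}_a$. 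By contrast, \eqref{sparseDomCom} has already applied a Cauchy--Schwarz step that destroys the coupling: it produces the product $\|\vec f\|_{\avgL^{pu}(Q)}\|\vec g\|_{\avgL^{q'u}(Q)}$, and once you substitute $\vec f=W^{-1/r}\vec h$, $\vec g=W^{1/r}\vec\phi$ there is no way to make $W^{-1/r}(x)$ and $W^{1/r}(y)$ meet.

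Concretely, when you insert the reducing matrices into $\|W^{-1/r}\vec h\|_{\avgL^{pu}(Q)}$ the residual is necessarily of the form $|B_{Q,t}^{\pm 1/p}\vec h(x)|$, and in the next paragraph you assert that $\bigl(\sum_Q|Q|\,[\fint_Q|B_{Q,t}^{-1/p}\vec h|^a]^{r/a}\bigr)^{1/r}\lesssim\|\mathcal M^{\mathscr D}_a\vec h\|_{L^r}$ ``using $\gamma$-sparseness.'' This step is false: the matrix $B_{Q,t}^{-1/p}$ depends on $Q$ and its operator norm is unbounded over $\mathscr Q$, so you cannot drop it and reduce to the raw vector $\vec h$. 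There is no analogue of this difficulty in the coupled argument, because there the $Q$-dependent matrices all pair with $|W^{-1/r}(x)W^{1/r}(y)|_{op}$ via $|A_{Q,t}^{1/p}B_{Q,t}^{1/p}|_{op}\eqsim[W]_{A_t}^{1/r}$ and the residuals are $Q$-free. (The same obstruction shows that dropping the reducing-matrix insertion altogether also fails: $\|W^{-1/r}\vec h\|_{L^r}$ has no relation to $\|\vec h\|_{L^r}$ --- take $W=\operatorname{diag}(\varepsilon,1)$, $\vec h=(1,0)$.) There is also a secondary sign error: the quantity controlled by Lemma \ref{redOpRevHölderLemma} and Cordes is $|B_{Q,t}^{-1/p}W^{-1/r}(x)|_{op}$, not $|W^{-1/r}(x)B_{Q,t}^{1/p}|_{op}=|B_{Q,t}^{1/p}W^{-1/r}(x)|_{op}$, so your pointwise factorization has the wrong power of $B_{Q,t}$; but correcting this only trades $B_{Q,t}^{-1/p}\vec h$ for $B_{Q,t}^{1/p}\vec h$ and does not remove the $Q$-dependence. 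To fix the argument you must imitate the paper and run the commutator decomposition inside the convex-body duality (as in Proposition \ref{CommutatorSparse}, but applied to $\vec f_W^-$, $\vec g_W^+$) before estimating anything in operator norm, keeping the coupled $|W^{-1/r}(x)W^{1/r}(y)|_{op}$; only then do your Cordes, $RH_{t,s}$, and the $u=1+C_{q,r}\vartheta$ device from Lemma \ref{ComNumbersLemma} go through, and the two summands $[W]_{A_t}^s[W]_{RH_{t,s}}^s$ and $[W]_{A_t}^{1/(t-1)}$ arise from the two symmetric pieces $\mathcal A_{\mathscr Q,W}$ and $\mathcal A^*_{\mathscr Q,W}$ rather than from a single $u=1+c\min\{\sigma,\vartheta\}$.
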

\begin{proof}
     We denote $\vec f_{W}^{\,-}\coloneqq W^{-\frac{1}{r}}\vec f$, $\vec g_{W}^{\,+}\coloneqq W^\frac{1}{r}\vec g$ and 
    \[
        \mathscr{I}^{\phi,\psi}_{B,f,g}(x,y)\coloneqq |B(y)-B(x)|_{op}|\langle \vec f_{W}^{\,-}(x) ,\phi(x)\rangle_{(B_1,B_1^*)}\cdot\langle \vec g_{W}^{\,+}(y),\psi(y)\rangle_{(B_2^*,B_2^{**})}|,
    \] 
    where $\phi\in \Bar{B}_{\textit{\L}^{p'}_{B_1^*}(Q)}$ and $\psi\in\Bar{B}_{ \textit{\L}^{q}_{B_2^{**}(Q)}}$.
    Similarly as in the proof of Proposition \ref{CommutatorSparse}, for any $\varepsilon>0$ we have
    \begin{align*}
        \langle W^\frac{1}{r}[B,T](W^{-\frac{1}{r}}\vec f),\vec g\rangle&=\langle [B,T]\vec f_{W}^{\,-},\vec g_{W}^{\,+}\rangle\\&\lesssim (1+\varepsilon)^2\sum_{Q\in\mathscr Q}|Q|\fint_Q\fint_Q\mathscr{I}^{\phi,\psi}_{B,f,g}(x,y)\intD x\intD y.
    \end{align*}
    We note that $\mathscr{I}^{\phi,\psi}_{B,f,g}(x,y)$ is bounded from above by
    \[
          |B(y)-B(x)|_{op}\left|W^{-\frac{1}{r}}(x) W^\frac{1}{r}(y)\right|_{op}|\vec f(x)|_{B_1^n}|\phi(x)|_{B_1^*}|\vec g(y)|_{(B_2^*)^n}|\psi(y)|_{B_2^{**}}.
    \]
    Thus applying triangle inequality $|B(x)-B(y)|\leq |B(x)-\langle B\rangle_Q|+|B(y)-\langle B\rangle_Q|$ and Hölder's inequality yields
    \[
        \sum_{Q\in\mathscr Q}|Q|\fint_Q\fint_Q\mathscr{I}_{B,f,g}(x,y)\intD x\intD y\leq \mathcal{A}_{\mathscr Q, W}(\vec f,\vec g)+\mathcal{A}_{\mathscr Q, W}^*(\vec f,\vec g),
    \]
    where
    \begin{align*}
        \mathcal{A}_{\mathscr Q, W}(\vec f,\vec g)\coloneqq \sum_{Q\in\mathscr Q}|Q|\Bigg(\fint_Q &|B(y)-\langle B\rangle_Q|_{op}^{q'}|\vec g(y)|_{(B_2^*)^n}^{q'}\\&\left[\fint_Q\left|W^{-\frac{1}{r}}(x) W^\frac{1}{r}(y)\right|_{op}^p|\vec f(x)|^p_{B_1^n}\intD x\right]^\frac{q'}{p} \intD y\Bigg)^\frac{1}{q'}
    \end{align*}
    and 
    \begin{align*}
        \mathcal{A}_{\mathscr Q, W}^*(\vec f,\vec g)\coloneqq \sum_{Q\in\mathscr Q}|Q|\Bigg(\fint_Q &|B(x)-\langle B\rangle_Q|_{op}^{p}|\vec f(x)|_{B_1^n}^{p}\\&\left[\fint_Q\left|W^{-\frac{1}{r}}(x) W^\frac{1}{r}(y)\right|_{op}^{q'}|\vec g(y)|^{q'}_{(B_2^*)^n}\intD y\right]^\frac{p}{q'} \intD x\Bigg)^\frac{1}{p}.
    \end{align*}
    Letting $\varepsilon\to0$ gives \begin{equation}\label{comFirstStep}
        \langle W^\frac{1}{r}[B,T](W^{-\frac{1}{r}}\vec f),\vec g\rangle\lesssim \mathcal{A}_{\mathscr Q, W}(\vec f,\vec g)+\mathcal{A}_{\mathscr Q, W}^*(\vec f,\vec g)
    \end{equation}
    and we are left with bounding $\mathcal{A}_{\mathscr Q,W}$ and $\mathcal{A}^*_{\mathscr Q,W}$.
    
    We will consider $\mathcal{A}_{\mathscr Q,W}$ first. The term inside the sum is dominated by
    \begin{align*}
    |Q|\left|A_{Q,t}^\frac{1}{p}B_{Q,t}^\frac{1}{p}\right|_{op}&\left(\fint_Q |\vec f(x)|^{p}_{B_1^n}\left|B^{-\frac{1}{p}}_{Q,t}W^{-\frac{1}{r}}(x)\right|_{op}^{p}\intD x \right)^\frac{1}{p}\\&\qquad\qquad\left(\fint_Q |B(y)-\langle B\rangle_Q|^{q'}_{op}|\vec g(y)|^{q'}_{(B_2^*)^n}\left|A^{-\frac{1}{p}}_{Q,t}W^\frac{1}{r}(y)\right|_{op}^{q'}\intD y \right)^\frac{1}{q'},
\end{align*}
where $A_{Q,t}$ and $B_{Q,t}$ are the reducing matrices that we defined in the beginning of Section 6. Similarly as in the proof of Theorem \ref{pqWeightnormineq} we apply the Cordes inequality (Lemma \ref{Cordesineq}) and the Hölder inequality to get  
\begin{align*}
    \mathcal{A}_{\mathscr Q,W}(\vec f,\vec g)\lesssim [W]_{A_t}^\frac{1}{r}S_{\vec f}\,\tilde S_{\vec g,B},
\end{align*}
where
\[
    S_{\vec f}\coloneqq \left(\sum_{Q\in\mathscr Q}|Q|\left[\fint_Q |\vec f(x)|^{p}_{B_1^n}\left|B^{-1}_{Q,t}W^{-\frac{1}{t}}(x)\right|_{op}\intD x \right]^\frac{r}{p}\right)^\frac{1}{r}
\]
and 
\[
    \tilde S_{\vec g,B} \coloneqq \left(\sum_{Q\in\mathscr Q}|Q|\left[\fint_Q |B(y)-\langle B\rangle_Q|^{q'}_{op}|\vec g(y)|^{q'}_{(B_2^*)^n}\left|A^{-1}_{Q,t}W^{\frac{1}{t}}(x)\right|_{op}^\frac{q'}{p}\intD x \right]^\frac{r'}{q'}\right)^\frac{1}{r'}.
\]
In the proof of Theorem \ref{pqWeightnormineq} we showed that
\[
    S_{\vec f} \lesssim_{d,n,p,r,\gamma} [W]_{A_t}^\frac{1}{r(t-1)}\|\vec f\|_{L^r_{B_1^n}}.
\]
In order to bound $\tilde S_{\vec g,B}$ we let \[\vartheta\coloneqq\frac{1}{2^{d+1}[(AW^{Sc}_{Q,t})^s]_{A_\infty}-1} \quad\text{ with } \quad AW^{Sc}_{Q,t}(y)\coloneqq\left|A^{-1}_{Q,t}W^\frac{1}{t}(y)\right|_{op}^{t}\]
and $u\coloneqq1+C_{q,r}\vartheta$ with $C_{q,r}\coloneqq\frac{q-r}{r(q-1)+q(r-1)}$.
Applying the Hölder inequality and Lemma \ref{JNlemma} to the term inside the sum in $\tilde S_{\vec g,B}$ we get
\begin{align*}
    &\left[\fint_Q |B(y)-\langle B\rangle_Q|^{q'}_{op}|\vec g(y)|^{q'}_{(B_2^*)^n}\left|A^{-1}_{Q,t}W^\frac{1}{t}(y)\right|_{op}^\frac{q'}{p}\intD y \right]^\frac{1}{q'}\\&\qquad\quad\lesssim_{d,n,q} u'\max_{i,j}\|B_{i,j}\|_{BMO} \left[\fint_Q |\vec g(y)|^{q'u}_{(B_2^*)^n}\left|A^{-\frac{1}{p}}_{Q,t}W^\frac{1}{r}(y)\right|_{op}^\frac{q'u}{p}\intD y \right]^\frac{1}{q'u}.
\end{align*}
 Furthermore, Lemma \ref{scarevHölrmrk} implies that
\[  
    u'\lesssim_{q,r}\vartheta^{-1} \lesssim_{d,n,p,q,r} ([W]_{RH_{t,s}}[W]_{A_t})^s.
\]
Then we apply Hölder's inequality to get 
\begin{align*}
    \left[\fint_Q |\vec g(y)|^{q'u}_{(B_2^*)^n}\left|A^{-\frac{1}{p}}_{Q,t}W^\frac{1}{r}(y)\right|_{op}^\frac{q'u}{p}\intD y \right]^\frac{1}{q'u}\leq \left(\fint_Q \left|A^{-1}_{Q,t}W^\frac{1}{t}(y)\right|_{op}^{ts(1+\vartheta)}\intD y \right)^{\frac{1}{ts(1+\vartheta)}\frac{1}{p}}&\\\left(\fint_Q |\vec g(y)|^{b}_{(B_2^*)^n}\intD y\right)^\frac{1}{b},
\end{align*}
where
\[
    b\coloneqq q'u\left(\frac{pts(1+\vartheta)}{q'u}\right)'=q'u\left(\frac{rs(1+\vartheta)}{q'u}\right)'.
\]
We already showed in the proof of Theorem \ref{pqWeightnormineq} that
\[  
    \left(\fint_Q \left|A^{-1}_{Q,t}W^\frac{1}{t}(y)\right|_{op}^{ts(1+\vartheta)}\intD y \right)^{\frac{1}{ts(1+\vartheta)}\frac{1}{p}}\lesssim_n[W]^\frac{1}{t}_{RH_{t,s}}
\]
and by Lemma \ref{ComNumbersLemma} we have
\[
    \sum_{Q\in\mathscr Q}\left(\fint_Q |\vec g(y)|^{b}_{(B_2^*)^n}\intD y\right)^\frac{1}{b}\lesssim_{\gamma} \left(\frac{r'}{r'-b}\right)^\frac{1}{b}\|\vec g\|_{L^{r'}_{(B_2^*)^n}}\lesssim_{q,r} \vartheta^{-\frac{1}{b}}\|\vec g\|_{L^{r'}_{(B_2^*)^n}}.
\]
Furthermore, applying Lemmas \ref{numbersLemma} and \ref{scarevHölrmrk} yields
\[
    \vartheta^{-\frac{1}{b}}\leq \vartheta^{-\frac{1}{q'u\left(\frac{rs}{q'u}\right)'}}\leq \vartheta^{-\frac{1}{q'\left(\frac{rs}{q'}\right)'}}=\vartheta^{-\frac{1}{r'}}\lesssim_{d,n,p,q,r} \left([W]_{RH_{t,s}}[W]_{A_t}\right)^\frac{s}{r'}.
\]
We have now shown that 
\[
    \mathcal{A}_{\mathscr Q,W}(\vec f,\vec g)\lesssim_{d,n,p,q,r,\gamma}\max_{i,j}\|B_{i,j}\|_{BMO} [W]_{A_t}^{s+\frac{t'}{r}+\frac{s}{r'}}[W]_{RH_{t,s}}^{s+\frac{1}{r}+\frac{s}{r'}}\|\vec g\|_{L^{r'}_{(B_2^*)^n}}\|\vec f\|_{L^r_{B_1^n}}.
\]
Similar calculations shows that for the term $\mathcal{A}_{\mathscr Q,W}^*$ we have
\[
    \mathcal{A}^*_{\mathscr Q,W}(\vec f,\vec g)\lesssim_{d,n,p,q,r,\gamma}\max_{i,j}\|B_{i,j}\|_{BMO} [W]_{A_t}^{\frac{1}{t-1}+\frac{t'}{r}+\frac{s}{r'}}[W]_{RH_{t,s}}^{\frac{1}{r}+\frac{s}{r'}}\|\vec g\|_{L^{r'}_{(B_2^*)^n}}\|\vec f\|_{L^r_{B_1^n}}.
\]
Combining the above bounds with \eqref{comFirstStep} we get
\begin{align*}
    \langle& W^\frac{1}{r}[B,T](W^{-\frac{1}{r}}\vec f),\vec g\rangle\lesssim_{d,n,p,q,r,\gamma}\\&\max_{i,j}\|B_{i,j}\|_{BMO} \|\vec f\|_{L^r_{B_1^n}}\|\vec g\|_{L^{r'}_{(B_2^*)^n}}\left([W]_{A_t}^s[W]_{RH_{t,s}}^{s}+[W]_{A_t}^{\frac{1}{t-1}}\right)[W]_{A_t}^{\frac{t'}{r}+\frac{s}{r'}}[W]_{RH_{t,s}}^{\frac{1}{r}+\frac{s}{r'}}
\end{align*}
and duality completes the proof.
\end{proof}
Theorem \ref{mainresult} combined with Theorem \ref{ComMatWghtNormIneq} yields the following corollary.
\begin{cor}\label{ComTweightCor}
    Let $p,r,q,t$ and $s$ be as in Theorem \ref{ComMatWghtNormIneq}. If  $T=\sum_{j=N_1}^{N_2}T_j$ is a BRS operator, i.e., $T$ satisfies conditions \ref{supportCond}--\ref{restStrongLqbound} from Definition \ref{BRSop}, then we have
    \begin{align*}
        \|[B,T]&\|_{L^r_{B_1^n}(W)\to L^r_{B_2^n}(W)}\lesssim_{d,n,p,q,r,\gamma,\kappa} \\&\qquad\qquad\mathcal{C}\,\max_{i,j}\|B_{i,j}\|_{BMO} \left([W]_{A_t}^s[W]_{RH_{t,s}}^{s}+[W]_{A_t}^{\frac{1}{t-1}}\right)[W]_{A_t}^{\frac{t'}{r}+\frac{s}{r'}}[W]_{RH_{t,s}}^{\frac{1}{r}+\frac{s}{r'}}.
    \end{align*}
\end{cor}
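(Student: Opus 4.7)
The statement follows by chaining the two main results that have already been proved, so the plan is essentially a verification that the hypotheses line up. First, I would invoke Theorem \ref{mainresult}: since $T = \sum_{j=N_1}^{N_2} T_j$ is a BRS operator, for every $(\vec f,\vec g)\in S_{B_1}^n\times S_{B_2^*}^n$ there is a $\gamma$-sparse $\mathscr Q\subset\mathscr D$ with
\[
    |\langle T\vec f,\vec g\rangle|\lesssim_{d,p,q,\gamma,\kappa} \mathcal{C}\,n^{\frac{3}{2}+\frac{1}{p}+\frac{1}{q'}}\sum_{Q\in\mathscr Q}|Q|\,\llangle\vec f\rrangle_{\textit{\L}^p(Q,B_1)}\cdot\llangle\vec g\rrangle_{\textit{\L}^{q'}(Q,B_2^*)}.
\]
This is precisely the hypothesis of Theorem \ref{ComMatWghtNormIneq} with constant $C_1\lesssim_{d,p,q,\gamma,\kappa}\mathcal{C}\,n^{\frac{3}{2}+\frac{1}{p}+\frac{1}{q'}}$.

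Next, I would apply Theorem \ref{ComMatWghtNormIneq} on the dense subspace $S_{B_1}^n$. That theorem yields directly
\[
    |\langle W^{1/r}[B,T](W^{-1/r}\vec f),\vec g\rangle| \lesssim_{d,n,p,q,r,\gamma} C_1\,\max_{i,j}\|B_{i,j}\|_{BMO}\,\mathcal{W}(W)\,\|\vec f\|_{L^r_{B_1^n}}\|\vec g\|_{L^{r'}_{(B_2^*)^n}},
\]
where
\[
    \mathcal{W}(W) \coloneqq \left([W]_{A_t}^s[W]_{RH_{t,s}}^{s}+[W]_{A_t}^{\frac{1}{t-1}}\right)[W]_{A_t}^{\frac{t'}{r}+\frac{s}{r'}}[W]_{RH_{t,s}}^{\frac{1}{r}+\frac{s}{r'}}.
\]
Absorbing the factor $n^{\frac{3}{2}+\frac{1}{p}+\frac{1}{q'}}$ into the implicit constant (which is allowed to depend on $n$) and invoking the duality between $L^r_{B^n_2}(W)$ and $L^{r'}_{(B_2^*)^n}(W^{-r'/r})$ recorded in Section 6, this gives the claimed operator-norm bound on the dense subspace.

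Finally, I would extend from $S_{B_1}^n$ to all of $L^r_{B_1^n}(W)$; this is the standard density argument carried out in Section 8 of the paper, so nothing new is required.

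The only potential friction is bookkeeping: tracking that the same exponents $p,r,q,t,s$ and the same $\gamma$ appear in both Theorems \ref{mainresult} and \ref{ComMatWghtNormIneq}, and that the $n$-dependence arising from Theorem \ref{mainresult} is harmless in the present statement because the constant is permitted to depend on $n$. There is no genuine obstacle, as both ingredients are already in place.
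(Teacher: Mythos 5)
Your proposal is correct and follows exactly the route the paper takes: the paper states only that the corollary is obtained by combining Theorem \ref{mainresult} with Theorem \ref{ComMatWghtNormIneq}, and your chaining of the two (with the $n$-dependence absorbed into the implicit constant and the extension from the dense subspace via Proposition \ref{denseToGlobal}) is precisely that argument.
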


\section{From dense subspaces to global bounds}
In this section we show that in order to prove convex body domination bounds in the Bochner spaces $L^r_{B_1}$ and $L^{r'}_{B_2^*}$, it suffices to prove the same bounds in some dense subspaces. The proof is once again an adaptation of the argument from the scalar valued case, see \cite[Lemma A.1.]{beltran_multi-scale_2020}.
\begin{prop}\label{denseToGlobal}
    Suppose $1\leq p<r<q\leq\infty$, and let $V_1\subset L^r_{B_1}$ and $V_2\subset L^{r'}_{B_2^*}$ be dense subspaces. Then the $(\textit{\L}^p_{B_1},\textit{\L}^{q'}_{B_2^*})$ convex body domination bound extends from $V_1\times V_2$ to $L^r_{B_1}\times L^{r'}_{B_2^*}$.
\end{prop}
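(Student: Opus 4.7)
The plan is to pass from the dense subspaces to the full Bochner spaces by working with the sparse-form functional
\[
\Lambda(\vec u,\vec v)\coloneqq\sup_{\mathscr Q}\sum_{Q\in\mathscr Q}|Q|\,\llangle\vec u\rrangle_{\textit{\L}^p(Q,B_1)}\cdot\llangle\vec v\rrangle_{\textit{\L}^{q'}(Q,B_2^*)},
\]
where the supremum runs over all $\gamma$-sparse subfamilies $\mathscr Q\subset\mathscr D$. The hypothesis reads $|\langle T\vec f,\vec g\rangle|\leq C\Lambda(\vec f,\vec g)$ for $(\vec f,\vec g)\in V_1^n\times V_2^n$, and once I establish the same inequality on $L^r_{B_1^n}\times L^{r'}_{(B_2^*)^n}$ the proposition follows: for any pair with $\Lambda(\vec f,\vec g)>0$ I pick a $\gamma$-sparse $\mathscr Q$ whose sum is at least $\tfrac{1}{2}\Lambda(\vec f,\vec g)$, giving convex body domination with the (slightly worse) constant $2C$; the case $\Lambda=0$ is trivial.

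The first ingredient is the \emph{sparse form bound}
\[
\Lambda(\vec u,\vec v)\leq C_0\|\vec u\|_{L^r_{B_1^n}}\|\vec v\|_{L^{r'}_{(B_2^*)^n}}.
\]
To prove this, I would specialise Lemma~\ref{convbodytointegralstuff} to $W\equiv I$, which collapses the pointwise term $\llangle\vec u\rrangle\cdot\llangle\vec v\rrangle$ into the product of averages $\|\vec u\|_{\textit{\L}^p(Q,B_1^n)}\|\vec v\|_{\textit{\L}^{q'}(Q,B_2^{*n})}$. Then the standard sparse-to-maximal device (replacing $|Q|$ by $\gamma^{-1}|E_Q|$, exploiting disjointness of the $E_Q$ and applying H\"older) together with the $L^r$ and $L^{r'}$ boundedness of $\mathcal{M}_p$ and $\mathcal{M}_{q'}$, available since $p<r$ and $q'<r'$, will complete the bound. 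Coupled with the hypothesis on $V_1^n\times V_2^n$ and the density of $V_2^n$, this extends $T$ uniquely to a bounded operator $L^r_{B_1^n}\to L^r_{B_2^n}$, making $(\vec f,\vec g)\mapsto\langle T\vec f,\vec g\rangle$ continuous on $L^r_{B_1^n}\times L^{r'}_{(B_2^*)^n}$.

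The second ingredient is \emph{joint continuity} of $\Lambda$ on $L^r_{B_1^n}\times L^{r'}_{(B_2^*)^n}$. From the Minkowski sum inclusion $\llangle\vec u+\vec w\rrangle_X\subset\llangle\vec u\rrangle_X+\llangle\vec w\rrangle_X$ and the distributivity $(A+B)\cdot C\leq A\cdot C+B\cdot C$ of the Minkowski dot product, one reads off slot-by-slot subadditivity of $\Lambda$; combined with the sparse form bound this gives
\[
|\Lambda(\vec f_k,\vec g_k)-\Lambda(\vec f,\vec g)|\leq C_0\bigl(\|\vec f_k-\vec f\|_{L^r_{B_1^n}}\,\|\vec g_k\|_{L^{r'}_{(B_2^*)^n}}+\|\vec f\|_{L^r_{B_1^n}}\,\|\vec g_k-\vec g\|_{L^{r'}_{(B_2^*)^n}}\bigr),
\]
which vanishes in the limit for any convergent approximating sequences.

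With both ingredients in hand I would finish by approximation: pick $\vec f_k\in V_1^n$ with $\vec f_k\to\vec f$ in $L^r_{B_1^n}$ and $\vec g_k\in V_2^n$ with $\vec g_k\to\vec g$ in $L^{r'}_{(B_2^*)^n}$, apply the hypothesis $|\langle T\vec f_k,\vec g_k\rangle|\leq C\Lambda(\vec f_k,\vec g_k)$ to each pair, and pass to the limit. The main obstacle is the sparse form bound above; it is essentially the $W\equiv I$ instance of the scheme in the proof of Theorem~\ref{pqWeightnormineq}, but I would need to verify it directly so as to avoid any circular dependence on convex body domination of $T$ itself.
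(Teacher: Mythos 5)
Your proposal is correct and essentially coincides with the paper's argument. Both rely on the same two ingredients: (i) the unweighted sparse-form bound, obtained as the $W\equiv I$ specialisation of the estimates in Theorem~\ref{pqWeightnormineq} (after reducing the convex-body product to a product of averages via Lemma~\ref{convbodytointegralstuff}), and (ii) subadditivity of the sparse form in each slot to control the error introduced by approximating $(\vec f,\vec g)$ by pairs in $V_1^n\times V_2^n$. The paper phrases (ii) through an explicit decomposition of $\llangle\vec f_*\rrangle\cdot\llangle\vec g_*\rrangle$ into three terms (the ``$=$'' there should really be a ``$\leq$''; only that direction is used), and keeps a sparse family $\mathscr Q=\mathscr Q(\vec f_*,\vec g_*)$ fixed per approximation level, whereas you work directly with the supremum functional $\Lambda$ and extract a near-optimal sparse family at the end at the cost of a harmless factor $2$. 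These are cosmetic differences: your packaging is cleaner in that the continuity statement is for a single fixed functional, while the paper's is marginally more explicit about where the sparse family comes from, but the substance of the approximation step, the need for $p<r<q$ to make the maximal operators $\mathcal M_p,\mathcal M_{q'}$ bounded on $L^r,L^{r'}$, and the appeal to density are identical.
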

\begin{proof}
    Let $\vec f\in L^r_{B_1^n}$ and $\vec g\in L^{r'}_{(B_2^*)^n}$. To simplify notation we will denote the Bochner $L^p$ norm simply with $\|\cdot\|_p$. We will also denote
    \[
        \Lambda_{p,q}^{\mathscr Q}(\vec f,\vec g)\coloneqq \sum_{Q\in \mathscr Q} |Q|\llangle \vec f\rrangle_{\textit{\L}^p(Q,B_1)}\cdot\llangle \vec g \rrangle_{\textit{\L}^{q'}(Q,B_2^*)},
    \]
    where $\mathscr Q$ is some $\gamma$-sparse collection.
    From the weighted bounds with $W=Id_n$ we see that 
    \begin{align*}
        \Lambda_{p,q}^{\mathscr Q}(\vec f,\vec g)\lesssim_{p,q,r,\gamma}\|\vec f\|_r\|\vec g\|_{r'}.\numberthis\label{TboundedbyCBD}
    \end{align*}
    Thus the assumed convex body domination implies that the bilinear form associated to $T$ is bounded on $V_1^n\times V_2^n$ and hence there exists a unique extension that is bounded on $L^r_{B_1^n}\times L^{r'}_{(B^*_2)^n}$.
    For any $\varepsilon>0$, we choose $\vec f_*\in V_1^n$ and $\vec g_*\in V_2^n$ such that  $\|\vec f-\vec f_*\|_{r}\leq \varepsilon$ and $\|\vec g-\vec g_*\|_{r'}\leq \varepsilon$. The choice can be made so that we also have $\|\vec f_*\|_{r}\leq 2\|\vec f\|_{r}$ and $\|\vec g_*\|_{r'}\leq 2\|\vec g\|_{r'}$. 
    We need to show that 
    \[
        |\langle T \vec f_*, \vec g_*\rangle|
        \lesssim \Lambda_{p,q}^{\mathscr Q}(\vec f_*,\vec g_*)\quad\Rightarrow\quad|\langle T \vec f, \vec g\,\rangle|
        \lesssim \Lambda_{p,q}^{\mathscr Q}(\vec f,\vec g).
    \]
    We start by estimating
    \begin{align*}
        |\langle T \vec f, \vec g\,\rangle| &\leq |\langle T [\vec f-\vec f_*], \vec g\,\rangle|+|\langle T \vec f_*, \vec g-\vec g_*\rangle|+|\langle T \vec f_*, \vec g_*\rangle|\\&\leq
        \|T\|_{r\to r}\left(\|\vec f-\vec f_*\|_r\|\vec g\|_{r'}+\|\vec f_*\|_r\|\vec g-\vec g_*\|_{r'}\right) + |\langle T \vec f_*, \vec g_*\rangle|
        \\&\leq \varepsilon\,\|T\|_{r\to r}\left(\|\vec g\|_{r'}+2\|\vec f\|_r\right) + |\langle T \vec f_*, \vec g_*\rangle|.
    \end{align*}
    By assumption we have
    \[
        |\langle T \vec f_*, \vec g_*\rangle|\lesssim \Lambda_{p,q}^{\mathscr Q}(\vec f_*,\vec g_*).
    \]
    Then we have the identity
    \begin{align*}
        \llangle \vec f_*\rrangle_{\textit{\L}^p(Q,B_1)}\cdot\llangle \vec g_* \rrangle_{\textit{\L}^{q'}(Q,B_2^*)}=&\,\,\llangle \vec f_*-\vec f\rrangle_{\textit{\L}^p(Q,B_1)}\cdot\llangle \vec g_* \rrangle_{\textit{\L}^{q'}(Q,B_2^*)}\\&+\llangle \vec f\rrangle_{\textit{\L}^p(Q,B_1)}\cdot\llangle \vec g_* -\vec g\rrangle_{\textit{\L}^{q'}(Q,B_2^*)}\\&+\llangle \vec f\rrangle_{\textit{\L}^p(Q,B_1)}\cdot\llangle \vec g \rrangle_{\textit{\L}^{q'}(Q,B_2^*)}
    \end{align*}
    and this implies that
    \[
        \Lambda_{p,q}^{\mathscr Q}(\vec f_*,\vec g_*) = \Lambda_{p,q}^{\mathscr Q}(\vec f_*-\vec f,\vec g_*) + \Lambda_{p,q}^{\mathscr Q}(\vec f,\vec g_*-\vec g)+ \Lambda_{p,q}^{\mathscr Q}(\vec f,\vec g).
    \]
     By \eqref{TboundedbyCBD} we have
    \begin{align*}
        \Lambda_{p,q}^{\mathscr Q}(\vec f_*-\vec f,\vec g_*)+ \Lambda_{p,q}^{\mathscr Q}(\vec f,\vec g_*-\vec g)&\lesssim_{p,q,r,\gamma} \|\vec f_*-\vec f\|_r\|\vec g_*\|_{r'}+\|\vec f\|_r\|\|\vec g_*-\vec g\|_{r'}\\&\leq \varepsilon\left(2\|\vec g\|_{r'}+\|\vec f\|_r\right).
    \end{align*}
    We have now shown that
    \begin{align*}
        |\langle T \vec f, \vec g\,\rangle|&\lesssim \varepsilon\,\|T\|_{r\to r}\left(\|\vec g\|_{r'}+\|\vec f\|_r\right) + \Lambda_{p,q}^{\mathscr Q}(\vec f_*,\vec g_*)\\
        &\lesssim_{p,q,r,\gamma} \varepsilon\,\big(\|T\|_{r\to r}+1\big)\big(\|\vec g\|_{r'}+\|\vec f\|_r\big) + \Lambda_{p,q}^{\mathscr Q}(\vec f,\vec g).
    \end{align*}
    Letting $\varepsilon\to0$ in the above inequality gives us
    \[  
        |\langle T \vec f, \vec g\,\rangle|\lesssim_{p,q,r,\gamma} \Lambda_{p,q}^{\mathscr Q}(\vec f,\vec g)
    \]
    and we are done.
\end{proof}
The above result lets us extend the results that we have proven in sections 4 and 5  for $V_1=S_{B_1}$, $V_2=S_{B_2^*}$ to $L^r_{B_1}\times L^r_{B_2^*}$.

\section{Fourier multipliers}
In this section we will note  that certain Fourier multiplier operators satisfy the conditions \ref{supportCond}--\ref{restStrongLqbound} and hence the convex body estimate of Theorem \ref{mainresult} is true for these operators. The proof is almost identical to the one in \cite{beltran_multi-scale_2020} Section 6.3 so we only give an outline of the proof. For more details see the proof in \cite{beltran_multi-scale_2020}.

Let $\mathcal H_1$ and $\mathcal H_2$ be two separable Hilbert spaces and denote the space of $(\mathcal H_1,\mathcal H_2)$-bounded linear operators by $\mathcal{L}(\mathcal H_1,\mathcal H_2)$. We consider the multiplier operator $T=T_m$ that maps $\mathcal H_1$-valued functions to $\mathcal H_2$-valued functions and is defined on the frequency side by
\[
    \widehat {Tf}(\xi)=m(\xi)\widehat f(\xi),
\]
where $m$ is an operator-valued function such that $m(\xi)\in \mathcal{L}(\mathcal H_1,\mathcal H_2)$ for almost every $\xi$. For $1\leq p\leq \infty$ we define the operator-valued $L^p$ space 
\[
    L^p_{\mathcal{L}(\mathcal H_1,\mathcal H_2)}\coloneqq\{m\,\colon\,mx\in L^p_{\mathcal{H}_2}, \forall x\in \mathcal{H}_1\}
\]
that has the norm 
\[
    \|m\|_{ L^p_{\mathcal{L}(\mathcal H_1,\mathcal H_2)}}\coloneqq \sup_{|x|_{\mathcal{H}_1}\leq1}\|mx\|_{L^p_{\mathcal H_2}}.
\]
Also for $1\leq p\leq q\leq \infty$ we define $ M^{p,q}_{\mathcal H_1,\mathcal H_2}$ to be the space of multipliers $m$ that satisfy 
\[
    \|Tf\|_{L^q_{\mathcal H_2}}\leq C \|f\|_{L^p_{\mathcal H_1}}.
\]
Furthermore, the optimal constant defines a norm in $M^{p,q}_{\mathcal H_1,\mathcal H_2}$. 

To see the multi-scale structure of $T$, we consider functions of the form 
\[\Psi_\ell(x)\coloneqq\Psi_0(2^{-\ell}x)-\Psi_0(2^{-\ell+1}x),\] where $\ell>0$ and $\Psi_0\in C^\infty_c$  with $\operatorname{supp}\Psi_0\subset \{x\in\R^d\colon |x|<\frac{1}{2}\}$ and $\Psi_0(x)=1$ for $|x|<\frac{1}{4}$. Let $\theta\in C^\infty_c$ such that $\operatorname{supp}\theta\subset\{x\in\R^d\colon |x|<\frac{1}{2}\}$ and $\int_{\R^d}\pi(x)\theta(x)\intD x=0$ for all polynomials $\pi$ of degree at most $10d$. We also take a radial $C^\infty$ function $\phi$ that is supported in $\{\xi\in \widehat\R^d\,\colon\, \frac{1}{2}<|\xi|<2\}$ and 
\[\sum_{k\in\Z}\phi(2^{-k}\xi)\widehat \theta(2^{-k}\xi)=1.\]

For now we will assume that $m$ is compactly supported in $\widehat \R^d\setminus\{0\}$. Then with $n_1,n_2\in \Z$ we can write 
\[
    m(\xi)= \sum_{k=n_1}^{n_2} \phi(2^{-k}\xi)\widehat \theta(2^{-k}\xi)m(\xi)
\]
and the multiplier operator can be decomposed into a multi-scale sum
\[
   Tf=\sum_{\ell\geq 0}\sum_{j=\ell+1-n_2}^{\ell+1-n_1}T_j^\ell f, 
\]
where $T^\ell_j$ is a convolution operator with kernel $K_{\ell+1-j}^\ell$ defined by
\[
    K_k^\ell(x)\coloneqq \int_{\R^d}\mathcal{F}^{-1}[\phi(2^{-k}\cdot)m](x-y)\Psi_\ell(2^k(x-y))2^{jd}\theta(2^ky)\intD y, \quad k\in\Z.
\]
We denote
\[
    \mathfrak a_{\circ,\ell}[m] \coloneqq \sup_{t>0}\|[\phi m(t\cdot)]*\widehat\Psi_\ell\|_{L^\infty_{\mathcal{L}(\mathcal{H}_1,\mathcal{H}_2)}},\quad B_\circ[m]\coloneqq\sum_{\ell\geq0} \mathfrak a_{\circ,\ell}[m].
\]
and 
\[
    \mathfrak a_{\ell}[m] \coloneqq \sup_{t>0}\|[\phi m(t\cdot)]*\widehat\Psi_\ell\|_{M^{p,q}_{\mathcal{H}_1,\mathcal{H}_2}}2^{\ell d\big(\frac{1}{p}-\frac{1}{q} \big)},\quad \mathcal B[m]\coloneqq\sum_{\ell\geq0}\mathfrak a_\ell[m](1+\ell).
\]
It was shown in \cite{beltran_multi-scale_2020} Section 6.3  that for $\ell\geq0$ the operator \[ T^\ell\coloneqq\sum_{j=\ell+1-n_2}^{\ell+1-n_1}T_j^\ell\] satisfies conditions \ref{supportCond}--\ref{restStrongLqbound} with \[ A(p)=A(q)\eqsim_{d,p,q} \mathfrak a_{\circ,\ell}[m]+\mathfrak a_\ell[m](1+\ell),\quad
A_\circ(p,q)\eqsim_{d,p,q} \mathfrak a_\ell[m]\] and 
\[ B\eqsim_{d,p,q} 2^{\kappa\ell}\mathfrak a_\ell[m].\]
Thus applying Theorem \ref{mainresult} and summing over $\ell\geq0$ we get that if $m$ is compactly supported, $1<p\leq q<\infty$ and
\[
    \mathcal{C}\coloneqq \mathcal B_\circ[m]+\mathcal B[m]<\infty,
\]
then 
\[
    \langle T \vec f,\vec g\,\rangle \lesssim_{d,n,p,q,\gamma} \mathcal{C}\sum_{Q\in\mathscr{Q}} |Q| \,\llangle\Vec{f}\rrangle_{\textit{\L}^p(Q,B_1)}\cdot \llangle\Vec{g}\rrangle_{\textit{\L}^{q'}(Q,B_2^*)}.\numberthis\label{FMconvboddom}
\]

In \cite[p. 63-64] {beltran_multi-scale_2020} it was shown how to remove the assumption that $m$ is compactly supported in the scalar case and the same argument with minor obvious changes works also in the vector case. Thus if $1<p\leq q<\infty$ and \[B_\circ[m]+\mathcal B[m]<\infty,\numberthis\label{multiplierCond}\] then we have the $(\textit{\L}^p_{B_1},\textit{\L}^{q'}_{B_2^*})$ convex body domination bound \eqref{FMconvboddom} with the general multiplier $m$. 

We refer to \cite[Chapters 6 and 7]{beltran_multi-scale_2020} for many concrete examples of Fourier multiplier operators that satisfy \eqref{multiplierCond}. Their list includes Miyachi classes with subdyadic Hörmander conditions, prototypical versions of singular Radon transforms, multi-scale variants of oscillatory multipliers and certain radial multipliers. By \eqref{FMconvboddom}, Corollary \ref{TweightCor} and Corollary \ref{ComTweightCor} all these operators and their commutators with BMO functions satisfy matrix-weighted norm inequalities.

\appendix
\section{Proof of the inequality (\ref{TheClaim})}
We will prove the claim that we made at the end of the proof of Lemma \ref{inductiveLemma}. We stress the fact that we follow the arguments from \cite{beltran_multi-scale_2020} and there is very little novelty in the proof of the claim. We will need the following dyadic version of the regularity conditions \ref{singleScalereqcond} and \ref{adjSingleScalereqcond} from Definition \ref{BRSop}.
\begin{lem}\label{BRScor} Let $T_j$ be a single scale operator that satisfies \ref{singleScalereqcond} and \ref{adjSingleScalereqcond} from Definition \ref{BRSop}.
Let $1< p\leq q<\infty$, $0<\vartheta<\frac{1}{p}$ and $\mathbb{E}_kf\coloneqq\sum_{Q}\mathbbm{1}_Q\fint_Qf$, where the sum is taken over dyadic cubes $Q$ with side length $2^{-k}$. Then
\[\|T_j(I-\mathbb{E}_{k-j})\|_{L^p_{B_1}\to L^q_{B_2}}\lesssim_\vartheta B2^{-k\vartheta}2^{-jd(\frac{1}{p}-\frac{1}{q})}.\]
and 
\[\|T_j^*(I-\mathbb{E}_{k-j})\|_{L^{q'}_{B_2^*}\to L^{p'}_{B_1^*}}\lesssim_\vartheta B2^{-k\vartheta}2^{-jd(\frac{1}{p}-\frac{1}{q})}.\]
\end{lem}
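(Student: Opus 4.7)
The plan is to convert the dyadic oscillation $I-\mathbb{E}_{k-j}$ into a continuous translation-difference modulus that the hypotheses \ref{singleScalereqcond} and \ref{adjSingleScalereqcond} directly control. First, using the intertwining identity $\Delta_h\circ\operatorname{Dil}_{2^j} = \operatorname{Dil}_{2^j}\circ\Delta_{2^j h}$ together with the scaling of Bochner $L^p$-norms under $\operatorname{Dil}_{2^j}$, I would rescale the regularity hypothesis to
\[
    \|T_j\Delta_h\|_{L^p_{B_1}\to L^q_{B_2}}\lesssim B\,(2^{-j}|h|)^\kappa\,2^{-jd(\frac{1}{p}-\frac{1}{q})},\qquad|h|\le 2^j,
\]
and analogously for $T_j^*$ from \ref{adjSingleScalereqcond}. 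Since $\mathbb{E}_{k-j}$ is self-adjoint in the Bochner $L^p$--$L^{p'}$ pairing, the duality
\[
    \|T_j(I-\mathbb{E}_{k-j})\|_{L^p_{B_1}\to L^q_{B_2}} = \|(I-\mathbb{E}_{k-j})T_j^*\|_{L^{q'}_{B_2^*}\to L^{p'}_{B_1^*}}
\]
reduces matters to estimating the right-hand side, where the oscillation acts last.

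For the right-hand side I would exploit the pointwise identity
\[
    (I-\mathbb{E}_{k-j})h(y) = -\fint_{Q(y)-y}\Delta_\eta h(y)\intD\eta,
\]
where $Q(y)$ is the dyadic cube of side $L\coloneqq 2^{j-k}$ containing $y$. Taking $B_1^*$-norm, applying Jensen with the power $p'$, integrating in $y$ and swapping the order of integration by Fubini---using that $Q(y)-y\subset[-L,L]^d$ and that for fixed $\eta$ the set of $y$ lying in the same dyadic cube as $y+\eta$ has at most unit density on $\R^d$---gives
\[
    \|(I-\mathbb{E}_{k-j})h\|_{L^{p'}_{B_1^*}}^{p'}\le 2^d\fint_{[-L,L]^d}\|\Delta_\eta h\|_{L^{p'}_{B_1^*}}^{p'}\intD\eta.
\]
Applied to $h=T_j^*g$ together with the pairing identity $\langle\Delta_\eta T_j^*g,f\rangle=-\langle g,T_j\Delta_{-\eta}f\rangle$, which yields $\|\Delta_\eta T_j^*g\|_{L^{p'}_{B_1^*}}\le\|T_j\Delta_{-\eta}\|_{L^p_{B_1}\to L^q_{B_2}}\|g\|_{L^{q'}_{B_2^*}}$, the rescaled regularity is applicable since $|\eta|\le L\sqrt d\le 2^j$ once $k$ is past a dimensional threshold, and it produces the pointwise-in-$\eta$ factor $(2^{-j}|\eta|)^\kappa\lesssim 2^{-k\kappa}$. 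Integrating over $\eta$ gives the desired decay, and the finitely many small-$k$ cases are absorbed by the trivial bound $\|I-\mathbb{E}_{k-j}\|\le 2$ together with \ref{singleScaleLpqbound}. The adjoint bound for $T_j^*(I-\mathbb{E}_{k-j})$ follows by the symmetric argument with $T_j$ and $T_j^*$ interchanged and \ref{singleScalereqcond} replaced by \ref{adjSingleScalereqcond}.

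The main subtlety I anticipate is the exponent restriction $\vartheta<\frac{1}{p}$ in the statement: the direct argument above appears to produce the stronger decay $2^{-k\kappa}$ without such a restriction. I suspect the intended proof passes through a martingale-difference decomposition $I-\mathbb{E}_{k-j}=\sum_{\ell\geq k-j}(\mathbb{E}_{\ell+1}-\mathbb{E}_\ell)$ and sums a Littlewood-Paley series in which the constraint $\vartheta<\tfrac{1}{p}$ emerges naturally from balancing $\ell^{p'}$-summability of scale-$2^{-\ell}$ oscillations against the volume of the dilation box $[-L,L]^d$; reconciling exponents with this style of bookkeeping would be the remaining technical step.
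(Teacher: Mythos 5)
The paper does not prove this lemma itself; it cites \cite[Corollary 3.5]{beltran_multi-scale_2020}. So the comparison is really against the original BRS argument. Your mechanism---dualize so that the oscillation $I-\mathbb{E}_{k-j}$ acts last, represent $(I-\mathbb{E}_{k-j})h(y)$ as a $y$-dependent average of differences $\Delta_\eta h(y)$, apply Jensen with exponent $p'$, Fubini in $(y,\eta)$, and then the rescaled version of \ref{singleScalereqcond}---is the right one, and the intermediate bound $\|(I-\mathbb{E}_{k-j})h\|_{L^{p'}_{B_1^*}}^{p'}\le 2^d\fint_{[-L,L]^d}\|\Delta_\eta h\|_{L^{p'}_{B_1^*}}^{p'}\,d\eta$ is correctly derived (up to a harmless sign: $\langle\Delta_\eta T_j^*g,f\rangle=+\langle g,T_j\Delta_{-\eta}f\rangle$, not $-$). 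This gives the decay $2^{-k\kappa}$, uniformly for $k$ above a dimensional threshold. Two issues remain, one of which you flag yourself.

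First, the exponent issue that you raise is genuine and you should not wave it away by conjecture. Your argument yields exactly $2^{-k\kappa}$ and no more; the stated conclusion is $\lesssim_\vartheta B 2^{-k\vartheta}$ for \emph{any} $\vartheta<\tfrac{1}{p}$, with no constraint relating $\vartheta$ to $\kappa$. If $\kappa\ge\tfrac{1}{p}$, your bound is stronger and the lemma follows for $k\ge0$. If $\kappa<\tfrac{1}{p}$, the statement claims a decay rate $\vartheta\in(\kappa,\tfrac1p)$ that your argument does not deliver, and the martingale-difference/Littlewood--Paley sketch you append does not obviously close it either: the naive decomposition $I-\mathbb{E}_{k-j}=\sum_{\ell\ge k}D_{\ell-j}$ combined with $\|T_jD_{\ell-j}\|\lesssim B2^{-\ell\kappa}2^{-jd(\frac1p-\frac1q)}$ still only yields $2^{-k\kappa}$ after summing. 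It is worth observing that in the paper's only application (the bounds \eqref{condExpBound}--\eqref{adjCondExpBound} in Proposition~\ref{appendixProp}), one takes $\vartheta=\tilde\kappa<\min\{\kappa,\tfrac1p,\tfrac1{q'}\}$, so the range $\vartheta\in(\kappa,\tfrac1p)$ is never used; your argument therefore suffices for the paper's purposes, even though the lemma as quoted is not fully established by it. You should state explicitly that you only obtain $\vartheta\le\kappa$, rather than speculate.

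Second, smaller points. (i) For the ``finitely many small-$k$ cases'' you invoke the trivial bound $\|I-\mathbb{E}_{k-j}\|\le2$ together with \ref{singleScaleLpqbound}, but \ref{singleScaleLpqbound} is not among the lemma's hypotheses, which assume only \ref{singleScalereqcond} and \ref{adjSingleScalereqcond}; you would need to handle the dimensional threshold differently, e.g.\ by splitting the $\eta$-integral at $|\eta|=2^j$ and using only the regularity hypothesis at $|h|=1$, or by accepting a dimensional loss in the implicit constant. (ii) The identity $\|T_j(I-\mathbb{E}_{k-j})\|_{L^p_{B_1}\to L^q_{B_2}}=\|(I-\mathbb{E}_{k-j})T_j^*\|_{L^{q'}_{B_2^*}\to L^{p'}_{B_1^*}}$ is not automatic, since $L^{q'}_{B_2^*}$ and $L^{p'}_{B_1^*}$ need not be the full duals; it does hold because these are norming subspaces, but this should be said. (iii) For the second conclusion you write ``the symmetric argument''; dualizing $T_j^*(I-\mathbb{E}_{k-j})$ naively lands in biduals, so the clean route is the norming-subspace computation $\|T_j^*(I-\mathbb{E}_{k-j})g\|_{L^{p'}_{B_1^*}}=\sup_{\|f\|_{L^p_{B_1}}\le1}\langle g,(I-\mathbb{E}_{k-j})T_jf\rangle$ and then the same pointwise estimate applied to $(I-\mathbb{E}_{k-j})T_jf$ with \ref{adjSingleScalereqcond} in place of \ref{singleScalereqcond}; this is worth spelling out.
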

\noindent  See \cite[Corollary 3.5]{beltran_multi-scale_2020} for the proof of the above lemma.

We recall the setting of the claim. We have $f_i\coloneqq R_f\vec f\cdot \vec e_i$ and $g_i\coloneqq R_f^{-\top}\vec g\cdot \vec e_i$. Then  $\Omega\coloneqq\bigcup_{i=1}^n\Omega_i^{(f)}\cup \Omega_i^{(g)},$ where
\[
    \Omega_i^{(f)}\coloneqq\left\{x\in3Q_0\,\colon\,\mathcal{M}_pf_i(x)>\Big(\frac{100^dn}{1-\gamma}\Big)^\frac{1}{p}\| f_i\|_{\textit{\L}^p(Q_0,B_1)}\right\}
\] 
and
\[
    \Omega_i^{(g)}\coloneqq\left\{x\in3Q_0\,\colon\,\mathcal{M}_{q'}g_i(x)>\Big(\frac{100^dn}{1-\gamma}\Big)^\frac{1}{q'}\| g_i\|_{\textit{\L}^{q'}(3Q_0,B_2^*)}\right\}.
\]
We decompose $\Omega$ via a Whitney collection $\mathcal W$ that consists of disjoint cubes $P$ with side length $2^{L(P)}\in2^\Z$, such that 
\[
    5\operatorname{diam}(P)\leq \operatorname{dist}(P,\Omega^\complement)\leq 12\operatorname{diam}(P),\quad\text{for all } P\in\mathcal{W}.
\]  
Recall also that  
\[
    S_Qf\coloneqq\sum_{N_1\leq j\leq L(Q)}T_j[f\mathbbm{1}_Q]
\] 
for a dyadic cube $Q$ with side length $2^{L(Q)}\in2^\Z$.
\begin{prop}\label{appendixProp}
    Fix a dyadic cube $Q_0$ with side length $2^{N_2}$ and let $1< p\leq q<\infty$. Let  $f_i$, $g_i$, $\Omega$, $\mathcal{W}$ and $S_Q$ be as above.  Then for every $i=1,\dots,n$
     we have
    \begin{align*}|\langle Tf_i,g_i\rangle- \sum_{P\in\mathcal{W}}&\langle S_Pf_i,g_i\rangle|\\&\lesssim_{d,p,q,\gamma,\kappa} \mathcal{C}n^{\frac{1}{p}+\frac{1}{q'}}\,|3Q_0|\,\| f_i\|_{\textit{\L}^p(3Q_0,B_1)}\| g_i\|_{\textit{\L}^{q'}(3Q_0,B_2^*)}.\end{align*}
\end{prop}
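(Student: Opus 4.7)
The plan is to follow the scalar argument of Section 4 in \cite{beltran_multi-scale_2020}, tracking the factor $n$ introduced by the composite threshold in $\Omega=\bigcup_{i=1}^n(\Omega_i^{(f)}\cup\Omega_i^{(g)})$. Starting from the decomposition already set up in the proof of Lemma \ref{inductiveLemma},
\[
\langle Tf_i,g_i\rangle-\sum_{P\in\mathcal W}\langle S_Pf_i,g_i\rangle=\langle Th_i,g_i\rangle+II+III,
\]
with $II=-\sum_P\langle S_P[\mathbbm{1}_P\fint_P f_i],g_i\rangle$ and $III=\sum_P\langle(T-S_P)b_{i,P},g_i\rangle$, I would estimate the three terms separately. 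The factor $\tfrac{100^dn}{1-\gamma}$ in the definition of $\Omega$ is exactly what enforces the pointwise bounds $\|h_i\|_\infty\lesssim n^{1/p}\|f_i\|_{\textit{\L}^p(Q_0,B_1)}$ and the analogous $n^{1/q'}$ averages for $g_i$ on $\Omega^\complement$; these are the sole source of the $n^{1/p+1/q'}$ factor in the conclusion.

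For the good term $\langle Th_i,g_i\rangle$, the pointwise control of $h_i$ is immediate on $\Omega^\complement$ from the maximal-function threshold, and on each Whitney cube $P\subset\Omega$ it follows from $\operatorname{dist}(P,\Omega^\complement)\lesssim\operatorname{diam}(P)$, which locates a point in $\Omega^\complement$ in a fixed enlargement of $P$ where $\mathcal M_p f_i$ is controlled. Combined with similar $L^{q',1}$-type control of $g_i$ (obtained by layer-cake/Kolmogorov splitting at the level set $\Omega_i^{(g)}$), the two endpoint bounds \ref{weakLpbound} and \ref{restStrongLqbound} can be spliced via Hölder in Lorentz spaces to give the desired bound with constant $A(p)+A(q)$.

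For term $II$, each $\mathbbm{1}_P\fint_P f_i$ is a constant-valued function on $P$ of norm at most $n^{1/p}\|f_i\|_{\textit{\L}^p(Q_0,B_1)}$, so the rescaled single-scale bound \eqref{rescaledSingleScaleBound} applies directly at each $j\leq L(P)$. Pairing with $g_i$, the $j$-sum is dominated for $j$ near $L(P)$ by $A_\circ(p,q)$ and at remote scales by $B$ times a geometric factor coming from \ref{singleScalereqcond} via Lemma \ref{BRScor}. Truncating the two regimes at the balance point $j-L(P)\sim\log(B/A_\circ(p,q))$ produces exactly the $A_\circ(p,q)\log(2+B/A_\circ(p,q))$ contribution in $\mathcal C$; Whitney disjointness and $\sum_P|P|\leq|Q_0|$ then finish $II$.

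The main obstacle is term $III$, where the regularity conditions \ref{singleScalereqcond} and \ref{adjSingleScalereqcond} do their essential work. Since $b_{i,P}$ has mean zero and is supported in $P$, and $(T-S_P)b_{i,P}=\sum_{L(P)<j\leq N_2}T_j b_{i,P}$ involves only scales strictly coarser than $P$, Lemma \ref{BRScor} (the dyadic form of \ref{singleScalereqcond}) gives, for any $\vartheta<\kappa$,
\[
\|T_j b_{i,P}\|_{L^q_{B_2}}\lesssim_\vartheta B\,2^{-(j-L(P))\vartheta}\,2^{-jd(1/p-1/q)}\|f_i\mathbbm{1}_P\|_{L^p_{B_1}},
\]
with a dual analogue on the $g_i$-side via \ref{adjSingleScalereqcond}. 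Summing the geometric series in $j$ and then summing over the disjoint Whitney cubes $P$ (using $\sum_P|P|\leq|\Omega|<|3Q_0|$ together with maximal-function control of averages of $g_i$ on neighbourhoods of $P$ by $n^{1/q'}\|g_i\|_{\textit{\L}^{q'}(3Q_0,B_2^*)}$) produces a bound with prefactor $B$, which is once again absorbed into $A_\circ(p,q)\log(2+B/A_\circ(p,q))$ by truncating at the same balance scale as in $II$. Combining the three bounds yields the claim.
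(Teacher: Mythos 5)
Your high-level decomposition into $\langle Th_i,g_i\rangle + II + III$ agrees with the paper, as does the observation that the only source of the $n^{1/p+1/q'}$ factor is the $\tfrac{100^dn}{1-\gamma}$ threshold in the definition of $\Omega$. However, your treatments of $II$ and $III$ contain genuine gaps.

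For $II$: the function $\mathbbm{1}_P\fint_P f_i$ is a constant on the dyadic cube $P$, so $(I-\mathbb E_{-L(P)})[\mathbbm{1}_P\fint_P f_i]=0$, and Lemma~\ref{BRScor} produces no decay whatsoever. Thus the truncation at $j-L(P)\sim\log(B/A_\circ)$ is not available, and summing the raw single-scale bound $\|T_j[\mathbbm1_P c]\|_{L^q}\lesssim A_\circ 2^{-jd(1/p-1/q)}\|\mathbbm1_P c\|_{L^p}$ over $j\leq L(P)$ diverges when $p=q$. The paper instead estimates $II$ in one step via the restricted strong type $\|S_P\|_{L^{q,1}_{B_1}\to L^{q}_{B_2}}\leq A(q)$ (inherited from \ref{restStrongLqbound}), and the resulting constant is $A(q)$, with no $\log$ factor and no use of the regularity conditions.

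For $III$ the gap is more serious. Pairing $T_jb_{i,P}$ directly with $g_i$ and summing over $j>L(P)$ does not converge. Concretely, with the Whitney control $\|f_i\mathbbm1_P\|_{L^p}\lesssim n^{1/p}|P|^{1/p}\|f_i\|_{\textit{\L}^p(Q_0)}$ and $\|g_i\mathbbm1_{R_j}\|_{L^{q'}}\lesssim 2^{jd/q'}n^{1/q'}\|g_i\|_{\textit{\L}^{q'}(3Q_0)}$ for $R_j\supset\operatorname{supp}(T_jb_{i,P})$, your estimate $\|T_jb_{i,P}\|_{L^q}\lesssim B2^{-(j-L(P))\vartheta}2^{-jd(1/p-1/q)}\|f_i\mathbbm1_P\|_{L^p}$ yields a $j$-sum of the form $\sum_{j>L(P)}2^{(j-L(P))(d/p'-\vartheta)}$. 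Since Lemma~\ref{BRScor} forces $\vartheta<\min\{1/p,1/q',\kappa\}$, while $d/p'\geq 1/p$ already when $p\geq 1+1/d$, this geometric series diverges in general. The paper avoids this by performing a second Calder\'on--Zygmund decomposition $g_i=h_i^*+b_i^*$ and splitting $III=III_1+III_2+III_3+III_4$: the terms against $h_i^*$ are handled with the weak type $(p,p)$ bound \ref{weakLpbound} and the $L^\infty$-control of $h_i^*$, the interaction $III_3$ forces $|L(P)-L(P')|\leq 2$ so the single-scale bound suffices, and only in $III_4$ (both $L(P),L(P')<j$) do the regularity conditions enter, with the regularity of $T_j$ used when $L(P)\leq L(P')$ and that of $T_j^*$ when $L(P)>L(P')$, and with the Whitney sums carefully organized inside cubes $R\in\mathfrak R_j$ of scale $2^j$ so that the $(s_1,s_2)$-geometric series converges. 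Your proposal makes no use of the decomposition of $g_i$, never uses \ref{adjSingleScalereqcond}, and therefore has no mechanism to make the $j$-sum close.
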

\begin{proof}
    Recall that for $i=1,\dots,n$ we defined
    \begin{align*}
        b_{i,P}\coloneqq(f_i-\fint_P f_i)\mathbbm{1}_P,\qquad b_i\coloneqq\sum_{\substack{P\in\mathcal{W}\\P\subset Q_0}}b_{i,P},
    \end{align*}
    and
    \begin{align*}
        h_i\coloneqq f_i\mathbbm{1}_{\Omega^\complement}+\sum_{\substack{P\in\mathcal{W}\\P\subset Q_0}}\mathbbm{1}_P\fint_P f_i.
    \end{align*}
    We also need an analogous decomposition for $g_i$ so we define $b^*_{i,P}$, $b^*_{i}$ and $h_i^*$ similarly as above, but with $g_i$ instead of $f_i$. Now we have
    \[
        \| f_i\|_{\textit{\L}^p(P,B_1)}\lesssim_{d,\gamma}n^\frac{1}{p}\| f_i\|_{\textit{\L}^p(Q_0,B_1)}, \quad\| g_i\|_{\textit{\L}^{q'}(P,B_2^*)}\lesssim_{d,\gamma}n^\frac{1}{q'}\| g_i\|_{\textit{\L}^{q'}(3Q_0,B_2^*)}.\numberthis\label{arverageDilation}
    \]
    Indeed, since
    \[
        5\operatorname{diam}(P)\leq \operatorname{dist}(P,\Omega^\complement)\leq 12\operatorname{diam}(P),\quad\text{for all } P\in\mathcal{W},
    \]
    we can take a suitable dimensional dilate of $P$, denoted by $P_D$, such that the set $P_D\cap\Omega^\complement$ is non-empty. Then taking $x\in P_D\cap\Omega^\complement$ leads to
    \begin{align*}\| f_i\|_{\textit{\L}^p(P,B_1)}\lesssim_d \| f_i\|_{\textit{\L}^p(P_D,B_1)}&\leq \mathcal{M}_pf_i(x)\leq n^\frac{1}{p}\left(\frac{100^d}{1-\gamma}\right)^\frac{1}{p}\| f_i\|_{\textit{\L}^p(Q_0,B_1)}.\end{align*}
    A similar argument for $g_i$ gives us the other inequality from \eqref{arverageDilation}. Applying this and the definition of $\Omega$, we also get
    \[
        \|h_i\|_{L^\infty_{B_1}}\lesssim_{d,\gamma}n^\frac{1}{p}\| f_i\|_{\textit{\L}^p(Q_0,B_1)}, \quad \|h_i^*\|_{L^\infty_{B_2*}}\lesssim_{d,\gamma}n^\frac{1}{q'}\| g_i\|_{\textit{\L}^{q'}(3Q_0,B_2^*)}.\numberthis\label{goodIsInLinfty}
    \]
    Since $\operatorname{supp}(f_i) \subset Q_0$ and $\operatorname{supp}(g_i)\subset 3Q_0$ we also get $\operatorname{supp}(h_i) \subset Q_0$ and $\operatorname{supp}(h^*_i) \subset 3Q_0$. Since $ \|\mathbbm{1}_Q\|_{L^{r,1}}\lesssim_r|Q|^{1/r}$ for $r < \infty$, we get from \eqref{goodIsInLinfty} that for $r_1, r_2 < \infty$,
    \[                               \|h_i\|_{L^{r_1,1}_{B_1}}\lesssim_{d,r_1,\gamma}n^\frac{1}{p}|Q_0|^{\frac{1}{r_1}}\| f_i\|_{\textit{\L}^p(Q_0,B_1)}\numberthis\label{GoodIsRestrictedStrongType}
    \]
    and
    \[
    \|h_i^*\|_{L^{r_2,1}_{B_2^*}} \lesssim_{d,r_2,\gamma}n^\frac{1}{q'}|Q_0|^{\frac{1}{r_2}}\| g_i\|_{\textit{\L}^{q'}(3Q_0,B_2^*)}.\numberthis\label{GoodIsRestrictedStrongTypeStar}
    \]
    Note that for $r>1$, we have
    \[
        \|b_{i,P}\|_{L^r_{B_1}}\leq 2^{1+\frac{1}{r}} \|f_i\mathbbm{1}_P\|_{L^r_{B_1}}\numberthis\label{singleBadInLp}
    \]
    and thus for every dyadic cube $Q$, by the disjointness of the $P$, we have
    \begin{equation*} \left\|\sum_{P\subset Q}b_{i,P}\right\|_{L^r_{B_1}}\lesssim\left(\sum_{P\subset Q}\|f_i\mathbbm{1}_P\|_{L^r_{B_1}}^r\right)^\frac{1}{r}\leq\left(\int_Q|f_i(x)|^r_{B_1}\intD x\right)^\frac{1}{r}\numberthis\label{badInLp}
    \end{equation*}
    and similarly 
    \[
        \left\|\sum_{P\subset Q}b^*_{i,P}\right\|_{L^r_{B_2^*}}\lesssim\left(\int_Q|g_i(x)|^r_{B_2^*}\intD x\right)^\frac{1}{r}.
    \]
    We decompose 
    \begin{align*}
        |\langle Tf_i&,g_i\rangle-\sum_{P\in\mathcal{W}}\langle S_Pf_i,g_i\rangle|\\&=|\langle Th_i,g_i\rangle- \sum_{P\in\mathcal{W}}\langle S_P\Big[\mathbbm{1}_P\fint_Pf_i\Big],g_i\rangle+\sum_{P\in\mathcal{W}}\langle(T-S_P)b_{i,P},g_i\rangle|\\&\leq |\langle Th_i,g_i\rangle|+| \sum_{P\in\mathcal{W}}\langle S_P\Big[\mathbbm{1}_P\fint_Pf_i\Big],g_i\rangle|+| \sum_{P\in\mathcal{W}}\langle(T-S_P)b_{i,P},g_i\rangle|\\&\eqqcolon |\langle Th_i,g_i\rangle|+|II|+|III|
        .
    \end{align*}
    Using the $L_{B_1}^{q,1}\to L^q_{B_2}$ boundedness of $T$ and \eqref{GoodIsRestrictedStrongType} with $r_1=q<\infty$ we get
    \begin{align*}
        |\langle Th_i,g_i\rangle|&\leq\|Th_i\|_{L^{q}_{B_2}}\|\mathbbm{1}_{3Q_0}g_i\|_{L^{q'}_{B_2^*}} \\
        &\leq A(q) \|h_i\|_{L^{q,1}_{B_2}}\|\mathbbm{1}_{3Q_0}g_i\|_{L^{q'}_{B_2^*}} \\&\lesssim_{d,q,\gamma}A(q) n^\frac{1}{p}\,|Q_0|\,\| f_i\|_{\textit{\L}^p(Q_0,B_1)}\| g_i\|_{\textit{\L}^{q'}(3Q_0,B_2^*)}\\&\leq A(q) n^\frac{1}{p}\,|3Q_0|\,\| f_i\|_{\textit{\L}^p(3Q_0,B_1)}\| g_i\|_{\textit{\L}^{q'}(3Q_0,B_2^*)}
        .
    \end{align*}
    The restricted strong type $(q,q)$ condition implies that $\|S_P\|_{L^{q,1}_{B_1}\to L^{q}_{B_2}}\leq A(q)$, and also 
    \[
        \left\|\mathbbm{1}_P\fint_Pf_i\right\|_{L^{q,1}_{B_1}}\lesssim_q\left|\fint_Pf_i\right|_{B_1}|P|^\frac{1}{q}\leq \| f_i\|_{\textit{\L}^p(P,B_1)}|P|^\frac{1}{q}.
    \]
    Together with the Hölder inequality, \eqref{arverageDilation} and the disjointness of the cubes $P$ we can estimate 
    \begin{align*}            |II|&=|\sum_{P\in\mathcal{W}}\langle S_P\Big[\mathbbm{1}_P\fint_Pf_i\Big],g_i\rangle|\\
    &\leq \sum_{P\in\mathcal{W}}\left\|S_P\Big[\mathbbm{1}_P\fint_Pf_i\Big]\right\|_{L^q_{B_2}}\|g_i\mathbbm{1}_{3P}\|_{L^{q'}_{B_2^*}}
    \\&\lesssim_{d,q} A(q)\sum_{P\in\mathcal{W}}\| f_i\|_{\textit{\L}^p(P,B_1)}|P|^\frac{1}{q}\| g_i\|_{\textit{\L}^{q'}(3P,B_2^*)}|P|^\frac{1}{q'}
    \\&\lesssim_{d,\gamma} A(q)n^{\frac{1}{p}+\frac{1}{q'}}\| f_i\|_{\textit{\L}^p(Q_0,B_1)}\| g_i\|_{\textit{\L}^{q'}(3Q_0,B_2^*)}\sum_{P\in\mathcal{W}}|P|
    \\&\leq A(q)n^{\frac{1}{p}+\frac{1}{q'}}\,|Q_0|\,\| f_i\|_{\textit{\L}^p(Q_0,B_1)}\| g_i\|_{\textit{\L}^{q'}(3Q_0,B_2^*)}\\&\leq A(q)n^{\frac{1}{p}+\frac{1}{q'}}\,|3Q_0|\,\| f_i\|_{\textit{\L}^p(3Q_0,B_1)}\| g_i\|_{\textit{\L}^{q'}(3Q_0,B_2^*)}.
    \end{align*}
    
    Now consider the third quantity $III= \sum_{P\in\mathcal{W}}\langle(T-S_P)b_{i,P},g_i\rangle$. We use $g_i=b_i^*+h_i^*$ and 
    \[
        (T-S_P)b_{i,P}=\sum_jT_jb_{i,P}-\sum_{j\leq L(P)}T_jb_{i,P}=\sum_{j>L(P)}T_jb_{i,P}
    \]
    to split $III=\sums{1}{i}{4}III_i$, where
    \begin{align*}
        III_1&=\langle \sum_{P\in\mathcal{W}}Tb_{i,P},h^*_i\rangle,\\
        III_2&=-\sum_{P\in\mathcal{W}}\langle S_Pb_{i,P},h^*_i\rangle,\\
        III_3&=\sum_{N_1\leq j\leq N_2}\sum_{\substack{P\in\mathcal{W}\\L(P)<j}}\sum_{\substack{P'\in\mathcal{W}\\L(P')\geq j}}\langle T_jb_{i,P},b^*_{i,P'}\rangle
    \end{align*}
and
\begin{align*}
    III_4&=\sum_{N_1\leq j\leq N_2}\sum_{\substack{P\in\mathcal{W}\\L(P)<j}}\sum_{\substack{P'\in\mathcal{W}\\L(P')<j}}\langle T_jb_{i,P} ,b^*_{i,P'}\rangle.
\end{align*}
Recall that for small enough $P$ the functions $f_i$ and $g_i$ will be constants on $P$, which implies that $b_{i,P}=b_{i,P}^*=0$. Thus there are no issues of convergence in the above decomposition.

To estimate $III_1$ we first note that the Hölder inequality of the scalar Lorentz spaces implies that
\[
    |III_1|\leq \int_{\R^d}\Big|T[\sum_{P\in\mathcal{W}}b_{i,P}](x)\Big|_{B_2}|h_i^*(x)|_{B_2^*}\intD x\leq\Big\|T[\sum_{P\in\mathcal{W}}b_{i,P}]\Big\|_{L^{p,\infty}_{B_2}}\|h^*_i\|_{L^{p',1}_{B_2^*}}.
\]
Thus the weak type condition  $(p,p)$ that $T$ maps boundedly from $L^p_{B_1}$ to $L^{p,\infty}_{B_2}$ together with  \eqref{GoodIsRestrictedStrongTypeStar} for $r_2=p'$ and \eqref{badInLp} for $r=p$ yields
\begin{align*}
    |III_1|&\leq \Big\|T[\sum_{P\in\mathcal{W}}b_{i,P}]\Big\|_{L^{p,\infty}_{B_2}}\|h^*_i\|_{L^{p',1}_{B_2^*}}    \\&\lesssim_{d,p,\gamma}A(p)\Big\|\sum_{P\in\mathcal{W}}b_{i,P}\Big\|_{L^{p}_{B_2}}n^\frac{1}{q'}|Q_0|^\frac{1}{p'}\| g_i\|_{\textit{\L}^{q'}(3Q_0,B_2^*)}
    \\&\lesssim A(p)n^\frac{1}{q'}\,|3Q_0|\,\| f_i\|_{\textit{\L}^p(3Q_0,B_1)}\| g_i\|_{\textit{\L}^{q'}(3Q_0,B_2^*)}.
\end{align*}
The weak type $(p,p)$ condition means that also $\|S_P\|_{L^{p}_{B_1}\to L^{p,\infty}_{B_2}}\leq A(p)$. Using this, \eqref{arverageDilation} and \eqref{goodIsInLinfty} we estimate
\begin{align*}
    |III_2|&\leq \sum_{P\in\mathcal{W}}\|S_Pb_{i,P}\|_{L^{p,\infty}_{B_2}}\|h_i^*\mathbbm{1}_{3P}\|_{L^{p',1}_{B_2^*}}
    \\&\leq A(p)\sum_{P\in\mathcal{W}}\|b_{i,P}\|_{L^{p}_{B_1}}\|h_i^*\|_{L^{\infty}_{B_2^*}}\|\mathbbm{1}_{3P}\|_{L^{p',1}_{B_2^*}}
    \\&\lesssim_{d,p} A(p)\sum_{P\in\mathcal{W}}|P|^\frac{1}{p}\| f_i\|_{\textit{\L}^p(P,B_1)}\|h_i^*\|_{L^{\infty}_{B_2^*}}|P|^\frac{1}{p'}
    \\&\lesssim_{d,\gamma} A(p) n^{\frac{1}{p}+\frac{1}{q'}} \,|3Q_0|\, \| f_i\|_{\textit{\L}^p(3Q_0,B_1)}\| g_i\|_{\textit{\L}^{q'}(3Q_0,B_2^*)}.
\end{align*}
In order to estimate the term $III_3$ we use the fact that for all $x\in\R^d$,
\[
    \begin{cases}\langle  T_jb_{i,P},b_{i,P'}^*\rangle\neq 0\\L(P)<j\leq L(P')\end{cases} \Longrightarrow\quad
    j\leq L(P')\leq L(P)+2<j+2.\numberthis\label{WWprimeclose}
\]
The above relation is (4.20) from \cite{beltran_multi-scale_2020} with $f_i$ and $g_i$ in place of $f_1$ and $f_2$ respectively, and their proof holds in our setting as well.

We use \eqref{WWprimeclose}, Hölder inequality, the single scale condition \[||T_j||_{L^p_{B_1}\to L^q_{B_2}}\leq 2^{-jd(\frac{1}{p}-\frac{1}{q})}A_\circ(p,q),\numberthis\label{singleScaleBoundedness}\] \eqref{singleBadInLp} and \eqref{arverageDilation} to estimate 
\begin{align*}
    |III_3|&\leq \sum_{N_1\leq j\leq N_2}\sum_{\substack{P,P'\in\mathcal{W}\\ j\leq L(P')\leq j+2\\ L(P')-2\leq L(P)\leq j}}|\langle T_jb_{i,P},b_{i,P'}^*\rangle|
    \\&\leq A_\circ(p,q) \sum_{N_1\leq j\leq N_2}2^{-jd(\frac{1}{p}-\frac{1}{q})}\sum_{\substack{P,P'\in\mathcal{W}\\ j\leq L(P')\leq j+2\\ L(P')-2\leq L(P)\leq j}}\|b_{i,P}\|_{L^p_{B_1}}\|b^*_{i,P'}\|_{L^{q'}_{B_2^*}}\\
    &\lesssim A_\circ(p,q) \sum_{N_1\leq j\leq N_2}2^{-jd(\frac{1}{p}-\frac{1}{q})}\\&\qquad\qquad\qquad\qquad\sum_{\substack{P,P'\in\mathcal{W}\\ j\leq L(P')\leq j+2\\ L(P')-2\leq L(P)\leq j}}\| f_i\|_{\textit{\L}^p(P,B_1)}|P|^\frac{1}{p}\| g_i\|_{\textit{\L}^{q'}(P,B_2^*)}|P'|^\frac{1}{q'}
    \\
    &\lesssim_{d,\gamma} n^{\frac{1}{p}+\frac{1}{q'}}A_\circ(p,q)\| f_i\|_{\textit{\L}^p(Q_0,B_1)}\| g_i\|_{\textit{\L}^{q'}(3Q_0,B_2^*)}\\ &\qquad\qquad\qquad\qquad\qquad\sum_{N_1\leq j\leq N_2}2^{-jd(\frac{1}{p}-\frac{1}{q})}\sum_{\substack{P,P'\in\mathcal{W}\\ j\leq L(P')\leq j+2\\ L(P')-2\leq L(P)\leq j}}|P|^\frac{1}{p}|P'|^\frac{1}{q'}.
\end{align*}
Since $2^{-jd\frac{1}{p}}\leq2^{-L(P)d\frac{1}{p}}=|P|^{-\frac{1}{p}}$ 
and
$2^{jd\frac{1}{q}} \leq 2^{L(P')d\frac{1}{q}}=|P'|^\frac{1}{q}$,
we get
\begin{align*}
    |III_3|\lesssim_{d,\gamma} n^{\frac{1}{p}+\frac{1}{q'}}A_\circ(p,q)\| f_i\|_{\textit{\L}^p(Q_0,B_1)}\| g_i&\|_{\textit{\L}^{q'}(3Q_0,B_2^*)}\\ &\sum_{N_1\leq j\leq N_2}\sum_{\substack{P'\in\mathcal{W}\\j\leq L(P')\leq j+2}} |P'|.
\end{align*}
Note that in the above double sum any cube $P'$ can appear at most three times and thus by disjointness of the $P'$, it follows that
\[
    |III_3|\lesssim_{d,\gamma}n^{\frac{1}{p}+\frac{1}{q'}}A_\circ(p,q)\,|Q_0|\,\| f_i\|_{\textit{\L}^p(3Q_0,B_1)}\| g_i\|_{\textit{\L}^{q'}(3Q_0,B_2^*)}.
\]

Finally, we consider the term
\begin{align*}
    III_4&=\sum_{N_1\leq j\leq N_2}\sum_{\substack{P,P'\in\mathcal{W}\\L(P)<j\\L(P')<j}}\langle T_jb_{i,P},b^*_{i,P'}\rangle.
\end{align*}
Let $\tilde\kappa>0$ such that
\[\tilde\kappa<\min\{\frac{1}{p},\frac{1}{q'},\kappa\}\]
and let $\ell$ be a positive integer such that\[\ell<\frac{100}{\tilde\kappa}\log_2(2+\frac{B}{A_\circ(p,q)})\leq \ell+1.\numberthis\label{elldef}\]
We split $\mathcal{V}_j=\left]-\infty,j\right[^2\cap\Z^2$ into three regions
\begin{align*}
    \mathcal{V}_{j,1}\coloneqq\{(L_1,L_2)\in \mathcal{V}_j\,&\colon\,j-\ell\leq L_1<j, j-\ell\leq L_2<j\},\\
    \mathcal{V}_{j,2}\coloneqq\{(L_1,L_2)&\in \mathcal{V}_j\setminus\mathcal{V}_{j,1}\,\colon\,L_1\leq L_2\}
\end{align*}
and 
\[\mathcal{V}_{j,3}\coloneqq\{(L_1,L_2)\in \mathcal{V}_j\setminus\mathcal{V}_{j,1}\,\colon\,L_1> L_2\}.\]
Now $III_4=\sums{i}{1}{3}IV_i$ where for $i=1,2,3,$
\[IV_i=\sum_{N_1\leq j\leq N_2}\sum_{\substack{P,P'\in\mathcal{W}\\(L(P),L(P'))\in\mathcal{V}_{j,i}}}\langle T_jb_{i,P},b^*_{i,P'}\rangle.\]

Let $\mathfrak R_j$ be the collection of dyadic subcubes of $Q_0$ of side length $2^j$. To
estimate $IV_1$ we tile $Q_0$ with such cubes and write
\[
    IV_1=\sum_{N_1\leq j\leq N_2}\sum_{R\in\mathfrak{R}_j}\langle\sum_{\substack{P\subset R\\j-\ell\leq L(P)<j}}T_jb_{i,P},\sum_{j-l\leq L(P')<j}b^*_{i,P'}\rangle.
\]
If $3R\cap P'\neq\emptyset$, then $P'$ intersects some dyadic neighbour of $R$ that we denote by $N(R)$. The cubes $N(R)$ and $P'$ are dyadic, which means that either $P'\subset N(R)$ or $N(R)\subset P'$. The latter option is impossible since $L(P')<j$. This shows that \[3R\cap P'\neq\emptyset\quad\Rightarrow \quad P'\subset 3R.\]
Thus by Hölder’s inequality and the single scale $(p, q)$ condition \eqref{singleScaleBoundedness} we now have
\begin{align*}
    |IV_1|&\leq A_\circ(p,q)\sum_{N_1\leq j\leq N_2}\sum_{R\in\mathfrak{R}_j}2^{-jd(\frac{1}{p}-\frac{1}{q})}\\&\qquad\qquad\qquad\qquad\qquad\bigg\|\sum_{\substack{P\subset R\\j-\ell\leq L(P)<j}}b_{i,P}\bigg\|_{L^p_{B_1}}\bigg\|\sum_{\substack{P'\subset3R\\j-\ell\leq L(P')<j}}b^*_{i,P'}\bigg\|_{L^{q'}_{B_2^*}}\\&= A_\circ(p,q)\sum_{N_1\leq j\leq N_2}\sum_{R\in\mathfrak{R}_j}|R|^{-(\frac{1}{p}-\frac{1}{q})}\\&\qquad\qquad\qquad\quad\bigg(\sum_{\substack{P\subset R\\j-\ell\leq L(P)<j}}\|b_{i,P}\|_{L^p_{B_1}}^p\bigg)^\frac{1}{p}\bigg(\sum_{\substack{P'\subset 3R\\j-\ell\leq L(P')<j}}\|b^*_{i,P'}\|_{L^{q'}_{B_2^*}}^{q'}\bigg)^\frac{1}{q'}.
\end{align*}
Using \eqref{singleBadInLp} and \eqref{arverageDilation} the above expression can be bounded by $C_{d,\gamma}A_\circ(p,q)$ times
\begin{align*}
    &n^{\frac{1}{p}+\frac{1}{q'}}
    \sum_{N_1\leq j\leq N_2}\| f_i\|_{\textit{\L}^p(Q_0,B_1)}\| g_i\|_{\textit{\L}^{q'}(3Q_0,B_2^*)}\\&\qquad\qquad\qquad\qquad\sum_{R\in\mathfrak{R}_j}|R|^{-(\frac{1}{p}-\frac{1}{q})}\bigg(\sum_{\substack{P\subset R\\j-\ell\leq L(P)<j}}|P|\bigg)^\frac{1}{p}\bigg(\sum_{\substack{P'\subset 3R\\j-\ell\leq L(P')<j}}|P'|\bigg)^\frac{1}{q'}
    \\&\leq n^{\frac{1}{p}+\frac{1}{q'}}
    \sum_{N_1\leq j\leq N_2}\| f_i\|_{\textit{\L}^p(Q_0,B_1)}\| g_i\|_{\textit{\L}^{q'}(3Q_0,B_2^*)}\\&\qquad\qquad\qquad\qquad\sum_{R\in\mathfrak{R}_j}|R|^{-(\frac{1}{p}-\frac{1}{q})}\bigg(\sum_{\substack{P\subset 3R\\j-\ell\leq L(P)<j}}|P|\bigg)^{\frac{1}{p}-\frac{1}{q}}\bigg(\sum_{\substack{P\subset 3R\\j-\ell\leq L(P)<j}}|P|\bigg).
\end{align*}
Using $p\leq q$ and the disjointness of the $P$ we see that the last expression can be dominated by a dimensional constant times 
\[
    n^{\frac{1}{p}+\frac{1}{q'}}\| f_i\|_{\textit{\L}^p(Q_0,B_1)}\| g_i\|_{\textit{\L}^{q'}(3Q_0,B_2^*)}\sum_{N_1\leq j\leq N_2}\sum_{R\in\mathfrak{R}_j}\sum_{\substack{P\subset 3R\\j-\ell\leq L(P)<j}}|P|.
\]
In the above triple sum any cube $P\in\mathcal{W}$ may appear at most $3^d\ell$ times so we can conclude, using \eqref{elldef}, that
\begin{align*}
    |IV_1|&\lesssim_{d,\gamma} A_\circ(p,q)n^{\frac{1}{p}+\frac{1}{q'}}\ell\,|Q_0|\,\| f_i\|_{\textit{\L}^p(Q_0,B_1)}\| g_i\|_{\textit{\L}^{q'}(3Q_0,B_2^*)}
    \\&\lesssim_{\kappa,p,q}A_\circ(p,q)\log(2+\frac{B}{A_\circ(p,q)})n^{\frac{1}{p}+\frac{1}{q'}}\,|Q_0|\,\| f_i\|_{\textit{\L}^p(Q_0,B_1)}\| g_i\|_{\textit{\L}^{q'}(3Q_0,B_2^*)}.
\end{align*}

For the last two terms we claim that
\[|IV_2|+|IV_3|\lesssim_{d,p,q,\gamma,\kappa}A_\circ(p,q)\,n^{\frac{1}{p}+\frac{1}{q'}}\,|Q_0|\,\| f_i\|_{\textit{\L}^p(Q_0,B_1)}\| g_i\|_{\textit{\L}^{q'}(3Q_0,B_2^*)}.\]
Recall that $\mathbb{E}_kf\coloneqq\sum_{Q}\mathbbm{1}_Q\fint_Qf$, where the sum is taken over dyadic cubes $Q$ with side length $2^{-k}$. Note that with this notation we have
$b_{i,P}=(I-\mathbb{E}_{-L(P)})\mathbbm{1}_P f_i$ and $b^*_{i,P}=(I-\mathbb{E}_{-L(P)})\mathbbm{1}_P g_i$ for $i=1,\dots,n$.
Lemma \ref{BRScor} gives that
\[
    \|T_j(I-\mathbb{E}_{s_1-j})\|_{L^p_{B_1}\to L^q_{B_2}}\lesssim_{p,q,\kappa}B2^{-jd(\frac{1}{p}-\frac{1}{q})}2^{-\tilde\kappa s_1}\numberthis\label{condExpBound}
\]
and
\[
    \|T_j^*(I-\mathbb{E}_{s_2-j})\|_{L^{q'}_{B_2^*}\to L^{p'}_{B_1^*}}\lesssim_{p,q,\kappa}B2^{-jd(\frac{1}{p}-\frac{1}{q})}2^{-\tilde\kappa s_2}.\numberthis\label{adjCondExpBound}
\]

In the $IV_2$ term, we have $L(P)\leq L(P')$ and $L(P)<j-\ell$.
Thus with the same $\mathfrak R_j$ as before we write
\begin{align*}
    IV_2&=\sum_{N_1\leq j\leq N_2}\sum_{\substack{P,P'\in\mathcal{W}\\L(P)\leq L(P')\\L(P)<j-\ell}}\langle T_jb_{i,P},b^*_{i,P'}\rangle\\&=\sum_{N_1\leq j\leq N_2}\sum_{R\in\mathfrak{R}_j}\sums{s_2}{1}{\infty}\sums{s_1}{\max\{s_2,\ell+1\}}{\infty}\langle \sum_{\substack{P\subset R\\L(P)=j-s_1}}T_jb_{i,P},\sum_{\substack{P'\subset 3R\\L(P')=j-s_2}}b^*_{i,P'}\rangle.
\end{align*}
 
Note that for $L(P)=j-s_1$, we have $b_{i,P}=(I-\mathbb E_{s_1-j})\mathbbm 1_Pf_i
$. By Hölder's inequality and \eqref{condExpBound} we get for $R\in\mathfrak R_j$,
\begin{align*}
   |\langle \sum_{\substack{W\subset R\\L(P)=j-s_1}}T_j&b_{i,P},\sum_{\substack{P'\subset 3R\\L(P')=j-s_2}}b^*_{i,P'}\rangle|\\&\leq  \|\sum_{\substack{P\subset R\\L(P)=j-s_1}}T_jb_{i,P}\Big\|_{L^q_{B_2}}\Big\|\sum_{\substack{P'\subset 3R\\L(P')=j-s_2}}b^*_{i,P'}\Big\|_{L^{q'}_{B_2^*}}\\&\leq \Big\|T_j(I-\mathbb E_{s_1-j})\big[\sum_{\substack{P\subset R\\L(P)=j-s_1}}\mathbbm1_Pf_i\big]\Big\|_{L^q_{B_2}}\Big\|\sum_{\substack{P'\subset 3R\\L(P')=j-s_2}}b_{i,P'}^*\Big\|_{L^{q'}_{B_2^*}}
    \\&\lesssim_{p,q,\kappa} B2^{-\tilde\kappa s_1}|R|^{-(\frac{1}{p}-\frac{1}{q})}\Big(\sum_{\substack{P\subset R\\L(P)=j-s_1}}\|f_i\mathbbm 1_P\|_{L^p_{B_1}}^p\Big)^\frac{1}{p}\\&\qquad\qquad\qquad\qquad\qquad\qquad\qquad\qquad\Big(\sum_{\substack{P'\subset 3R\\L(P')=j-s_2}}\|b_{i,P'}^*\|_{L^{q'}_{B_2^*}}^{q'}\Big)^\frac{1}{q'}.
\end{align*}
Thus interchanging the $j$-sum and the $(s_1, s_2)$-sums yields
\begin{align*}
    |IV_2|\lesssim_{p,q,\kappa} &\sums{s_2}{1}{\infty}\sums{s_1}{\max\{s_2,\ell+1\}}{\infty}B2^{-\tilde\kappa s_1}\sums{j}{N_1}{N_2}\sum_{R\in\mathfrak R_j}|R|^{-(\frac{1}{p}-\frac{1}{q})}\\
    &\qquad\qquad\qquad \Big(\sum_{\substack{P\subset R\\L(P)=j-s_1}}\|f_i\mathbbm 1_P\|_{L^p_{B_1}}^p\Big)^\frac{1}{p}\Big(\sum_{\substack{P'\subset 3R\\L(P')=j-s_2}}\|b_{i,P'}^*\|_{L^{q'}_{B_2^*}}^{q'}\Big)^\frac{1}{q'}
\end{align*}
and the right-hand side is bounded by $C_{d,\gamma}$ times
\[
    n^{\frac{1}{p}+\frac{1}{q'}}\| f_i\|_{\textit{\L}^p(3Q_0,B_1)}\| g_i\|_{\textit{\L}^{q'}(3Q_0,B_2^*)}\sums{s_2}{1}{\infty}\sums{s_1}{\max\{s_2,\ell+1\}}{\infty}B2^{-\tilde\kappa s_1}\sums{j}{N_1}{N_2}\sum_{R\in\mathfrak R_{j}}\,\Gamma(R,j),
\]
where
\[\Gamma(R,j)\coloneqq |R|^{-(\frac{1}{p}-\frac{1}{q})}\Big(\sum_{\substack{P\subset R\\L(P)=j-s_1}}|P|\Big)^\frac{1}{p}\Big(\sum_{\substack{P'\subset 3R\\L(P')=j-s_2}}|P'|\Big)^\frac{1}{q'}.\]
Using $p\leq q$ we get a crude bound
\begin{align*}
    \Gamma(R,j)&\leq |R|^{-(\frac{1}{p}-\frac{1}{q})}\Big(\sums{\nu}{j-s_1}{j-s_2}\sum_{\substack{P\subset3R\\L(P)=\nu}}|P|\Big)^{\frac{1}{p}+1-\frac{1}{q}}
    \\&\leq |R|^{-(\frac{1}{p}-\frac{1}{q})}|3R|^{\frac{1}{p}-\frac{1}{q}}\sums{\nu}{j-s_1}{j-s_2}\sum_{\substack{P\subset3R\\L(P)=\nu}}|P|
    \\&=3^{d(\frac{1}{p}-\frac{1}{q})}\sum_{\substack{P\subset3R\\j-s_1\leq L(P)\leq j-s_2}}|P|.
\end{align*}
For fixed $P \in \mathcal{W}$ consider the set of all pairs $(R, j)$ such that $R \in \mathfrak R_{j}, P \subset 3R$ and $j-s_1\leq
L(P) \leq j - s_2$, and observe that the cardinality of this set is bounded above by $3^d(s_1 - s_2 + 1)$. Thus any $P\in\mathcal{W}$ can appear in the sum
\[
    \sums{j}{N_1}{N_2}\sum_{R\in\mathfrak R_{j}}\sum_{\substack{P\subset3R\\j-s_1\leq L(P)\leq j-s_2}}|P|
\]
at most $3^d(s_1 - s_2 + 1)$ times. Combining this
with the above estimates we obtain the bound
\begin{align*}|IV_2|\lesssim_{d,p,g,\gamma,\kappa}\| f_i\|_{\textit{\L}^p(3Q_0,B_1)}\| g_i\|_{\textit{\L}^{q'}(3Q_0,B_2^*)}&|Q_0|\\\sums{s_2}{1}{\infty}&\sums{s_1}{\max\{s_2,\ell+1\}}{\infty}B2^{-\tilde\kappa s_1}(s_1-s_2+1)\end{align*}
and for the double sum we have
\begin{align*}
     \sums{s_2}{1}{\ell}\sums{s_1}{\ell+1}{\infty}B2^{-\tilde\kappa s_1}(s_1-s_2+1)&+\sums{s_2}{\ell+1}{\infty}\sums{s_1}{s_2}{\infty}B2^{-\tilde\kappa s_1}(s_1-s_2+1)\\
     &\lesssim_{\kappa,p,q} \sums{s_2}{1}{\ell}B2^{-\tilde\kappa\ell}\ell + \sums{s_2}{\ell}{\infty}B2^{-\tilde\kappa s_2}s_2
     \\&\lesssim_{\kappa,p,q} B2^{-\tilde\kappa\ell}\ell^2\lesssim_{\kappa,p,q} B2^{-\frac{\tilde\kappa\ell}{2}} \lesssim_{\kappa,p,q}A_\circ(p,q).
\end{align*}
Above we used the fact that 
$\sums{k}{1}{m}kx^k=\frac{mx^{m+2}+x-mx^{m+1}-x^{m+1}}{(1-x)^2}$ holds for $m\in\N$ and $x\in\R\setminus\{1\}$. In last line we also used \eqref{elldef} and the fact that $\log y\lesssim_ay^a$ for $a,y>0$.

In the last term $IV_3$ we have $L(P)>L(P')$ and $L(P')<j-\ell$. Thus we may write 
\begin{align*}
    IV_3&=\sum_{N_1\leq j\leq N_2}\sum_{R\in\mathfrak{R}_j}\sums{s_1}{1}{\infty}\sums{s_2}{\max\{s_1+1,\ell+1\}}{\infty}\langle \sum_{\substack{P\subset R\\L(P)=j-s_1}}T_jb_{i,P},\sum_{\substack{P'\subset 3R\\L(P')=j-s_2}}b^*_{i,P'}\rangle\\&=\sum_{N_1\leq j\leq N_2}\sum_{R\in\mathfrak{R}_j}\sums{s_1}{1}{\infty}\sums{s_2}{\max\{s_1+1,\ell+1\}}{\infty}\langle \sum_{\substack{P\subset R\\L(P)=j-s_1}}b_{i,P},\sum_{\substack{P'\subset 3R\\L(P')=j-s_2}}T_j^*b^*_{i,P'}\rangle.
\end{align*}
For $R\in\mathfrak R_j$, by Hölder's inequality and \eqref{adjCondExpBound} we get
\begin{align*}
    &|\langle\sum_{\substack{P\subset R\\L(P)=j-s_1}}b_{i,P},\sum_{\substack{P'\subset 3R\\L(P')=j-s_2}}T_j^*b^*_{i,P'}\rangle|\\
    &\lesssim_{p,q,\kappa} B2^{-\tilde\kappa s_2}|R|^{-(\frac{1}{p}-\frac{1}{q})}\Big(\sum_{\substack{P\subset R\\L(P)=j-s_1}}\|b_{i,P}\|_{L^p_{B_1}}^p\Big)^\frac{1}{p}\Big(\sum_{\substack{P'\subset 3R\\L(P)=j-s_2}}\|g_i\mathbbm 1_{P'}\|_{L^{q'}_{B_2^*}}^{q'}\Big)^\frac{1}{q'}
\end{align*}
and from here on the argument is analogous to the treatment of the term
$IV_2$.
\end{proof}

\newpage


\end{document}